\documentclass[11pt]{article}

\RequirePackage{etex}
\usepackage[T1]{fontenc}
\usepackage[utf8]{inputenx}
\usepackage{lmodern}
\usepackage{textcomp}

\usepackage{babel}

\usepackage{geometry}
\usepackage{amsmath}
\usepackage{amsfonts}
\usepackage{amstext}
\usepackage{amssymb}
\usepackage{amsthm}
\usepackage{mathtools}
\usepackage{amscd}
\usepackage{mathrsfs}
\usepackage{dsfont}
\usepackage{latexsym}
\usepackage{braket}

\usepackage{graphicx}
\usepackage{xcolor}
\usepackage{tikz}
\usepackage{tikz-cd}

\usepackage[shortlabels]{enumitem}
\usepackage{etaremune}

\usepackage{soul}
\usepackage{parskip}
\usepackage{caption}
\usepackage{abstract}
\usepackage{comment}
\usepackage{varwidth}

\usepackage{url}
\usepackage{etoolbox}
\usepackage{makeidx}
\usepackage{totcount}
\usepackage{blindtext}

\definecolor{myurlcolor}{rgb}{0,0,0.4}
\definecolor{mycitecolor}{rgb}{0,0.5,0}
\definecolor{myrefcolor}{rgb}{0.5,0,0}

\setstcolor{blue}

\usepackage[pagebackref,draft=false]{hyperref}
\hypersetup{
  colorlinks,
  linkcolor=myrefcolor,
  citecolor=mycitecolor,
  urlcolor=myurlcolor
}

\usepackage[capitalize]{cleveref}
\usepackage{aliascnt}

\usepackage{autonum}

\newtheoremstyle{maybestyle}
  {12pt}
  {12pt}
  {\itshape}
  {0pt}
  {\normalfont\bfseries}
  {.}
  {6pt}
  {}

\newtheoremstyle{mystyle}
  {12pt}
  {12pt}
  {\normalfont}
  {0pt}
  {\bfseries}
  {.}
  {6pt}
  {}

\usepackage{float}
\usepackage{yhmath}

\theoremstyle{maybestyle}
\newtheorem{proposition}{Proposition}[section]

\newaliascnt{lemma}{proposition}
\newtheorem{lemma}[lemma]{Lemma}
\aliascntresetthe{lemma}

\newaliascnt{corollary}{proposition}

\aliascntresetthe{corollary}

\theoremstyle{mystyle}

\newaliascnt{problem}{proposition}
\newtheorem{problem}[problem]{Problem}
\aliascntresetthe{problem}

\newaliascnt{remark}{proposition}
\newtheorem{remark}[remark]{Remark}
\aliascntresetthe{remark}

\newaliascnt{definition}{proposition}
\newtheorem{definition}[definition]{Definition}
\aliascntresetthe{definition}

\newaliascnt{example}{proposition}
\newtheorem{example}[example]{Example}
\aliascntresetthe{example}

\crefname{proposition}{Proposition}{Propositions}
\crefname{lemma}{Lemma}{Lemmas}
\crefname{corollary}{Corollary}{Corollaries}
\crefname{problem}{Problem}{Problems}
\crefname{remark}{Remark}{Remarks}
\crefname{definition}{Definition}{Definitions}
\crefname{example}{Example}{Examples}

\newtheorem*{proof*}{Proof}

\makeatletter

\newif\ifqedcircenvironment
\newcount\qedcircplaced

\newcommand{\qedcirc}{%
  \begingroup
  \renewcommand{\qedsymbol}{$\circ$}%
  \qed
  \endgroup
}

\newcommand{\placeqedcirc}{%
  \ifnum\qedcircplaced=0
    \popQED
    \global\qedcircplaced=1
  \fi
}

\newcommand{\placeqedcircinlist}{%
  \ifqedcircenvironment
    \ifnum\@listdepth=1
      \placeqedcirc
    \fi
  \fi
}

\newcommand{\enableexampleqed}[1]{%
  \AtBeginEnvironment{#1}{%
    \qedcircenvironmenttrue
    \global\qedcircplaced=0
    \pushQED{\qedcirc}%
  }%
  \AtEndEnvironment{#1}{%
    \placeqedcirc
    \qedcircenvironmentfalse
  }%
}

\AtEndEnvironment{enumerate}{\placeqedcircinlist}
\AtEndEnvironment{itemize}{\placeqedcircinlist}
\AtEndEnvironment{description}{\placeqedcircinlist}

\makeatother

\enableexampleqed{example}
\enableexampleqed{remark}
\enableexampleqed{definition}

\newcommand{\blue}[1]{\textcolor{blue}{#1}}
\newcommand{\red}[1]{\textcolor{red}{#1}}

\newcommand{\dd}{\mathrm{d}}

\DeclareMathOperator{\T}{\mathbf{T}}
\DeclareMathOperator{\id}{id}
\DeclareMathOperator{\ad}{ad}
\DeclareMathOperator{\SU}{SU}
\DeclareMathOperator{\U}{U}

\DeclareMathOperator{\Ad}{\mathrm{Ad}}
\DeclareMathOperator{\Tr}{Tr}

\DeclareMathOperator{\FS}{FS}

\DeclareMathOperator{\Tor}{Tor}

\DeclareMathOperator{\su}{\mathfrak{su}}
\DeclareMathOperator{\CP}{\mathbb{C}\mathbb{P}}

\title{A two-point approach to the inverse problem in information geometry}

\author{
F. M. Ciaglia$^{1,6}$ 
\href{https://orcid.org/0000-0002-8987-1181}{\includegraphics[scale=0.7]{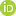}}, 
G. Marmo$^{2,3,7}$ 
\href{https://orcid.org/0000-0003-2662-2193}{\includegraphics[scale=0.7]{ORCID.png}}, 
M. Pacelli$^{2,4,5,8}$ 
\href{https://orcid.org/0009-0008-4437-6970}{\includegraphics[scale=0.7]{ORCID.png}}, \\
L. Schiavone$^{5,9}$ 
\href{https://orcid.org/0000-0002-1817-5752}{\includegraphics[scale=0.7]{ORCID.png}}, 
A. Zampini$^{2,4,5,10}$ 
\href{https://orcid.org/0000-0003-0980-6003}{\includegraphics[scale=0.7]{ORCID.png}}
}

\begin{document}

\maketitle 

\noindent

{\footnotesize{$^{1}$   Universidad Carlos III de Madrid, %ROR: \href{https://ror.org/03ths8210}{03ths8210}, 
Departamento de Matemáticas, %Avenida de la Universidad, 30 (edificio Sabatini), 28911 
Leganés (Madrid), Spain\\
$^{2}$ INFN-Sezione di Napoli, Naples, Italy  \\
$^{3}$ Dipartimento di Fisica ``E. Pancini'', Università degli Studi di Napoli Federico II,  Naples, Italy  \\
$^{4}$ Scuola Superiore Meridionale,  Naples, Italy  \\
$^{5}$ Dipartimento di Matematica e Applicazioni ``R. Caccioppoli'', Università degli Studi di Napoli Federico II, Naples, Italy

\noindent
$^{6}$\texttt{fciaglia[at]math.uc3m.es} \quad 
$^{7}$\texttt{marmo[at]na.infn.it} \quad 
$^{8}$\texttt{marco.pacelli-ssm[at]unina.it}  \\
$^{9}$\texttt{luca.schiavone[at]unina.it}  \quad
$^{10}$\texttt{alessandro.zampini[at]unina.it}}

\begin{abstract}
\footnotesize
We formulate the inverse problem in information geometry within a two-point tensorial framework and solve it for general metric-affine manifolds, without imposing any curvature or torsion constraints. The construction is explicit and starts directly from the given geometric data: the metric tensor is paired to the affine structure, realized through a local parallelism obtained from parallel transport. This yields a contrast bi-form inducing the original metric-affine manifold. The inverse problems for statistical manifolds admitting torsion and for statistical manifolds are then recovered by homotopical reduction. In this way, suitable pre-contrast and contrast functions are obtained, including several established constructions for statistical manifolds and SMATs. We apply the general theory to reductive homogeneous pseudo-Riemannian manifolds endowed with invariant affine connections. Particular attention is devoted to semisimple Lie groups with Cartan-Schouten connections and to odd-dimensional spheres equipped with Berger metrics.
\end{abstract}

{\footnotesize
\tableofcontents
}

\section*{Introduction}
\addcontentsline{toc}{section}{Introduction}

The analysis  of the geometric properties of a smooth manifold $M$ equipped with a so-called metric-affine structure, namely a metric tensor $g$ and an affine connection $\nabla$, originates in several areas of differential geometry and mathematical physics, encompassing classical and quantum information geometry \cite{Jencova-2001, Nagaoka-1995, C-DC-I-M-2023, F-N-2023}, theories of gravity \cite{Hehl-Obukhov-2003}, models of continuous media with topological defects \cite{K-E-1983,Y-G-2012}, and information theory (see \cite{A-N-2000,A-J-L-S-2017}).

What is referred to as the inverse problem on a given metric-affine manifold $(M,g,\nabla)$ is to analyse whether a geometric potential %(i.e. a suitable tensor on $M$) 
exists, such that, together with an algorithm, provides both the metric and the affine connection\footnote{We find it interesting to notice that the quest for a suitable potential is indeed not limited to the metric-affine setting. Consider the case of a K\"ahlerian manifold $(M,g,\omega)$, where both the Hermitian metric and the symplectic form are encoded into a suitable local $\mathbb{C}$-valued potential on $M$ (see, for a modern reference, \cite{Huybrechts2005}), the case of a Lagrangian system on a tangent bundle $\T M$, where the pre-symplectic form is obtained by fiberwise differentiation of an $\mathbb{R}$-valued function (see \cite{M-F-LV-M-R-1990}), or the case of canonical transformations in classical mechanics, which can be described in terms of two-point functions on a phase space (see \cite{C-G-M-M-M-L-2006}).}.

This problem was first studied, under different formulations, in general relativity for Riemannian and pseudo-Riemannian manifolds equipped with their Levi-Civita connection, in terms of suitable two-point functions on $M\times M$.
%and in information geometry for the so called \emph{dually flat statistical manifolds}, where distinguished geometric potentials were constructed directly from the underlying geometric data.} 
In the Riemannian setting, where $g$ is positive definite and $\nabla$ is the corresponding Levi-Civita connection, one half of the square Riemannian distance
\begin{equation}
    \sigma(m,n)=\frac{1}{2}d_g^2(m,n)
\end{equation}
is smooth as a map defined on pairs $(m,n)$ of points on $M$ which are suitably close, and determines (see \cite{KobayashiNomizu1963}) the metric $g$ and therefore its Levi-Civita connection. When $g$ is pseudo-Riemannian, the so-called Synge's world function, defined for suitably close pairs $(m,n)$ of points of $M$ joined by a unique affinely parametrized geodesic $\gamma_{n\leftarrow m}\colon[0,1]\to M$ by
\begin{equation}
\Omega(m,n)=\frac12\int_0^1 g_{\gamma_{n\leftarrow m}(t)}\left(\dot\gamma_{n\leftarrow m}(t),\dot\gamma_{n\leftarrow m}(t)\right)\,\mathrm dt\,,
\end{equation}
is again a two-point function and determines (see \cite{Ruse1931,Synge1931}) the metric $g$.

Within the setting of the so-called dually flat statistical manifolds, namely geometries $(M,g,\nabla)$ such that $\nabla$ and its $g$-dual connection $\nabla^\dagger$ are curvature-free and torsion-free, it is known that this metric-affine structure is generated by the canonical divergence introduced in \cite{NagaokaAmari1982}. In a system of $\nabla$-affine coordinates $(\xi^i)$ and the corresponding dual system of $\nabla^\dagger$-affine coordinates $(\eta_i)$, with associated Legendre-dual potentials $\psi$ and $\varphi$, the canonical divergence is again a two-point function, namely 
\begin{equation}
D(m,n)=\psi\left(\xi(m)\right)+\varphi\left(\eta(n)\right)-\xi^i(m)\,\eta_i(n)\,.
\end{equation}
Through Legendre duality, this expression can equivalently be written as a Bregman divergence and, for regular exponential families of probability distributions, reduces to the Kullback-Leibler relative entropy.

This is the pioneering result %which originates and shapes
that led to the notion of contrast function (see \cite{Eguchi1992,Eguchi1985}) in information geometry, namely suitable smooth real-valued functions on $M\times M$, which %are intended to 
generate a metric $g$ together with a pair of $g$-conjugate connections. When they exist, they provide measures of separation between probability distributions and form the basis of methods of statistical inference \cite{Pardo-2006}, including parameter estimation, hypothesis testing, and model selection. %Their geometric role stems from a construction based on the canonical product structure of $M\times M$, which associates to a contrast function a metric $g$ together with a pair of $g$-conjugate torsion-free affine connections $\nabla$ and $\nabla^\dagger$ \cite{Eguchi1985}. The resulting triple $(M,g,\nabla)$ is called a statistical manifold \cite{Lauritzen-1987}.

The inverse problem for $(M,g,\nabla)$ where both connections $\nabla, \nabla^\dagger$ have vanishing torsion, but there are neither constraints on the curvatures of any of the connections, or metric-compatibility assumptions, was studied and solved by Matumoto in \cite{Matumoto-1993}. His construction is performed in product square charts and yields local solutions that are extended by means of cut-off functions and then glued together via a partition of unity. Since the inverse problem for statistical manifolds admits infinitely many solutions, further analysis has focused on explicit and intrinsic constructions determined directly by the dualistic geometry of the statistical manifold \cite{C-DC-M-2017,C-DC-F-M-M-PP-2017,FeliceAy2018}. Given the paradigmatic role of the self-dual and dually flat cases, a further requirement is that such constructions recover Synge's world function and the canonical divergence, respectively (see, for example, the discussion in \cite[4.4.2]{A-J-L-S-2017}). Among these, we recall two constructions in particular. The first is the contrast function introduced by Ay and Amari \cite{A-A-2015},
\begin{equation}\label{Eq: AA introduction}
D(m,n)=\int_0^1 t\,g_{\gamma_{n\leftarrow m}(t)}\left(\dot\gamma_{n\leftarrow m}(t),
\dot\gamma_{n\leftarrow m}(t)\right)\,\mathrm dt\,.
\end{equation}
defined for pairs $(m,n)$ joined by a unique $\nabla$-geodesic segment $\gamma_{n\leftarrow m}$. The second is the construction of Henmi and Kobayashi \cite{H-R-2000}, involving both conjugate connections,
\begin{equation}\label{Eq: Hook law introduction}
C(m,n)=-\int_0^1 g_{\gamma^\dag_{n\leftarrow m}(t)}\left(\dot\gamma^\dag_{n\leftarrow m}(t),\left.\frac{\partial}{\partial s}\right|_{s=0}H(t,s)\right)\,\mathrm dt\,,
\end{equation}
defined for pairs $(m,n)$ joined by a unique $\nabla^\dag$-geodesic segment $\gamma^\dag_{n\leftarrow m}$ and such that, for every $t\in[0,1]$, there exists a unique $\nabla$-geodesic segment $H(t,\cdot)$ from $\gamma^\dag_{n\leftarrow m}(t)$ to $m$.

The setting analysed above has been extended, in order to accommodate models arising in quantum information geometry \cite{Kurose07,Jencova-2001,C-DC-I-M-2023} and metric-affine theories of gravitation \cite{HehlEtAl1995}, in which one or both of the $g$-conjugate affine connections $\nabla$ and $\nabla^\dagger$ have non-vanishing torsion.
In the case of SMATs, namely triples $(M,g,\nabla)$ for which the $g$-conjugate connection $\nabla^\dagger$ is torsion-free\footnote{In the rest of this paper, we adopt the dual convention and call a triple $(M,g,\nabla)$ a SMAT when $\nabla$ is torsion-free.}, geometric potentials are given by smooth functions $\rho\colon M\times\T M\to\mathbb R$ that are fiberwise linear in the tangent variable, called pre-contrast functions \cite{H-M-2011,H-M-2019}. In the general setting, where no torsion constraint is imposed on either $\nabla$ or $\nabla^\dagger$, the triple $(M,g,\nabla)$ is referred to as a Lauritzen manifold \cite{CMPSZ2026}. Its metric-affine structure can be generated by smooth functions $K\colon\T M\times\T M\to\mathbb R$ that are fiberwise bilinear in the tangent variables, called super-contrast functions \cite{Z-K-2020}.

Explicit solutions to the inverse problem for metric-affine structures with non-vanishing torsion are known only under additional assumptions. In the SMAT setting, Henmi and Matsuzoe considered partially flat SMATs \cite{H-M-2019}, namely SMATs $(M,g,\nabla)$ for which the conjugate connection $\nabla^\dag$ is torsion-free and curvature-free. For such structures, they introduced the pre-contrast function
\begin{equation}\label{Eq: HM}
\rho(m,v;n)=-g_n\left(v,\dot\gamma^\dag_{n\leftarrow m}(0)\right)\,,
\end{equation}
defined for pairs $(m,n)$ joined by a unique $\nabla^\dag$-geodesic segment $\gamma^\dag_{n\leftarrow m}$. Since this expression is defined intrinsically in terms of the metric and the conjugate connection, the authors conjectured that the same formula solves the inverse problem for arbitrary SMATs, without any curvature assumption \cite{H-M-2019}.

In this paper, we address the inverse problem for arbitrary metric-affine structures $(M,g,\nabla)$, without imposing any torsion or curvature assumptions. In our previous papers \cite{L-M-M-V-V-2018, CMPSZ2025, CMPSZ2026}, we showed that there is a unified description of geometric potentials in information geometry in terms of suitable bi-forms of type $(1,1)$, namely bilinear pairings smoothly depending on two points $m,n$ in $M$,
\begin{equation}
\varpi_{(m,n)}\colon\T_mM\times\T_nM\longrightarrow\mathbb R\,,\qquad (m,n)\in M\times M\,.
\end{equation}
The specific cases of SMATs and of statistical manifolds can be studied in terms of  the canonical bicomplex on bi-forms given by 
\begin{align}
&\mathrm d^L\colon\Omega^{p,q}(M\mid M)\longrightarrow\Omega^{p+1,q}(M\mid M)\,,\\
&\mathrm d^R\colon\Omega^{p,q}(M\mid M)\longrightarrow\Omega^{p,q+1}(M\mid M)\,,
\end{align}
where $\Omega^{p,q}(M\mid M)$ denotes the space of bi-forms of type $(p,q)$. More precisely, the connection $\nabla$ and its $g$-conjugate $\nabla^\dag$ induced by a contrast bi-form $\varpi$ are torsion-free precisely when $\mathrm d^L\varpi$ and $\mathrm d^R\varpi$, respectively, vanish along the diagonal submanifold of $M\times M$. Consequently, left-exact contrast bi-forms induce SMAT structures, while bi-exact contrast bi-forms induce statistical structures. The converse does not hold for a fixed contrast bi-form. Nevertheless, up to statistical equivalence, every SMAT is generated by a left-exact contrast bi-form, and every statistical manifold by a bi-exact one. This reconstruction is obtained through statistical homotopy operators
\begin{align}
&J_L\colon\Omega^{p,q}_{\Delta_M}(M\mid M)\longrightarrow\Omega^{p-1,q}_{\Delta_M}(M\mid M)\,,\\
&J_R\colon\Omega^{p,q}_{\Delta_M}(M\mid M)\longrightarrow\Omega^{p,q-1}_{\Delta_M}(M\mid M)\,,
\end{align}
where $\Delta_M\hookrightarrow M\times M$ denotes the diagonal embedding of $M$ into $M\times M$ and $\Omega^{p,q}_{\Delta_M}(M\mid M)$ denotes the space of bi-forms defined on a neighbourhood of the diagonal $\Delta_M$. These operators invert $\mathrm d^L$ and $\mathrm d^R$ in a homotopical sense and reduce the inverse problems for SMATs and statistical manifolds to the inverse problem for arbitrary metric-affine structures. Once a contrast bi-form generating a given Lauritzen manifold $(M,g,\nabla)$ has been constructed, the corresponding pre-contrast and contrast functions are recovered by homotopical inversion through the pair $(J_L,J_R)$.

%\blue{QUESTO CAPOVERSO SEMBRA UN PO' BUTTATO A CASO, VA LEGATO MEGLIO CON IL RESTO E MOTIVATO MEGLIO. TRA L'ALTRO FA PERDERE UN PO' IL FILO DI QUELLO CHE STAI DICENDO, OVVERO "IN THIS PAPER WE...", UN PO' COME FACEVA PRIMA LA SEZIONE SUI PARALLELISMI CHE C'ERA NELLA SEZIONE 2. SECONDO ME QUESTO CAPOVERSO VA RISCRITTO LEGANDOLO MEGLIO E MOTIVATO MEGLIO.} 

%Two-point descriptions of affine connections occur in 
The solution to the inverse problem that we describe in this paper comes upon  encoding the affine connection $\nabla$ into a suitable two-point tensor, developing an idea from relativity \cite{Synge-1960}, optimization on manifolds \cite{A-M-S-2008}, and the analysis of differential operators on vector bundles \cite{B-G-V-1992}. %They are
Such a tensor is given by $\mathbb R$-linear isomorphisms
\begin{equation}
P_{(m,n)}\colon \T_mM\longrightarrow \T_nM
\end{equation}
smoothly depending on pairs $(m,n)$ in a neighbourhood of the diagonal and satisfying $P_{(m,m)}=\id_{\T_mM}$. We refer to such tensors as \underline{parallelisms} or \underline{direct connections} \cite{KubarskiTeleman2006,Teleman2007,AzzaliBoutaibFrabettiPaycha2022}. One way of encoding an affine connection in a parallelism is to turn the curve dependence of its parallel transport into a two-point dependence. This is achieved by choosing smoothly, for every pair of sufficiently close points $(m,n)$, a curve $\Gamma_0(m,n,\cdot)$ joining $m$ to $n$ and reducing to the constant curve when $m=n$. Parallel transport with respect to $\nabla$ along the chosen curves then defines the parallelism
\begin{equation}
P^{\nabla,\Gamma_0}_{(m,n)}\colon \T_mM\longrightarrow \T_nM\,.
\end{equation}
A concrete choice of $\Gamma_0$ is provided by geodesic interpolation with respect to an auxiliary affine connection $\nabla_0$, whose construction follows from the theory of strongly $\nabla_0$-convex neighbourhoods. The resulting parallelism $P^{\nabla,\nabla_0}$ is therefore defined by $\nabla$-parallel transport along the corresponding $\nabla_0$-geodesic segments and determines the affine connection $\nabla$. Pairing $P^{\nabla,\nabla_0}$ to the metric $g$ yields an explicit and intrinsic contrast bi-form that generates the given metric-affine structure $(M,g,\nabla)$ and therefore solves the inverse problem. When $\nabla$ is curvature-free, this construction recovers the solution bi-form obtained in \cite{CMPSZ2026}.

%\blue{CON L'INTERMEZZO FATTO PRIMA SUI PARALLELISMI QUI SI PERDE UN PO' IL FILO SU SE QUESTO "ARE THEN ADDRESSED BY MEANS..." SI RIFERISCA A QUESTO PAPER O AD UN ALTRO. RENDERLO PIU' CHIARO (MA PROBABILMENTE RISTRUTTURANDO BENE IL CAPOVERSO PRECEDENTE SI RISOLVE ANCHE QUESTO)} 

%The inverse problems for SMATs and statistical manifolds are then addressed by means of statistical homotopy operators. 
The contrast bi-forms constructed above yield solutions to the inverse problems for SMATs and statistical manifolds in terms of pre-contrast and contrast functions, respectively, through statistical homotopy operators. Explicit formulas for these geometric potentials are obtained by choosing auxiliary affine connections, which determine explicit realizations of the homotopy operators \cite{CMPSZ2026}: an affine connection $\nabla_1$ determines a left homotopy operator $J_L^{\nabla_1}$, whereas an affine connection $\nabla_2$ determines a right homotopy operator $J_R^{\nabla_2}$. In the SMAT setting, applying $J_L^{\nabla_1}$ to the solution bi-form yields pre-contrast functions depending on the data $(g,\nabla,\nabla_0,\nabla_1)$. For a suitable choice of the auxiliary connections, this construction reproduces the pre-contrast function introduced by Henmi and Matsuzoe for partially flat SMATs and proves their conjecture that the same expression solves the inverse problem for arbitrary SMATs \cite{H-M-2011}. In the statistical manifold setting, applying both homotopy operators yields contrast functions of the form $J_R^{\nabla_2}J_L^{\nabla_1}\varpi$, depending on $(g,\nabla,\nabla_0,\nabla_1,\nabla_2)$. Appropriate choices of the auxiliary connections recover the divergences of Ay and Amari \cite{A-A-2015} and Henmi and Kobayashi \cite{H-R-2000}.

The paper is organized as follows. In \cref{Sec: two-point formalism in IG}, we introduce the two-point formalism used throughout the paper. We recall the classes of metric-affine structures considered in information geometry, develop the bicomplex of bi-forms near the diagonal, and describe contrast bi-forms together with the metric and conjugate connections they induce. We also introduce parallelisms and explain how affine connections can be recovered from them. In \cref{Sec: inverse problem in Information Geometry}, we formulate the inverse problem in the setting of bi-forms. After proving a general existence result, we construct an explicit solution by pairing the given metric to a parallelism obtained from the parallel transport of the given affine connection. Statistical homotopy operators are then used to reduce this solution to pre-contrast functions for SMATs and to contrast functions for statistical manifolds. In \cref{Sec: examples}, we consider reductive homogeneous metric-affine manifolds and derive explicit solution bi-forms for canonical affine connections on reductive spaces, Cartan-Schouten connections on semisimple Lie groups \cite{CartanSchouten, Postnikov-2001}, and Berger metric-affine structures on odd-dimensional spheres \cite{Berger1961, D-G-P-2016}.

\section{Two-point formalism in Information Geometry}\label{Sec: two-point formalism in IG}
In this section we briefly describe the two-point tensor approach to the analysis of potentials in information geometry, introduced in \cite{L-M-M-V-V-2018, CMPSZ2025, CMPSZ2026}. 

\subsection{Statistical manifolds and their generalisations}

Recall the notion of dualistic structure, which underlies the geometry of statistical manifolds and their generalizations. Let $M$ be a smooth manifold endowed with a (possibly %) 
pseudo-)Riemannian metric $g$. A pair of affine connections $(\nabla,\nabla^\dagger)$ on $M$ is called a $g$-\emph{dualistic structure} if
\begin{equation}\label{Eq: compatibility condition}
\mathcal L_Z\left(g(X,Y)\right)=g\left(\nabla_ZX,Y\right)+g\left(X,\nabla^\dagger_ZY\right)\,, \qquad X,Y,Z\in \mathfrak X(M)\,.
\end{equation} 
Since $g$ is non-degenerate, the connection $\nabla^\dagger$ is uniquely determined by $(g,\nabla)$ and is called the $g$-\emph{conjugate connection} to $\nabla$. The above expression shows that the connection $\nabla$ is $g$-conjugate to   $\nabla^\dagger$, so that $(\nabla^\dagger)^\dagger=\nabla$. If $\nabla=\nabla^\dagger$, then $\nabla$ is said to be $g$-\emph{compatible}.

\begin{comment}
The compatibility condition \eqref{Eq: compatibility condition} admits equivalent formulations in terms of covariant derivatives and parallel transports along smooth curves. In particular, if $\gamma\colon I\to M$ is a smooth curve, then
\begin{equation}\label{Eq: compatibility condition covariant derivative}
\frac{\mathrm d}{\mathrm dt}\left(g(E,F)\circ\gamma\right)
=
g\left(\mathrm D_t^{\nabla,\gamma}E,F\right)
+
g\left(E,\mathrm D_t^{\nabla^\dagger,\gamma}F\right)
\end{equation}
for all vector fields $E,F$ along $\gamma$. Equivalently, for every $t_0,t_1\in I$, $v\in\T_{\gamma(t_0)}M$, and $w\in\T_{\gamma(t_1)}M$, one has
\begin{equation}\label{Eq: compatibility condition parallel transport}
g_{\gamma(t_0)}\left(v,P^{\nabla^\dagger,\gamma}_{t_0\leftarrow t_1}w\right)
=
g_{\gamma(t_1)}\left(P^{\nabla,\gamma}_{t_1\leftarrow t_0}v,w\right)\,.
\end{equation}
\end{comment}

The classes of dualistic structures considered in information geometry are distinguished by the torsion properties of the conjugate connections $\nabla, \nabla^\dagger$. Recall that the torsion tensor of an affine connection $\nabla$ is defined by
\begin{equation}\label{Eq: torsion tensor}
\Tor^\nabla(X,Y)=\nabla_XY-\nabla_YX-[X,Y]\,,
\qquad X,Y\in\mathfrak X(M)\,.
\end{equation}
The connection $\nabla$ is called torsion-free if $\Tor^\nabla=0$. Since the torsion tensors of $\nabla$ and $\nabla^\dagger$ may vanish independently, the torsion properties of a pair of  conjugate connections lead to the following classification \cite{Amari-1985, Amari-2016, H-M-2011, Kurose07}.
\begin{definition}\label{Def: metric-affine manifolds}
Let $M$ be a smooth manifold. A triple $(M,g,\nabla)$, where $g$ is a pseudo-Riemannian metric and $\nabla$ is an affine connection, is called a \underline{Lauritzen manifold}, or a \underline{metric-affine manifold}. In particular, it is called
\begin{enumerate}
\item a \underline{statistical manifold admitting torsion}, or simply a \underline{SMAT}, if $\nabla$ is torsion-free;
\item a \underline{statistical manifold} if both $\nabla$ and $\nabla^\dagger$ are torsion-free.
\end{enumerate}
\end{definition}

\begin{remark}\label{Remark: other names}
The terminology for the geometric structure considered above is not uniform. In classical affine differential geometry, the description in terms of a pseudo-Riemannian metric and a pair of conjugate affine connections appears under the name of \emph{Norden-Sen geometry} \cite{Norden1937,Sen1948,Kostecki2026}. In information geometry, the same structure is also referred to as a \emph{dualistic geometry} \cite{Z-K-2020}. The term \emph{metric-affine manifold} is attested in general relativity and gravity \cite{Hehl-Obukhov-2003}.
\end{remark}

\begin{remark}\label{Remark: smat}
The definition of statistical manifold admitting torsion adopted here differs from that given in \cite{H-M-2011, H-M-2019}, where the connection $\nabla^\dagger$ is required to be torsion-free instead of the connection $\nabla$. Since $(\nabla^\dagger)^\dagger=\nabla$, the two conventions are equivalent up to interchanging the roles of $\nabla$ and $\nabla^\dagger$.
\end{remark}

Unlike the  torsion tensor, the curvature tensor is constrained by duality. Recall that the curvature tensor of an affine connection $\nabla$ is defined by
\begin{equation}\label{Eq: curvature tensor}
R^\nabla(X,Y)\,Z =\nabla_X\nabla_YZ-\nabla_Y\nabla_XZ-\nabla_{[X,Y]}Z\,.
\end{equation}
The connection $\nabla$ is called curvature-free if $R^\nabla=0$. The curvature tensor of $\nabla$ and $\nabla^\dag$ satisfy the equality
\begin{equation}\label{Eq: dual curvature relation}
g\left(R^{\nabla^\dagger}(X,Y)Z,W\right)
=
-g\left(Z,R^\nabla(X,Y)W\right)\, \qquad X,Y,Z,W\in\mathfrak X(M)\,,
\end{equation} 
thus showing that  $\nabla$ is curvature-free if and only if $\nabla^\dagger$ is curvature-free.

\begin{definition}\label{Def: dually curvature-free metric-affine manifolds}
A Lauritzen manifold $(M,g,\nabla)$ is called \underline{dually curvature-free} if $\nabla$, or equivalently $\nabla^\dagger$, is curvature-free. In particular, it is called
\begin{enumerate}
\item a \underline{partially flat SMAT} if $\nabla$ is torsion-free and curvature-free;
\item a \underline{dually flat statistical manifold} if both $\nabla$ and $\nabla^\dagger$ are torsion-free and curvature-free.
\end{enumerate}
\end{definition}

As we recalled in the introduction, suitable potentials generate the metric-affine structures introduced above through algorithmic procedures based on the canonical product structure of $M\times M$. %. By a suitable algorithm on tensorial terms depending on a pair of points in $M$, 
Contrast functions generate statistical manifolds, whereas pre-contrast and super-contrast functions generate SMATs and Lauritzen manifolds, respectively. This suggests (\cite{CMPSZ2025,CMPSZ2026}) to analyse information geometry in terms of suitably defined bi-forms.

\subsection{Bi-forms}

Let $M$ be a smooth manifold, let $\pi_L\,\colon\, M\times M\,\to\, M$ %(given by $\pi_L(m,n)=m$) 
and $\pi_R\,\colon\,M\times M\,\to\,M$ %(given by $\pi_R(m,n)=n$) 
denote the canonical surjective submersions onto the left and right component. For $(p,q)\in\mathbb N_0\times\mathbb N_0$, a $(p,q)$-\underline{bi-form} on $M$ is a smooth section of the vector bundle
\begin{equation}
\label{28.07.2}
\bigwedge^p\pi_L^\ast\T^\ast M\otimes_{M\times M}\bigwedge^q\pi_R^\ast\T^\ast M\longrightarrow M\times M\,.
\end{equation}
We denote by $\Omega^{p,q}(M\mid M)$ the $C^\infty(M\times M)$-module of $(p,q)$-bi-forms, i.e. smooth sections of the vector bundle above.
%\red{As it is customary for diff form on manifolds} 
Equivalently, a bi-form $\varpi\in\Omega^{p,q}(M\mid M)$ assigns smoothly to every $(m,n)\in M\times M$ a bilinear pairing
\begin{equation}
\varpi_{(m,n)}\colon \bigwedge^p\T_mM\times\bigwedge^q\T_nM\longrightarrow\mathbb R\,.
\end{equation}
It can therefore be identified with a smooth fiberwise multilinear map
\begin{equation}
\dot\varpi\colon
\underbrace{\T M\times_M\cdots\times_M\T M}_{p\ \mathrm{times}}
\times
\underbrace{\T M\times_M\cdots\times_M\T M}_{q\ \mathrm{times}}
\longrightarrow\mathbb R
\end{equation}
defined by
\begin{equation}
\dot\varpi\left((m,v_1),\ldots,(m,v_p)\mid(n,w_1),\ldots,(n,w_q)\right)
=
\varpi_{(m,n)}\left(v_1,\ldots,v_p\mid w_1,\ldots,w_q\right)\,,
\end{equation}
for every $(m,n)\in M\times M$, $\{v_i\}_{i=1}^p\subset \T_mM$ and $\{w_j\}_{j=1}^q\subset \T_nM$.
Alternatively, a $(p,q)$-bi-form $\varpi$ can be identified with a pairing
\begin{equation}
\label{28.07.3}
\varpi\colon \mathfrak X(M)^p\times\mathfrak X(M)^q\longrightarrow C^\infty(M\times M)\,,
\end{equation}
denoted with the same symbol, which is independently alternating in the left and right entries and satisfies
\begin{align}
\varpi\left(X_1,\ldots,f\,X_i,\ldots,X_p\mid Y_1,\ldots,Y_q\right)&=
\left(\pi_L^\ast f\right)\,
\varpi\left(X_1,\ldots,X_p\mid Y_1,\ldots,Y_q\right)\,,\label{Eq: left tensoriality}\\
\varpi\left(X_1,\ldots,X_p\mid Y_1,\ldots,h\,Y_j,\ldots,Y_q\right)&=
\left(\pi_R^\ast h\right)\,
\varpi\left(X_1,\ldots,X_p\mid Y_1,\ldots,Y_q\right)\,,\label{Eq: right tensoriality}
\end{align}
for every $f,h\in C^\infty(M)$, $i\in\{1,\ldots,p\}$, and $j\in\{1,\ldots,q\}$.
The correspondence is given by
\begin{equation}
\varpi\left(X_1,\ldots,X_p\mid Y_1,\ldots,Y_q\right)(m,n)
=\varpi_{(m,n)}\left((X_1)_m,\ldots,(X_p)_m\mid(Y_1)_n,\ldots,(Y_q)_n\right)\,,
\end{equation}
for every $\{X_i\}_{i=1}^p,\{Y_j\}_{j=1}^q$ in $\mathfrak X(M)$ and $(m,n)\in M\times M$.

A basic class of examples is provided by the (so-called) pure or decomposable bi-forms. Given differential forms $\alpha\in\Omega^p(M)$ and $\beta\in\Omega^q(M)$, their external tensor product is the $(p,q)$-bi-form that we denote as 
\begin{equation}
\label{28.07.1}
\alpha\boxtimes\beta=\pi_L^\ast\alpha\otimes\pi_R^\ast\beta\,.
\end{equation}
Notice that the above set gives a $C^\infty(M\times M)$-bimodule system of generators for $\Omega^{p,q}(M\mid M)$. 

The basic operations on bi-forms that will be used throughout the paper are a diagonal restriction, a duality, the pullback, together with the left and right exterior derivatives, which we describe as follows.

\begin{itemize}
\item 
Let $\iota\colon M\to M\times M$ be the diagonal embedding, i.e. $\iota(m)=(m,m)$. The \emph{diagonal restriction} of a $(p,q)$-bi-form $\varpi$ on $M$ is the $(p+q)$-covariant tensor $\varpi_{\restriction\Delta_M}$ on $M$ defined by
\begin{equation}\label{Eq: diagonal restriction}
\varpi_{\restriction\Delta_M}(X_1,\ldots,X_p,Y_1,\ldots,Y_q)
=
\iota^\ast\left(\varpi(X_1,\ldots,X_p\mid Y_1,\ldots,Y_q)\right)\,.
\end{equation}

\item Let $\dagger\colon M\times M\to M\times M$ be the flip map, $\dagger(m,n)=(n,m)$. The \emph{dual bi-form} of  a $(p,q)$-bi-form $\varpi$ on $M$ is the $(q,p)$-bi-form $\varpi^\dagger$ defined by
\begin{equation}\label{Eq: dual bi-form}
\varpi^\dagger(Y_1,\ldots,Y_q\mid X_1,\ldots,X_p)
=
\dagger^\ast\left(\varpi(X_1,\ldots,X_p\mid Y_1,\ldots,Y_q)\right)\,.
\end{equation}

\item Given a smooth map $F\colon M_0\to M$, the pullback of the $(p,q)$-bi-form $\varpi$ on $M$ is the $(p,q)$-bi-form $F^\ast\varpi$ on $M_0$ defined in terms of the action of the tangent map $F'$ at each point on $M_0$ by
\begin{align}
\left(F^\ast\varpi\right)_{(m_0,n_0)}
\left(v_1,\ldots,v_p\mid w_1,\ldots,w_q\right)=\varpi_{\left(F(m_0),F(n_0)\right)}
\left(F'_{m_0}v_1,\ldots,F'_{m_0}v_p\mid F'_{n_0}w_1,\ldots,F'_{n_0}w_q\right)\,,
\label{Eq: pullback bi-form}
\end{align}
for every $(m_0,n_0)\in M_0\times M_0$, $\{v_i\}_{i=1}^p\subset \T_{m_0}M_0$, and $\{w_j\}_{j=1}^q\subset \T_{n_0}M_0$.

\item The set $\Omega^{p,q}(M\mid M)$ has a bi-complex structure, with 
\begin{align}
&\mathrm d^L\colon\Omega^{p,q}(M\mid M)\longrightarrow\Omega^{p+1,q}(M\mid M)\,,
\\
&\mathrm d^R\colon\Omega^{p,q}(M\mid M)\longrightarrow\Omega^{p,q+1}(M\mid M)\,,
\end{align}
whose actions, upon $\varpi\in\Omega^{p,q}(M\mid M)$, are defined by
\begin{align}
(\mathrm d^L\varpi)(X_0,\ldots,X_p\mid Y_1,\ldots,Y_q)
&=
\sum_{i=0}^p(-1)^i\mathcal L_{X_i^L}
\left(
\varpi(X_0,\ldots,\widehat X_i,\ldots,X_p\mid Y_1,\ldots,Y_q)
\right)\notag\\
&\quad+
\sum_{0\leq i<j\leq p}(-1)^{i+j}
\varpi([X_i,X_j],X_0,\ldots,\widehat X_i,\ldots,\widehat X_j,\ldots,X_p\mid Y_1,\ldots,Y_q)\,,
\label{Eq: left exterior derivative}
\end{align}
and
\begin{align}
(\mathrm d^R\varpi)(X_1,\ldots,X_p\mid Y_0,\ldots,Y_q)
&=
\sum_{i=0}^q(-1)^i\mathcal L_{Y_i^R}
\left(
\varpi(X_1,\ldots,X_p\mid Y_0,\ldots,\widehat Y_i,\ldots,Y_q)
\right)\notag\\
&\quad+
\sum_{0\leq i<j\leq q}(-1)^{i+j}
\varpi(X_1,\ldots,X_p\mid[Y_i,Y_j],Y_0,\ldots,\widehat Y_i,\ldots,\widehat Y_j,\ldots,Y_q)\,.
\label{Eq: right exterior derivative}
\end{align}
Here $X^L$ and $Y^R$ denote the canonical left and right lifts of $X$ and $Y$ to $M\times M$: if $\Phi^X$ and $\Phi^Y$ are the corresponding local flows on $M$, then the local flows of $X^L$ and $Y^R$ on $M\times M$ are given by
\begin{equation}
\Phi_t^{X^L}(m,n)=\left(\Phi_t^X(m),n\right)\,,\qquad 
\Phi_t^{Y^R}(m,n)=\left(m,\Phi_t^Y(n)\right)\,,
\end{equation}
whenever the right-hand sides are defined. The left and right exterior derivatives satisfy
\begin{align}
\mathrm d^L\,\mathrm d^L&=0\,,
\\
\mathrm d^R\,\mathrm d^R&=0\,,
\\
\mathrm d^L\,\mathrm d^R&=\mathrm d^R\,\mathrm d^L\,.
\end{align} 
\begin{comment}
\item \blue{LA COSA PIU' PULITA DAL PUNTO DI VISTA LOGICO SAREBBE INTRODURRE GLI OPERATORI DI OMOTOPIA QUI, COME UN ALTRO PUNTO DI QUESTO ELENCO. SECONDO ME CI SONO DUE STRADE PERCORRIBILI: 1) (QUELLA CHE PREFERISCO) SI AGGIUNGE UN PUNTO A QUESTO ELENCO DEFINENDO $J_L$ E $J_R$ COME MAPPE DA $\Omega^{(p,q)}$ a $\Omega^{(p-1,q)}$ (RESP. $\Omega^{(p,q-1)}$) "INVERSE" DEI DIFFERENZIALI DESTRI E SINISTRI PROPRIO COME FAI NELLA SEZIONE 1.2.2 (PRATICAMENTE SIGNIFICHEREBBE CHE IL CONTENUTO DELLA SEZIONE 1.2.2 DIVENTA UN PUNTO DI QUESTO ELENCO PUNTATO; NON CREDO CHE QUESTO PUNTO VERREBBE TROPPO PIU' LUNGO DI QUELLO SU $d^L$ E $d^R$) E SI SPOSTA LA 1.2.1 COME APPENDICE; 2) SI LASCIANO LE 1.2.1 E 1.2.2 COME STANNO, MA SI AGGIUNGE COMUNQUE UN PUNTO A QUESTO ELENCO PUNTATO IN CUI ALMENO SI NOMINANO $J_L$ E $J_R$ COME "INVERSI" DEI DIFFERENZIALI DESTRI E SINISTRI, DICENDO CHE LI DEFINIAMO E COSTRUIAMO SOTTO DOPO AVER INTRODOTTO GLI STRONGLY CONVEX COVERINGS. RIPETO, PERSONALMENTE PREFERISCO LA PRIMA, E' PIU' PULITA E COMUNQUE NON COSTA NIENTE, MA SE ENTRAMBI PREFERITE LA SECONDA MI ACCONTENTO ANCHE DELLA SECONDA.}
\end{comment}
\end{itemize}
It follows from \eqref{Eq: left tensoriality}-\eqref{Eq: right tensoriality} that all the preceding operations restrict to the space %\blue{(CAMBIARE ESPRESSIONE. ESEMPIO: ALL THESE OPERATIONS ARE GLOBALLY WELL DEFINED ON $M\times M$ AND, THUSE, MAY BE (OBVIOUSLY) ALSO LOCALLY DEFINED AROUND $\Delta_M$. WE WILL REFER TO THE SPACE OF $(p,q)$-BI-FORMS DEFINED ON AN OPEN NEIGHBORHOOD OF $\Delta_M$ AS LOCAL BI-FORMS. ON LOCAL BI-FORMS THE DIFFERENTIAL OPERATORS $d_L$ AND $d_R$ CAN BE INVERTED IN AN HOMOTOPICAL SENSE BY MEANS OF A SUITABLE SYSTEM OF HOMOTOPY OPERATORS, I.E. A PAIR...)} \red{to the space
\begin{equation}
\Omega_{\Delta_M}^{p,q}(M\mid M)
\end{equation}
of $(p,q)$-bi-forms defined on an open neighbourhood of $\Delta_M$ in $M\times M$, which we call \underline{local bi-forms}.

On this space we may introduce left and right homotopy operators. A \underline{system of homotopy operators} for $(\mathrm d^L,\mathrm d^R)$ is a pair $(J_L,J_R)$ of families of $\mathbb R$-linear maps
\begin{align}
    &J_L^{p,q}\colon \Omega^{p,q}_{\Delta_M}(M\mid M)\to \Omega^{p-1,q}_{\Delta_M}(M\mid M)\,, 
    \\
    &J_R^{p,q}\colon \Omega^{p,q}_{\Delta_M}(M\mid M)\to \Omega^{p,q-1}_{\Delta_M}(M\mid M)\,,
\end{align}
such that
\begin{align}
    &J_L^{p-1,q}J_L^{p,q}=0\,,
    \\
    &J_R^{p,q-1}J_R^{p,q}=0\,,
\end{align}
and such that for every $\varpi\in \Omega^{p,q}_{\Delta_M}(M\mid M)$ there exists an open neighbourhood $\mathscr U$ around $\Delta_M$ in $M\times M$, contained in the domain of $\varpi$, on which the following homotopy identities hold:
\begin{equation}\label{Eq: homotopy formula bi-forms L}
\begin{cases}
\begin{aligned}
\mathrm d^L\,J_L^{p,q}\,\varpi+J_L^{p+1,q}\,\mathrm d^L \varpi
&= \varpi\,,
&& (p,q)\in \mathbb N\times \mathbb N_0\,,\\[4pt]
J_L^{1,q}\,\mathrm d^L \varpi
&= \varpi - 1\boxtimes \varpi_{\restriction \Delta_M}\,,
&& (p,q)=(0,q)\,,
\end{aligned}
\end{cases}
\end{equation}
and
\begin{equation}\label{Eq: homotopy formula bi-forms R}
\begin{cases}
\begin{aligned}
\mathrm d^R\,J_R^{p,q}\,\varpi+J_R^{p,q+1}\,\mathrm d^R \varpi
&= \varpi\,,
&& (p,q)\in \mathbb N_0\times \mathbb N\,,\\[4pt]
J_R^{p,1}\,\mathrm d^R \varpi
&= \varpi - \varpi_{\restriction \Delta_M}\boxtimes 1\,,
&& (p,q)=(p,0)\,.
\end{aligned}
\end{cases}
\end{equation}

The homotopy operators introduced above can be constructed explicitly in terms of an auxiliary affine connection and the corresponding smooth geodesic interpolation between sufficiently close points of $M$ within strongly convex neighbourhoods. The same local geometric construction will also be used later to obtain explicit solution bi-forms for the inverse problem. We therefore think it is useful to recall the main properties of strongly convex neighbourhoods, which provide the appropriate domains for both constructions.

\subsubsection{%A digression: s
Strongly convex coverings}\label{Section: strongly convex coverings}

\begin{comment}
{\tt Let $M$ be a smooth manifold endowed with an affine connection $\nabla$. Denote by $\exp^\nabla$ the exponential map corresponding to $\nabla$, and by $\mathscr V_0\subseteq \T M$ its domain. Recall that $(m,v)\in\mathscr V_0$ if and only if the unique $\nabla$-geodesic $\gamma_{(m,v)}$ with initial conditions $\gamma_{(m,v)}(0)=m$ and $\dot\gamma_{(m,v)}(0)=v$ is defined on an open neighbourhood of $[0,1]\subset\mathbb R$, in which case one has 
\begin{equation}
    \exp^\nabla(m,v)=\gamma_{(m,v)}(1)\,.
\end{equation}

 Let $c$ be a point in $M$, and  $U$  an open neighbourhood of $c$ in $M$. We say that $U$ is $\nabla$-\emph{normal} at $c$ if there exists a star-shaped open subset $V\subseteq \T_cM$ containing the zero tangent vector $\mathbf 0_c\in \T_cM$ such that:
\begin{equation}
    U=\exp^\nabla_c(V)\,,
\end{equation}
where $\exp^\nabla_c$ denotes the restriction of $\exp^\nabla$ to 
$\mathscr V_c=\mathscr V_0\cap(\{c\}\times \T_cM)$. 
Equivalently, $U$ is $\nabla$-normal at $c$ if and only if for every 
$n\in U$ there exists a unique $\nabla$-geodesic segment within $U$, denoted 
$\gamma_{n\leftarrow c}^U$, joining $c$ to $n$, that is
\begin{equation}
\gamma_{n\leftarrow c}^U(0)=c,
\qquad
\gamma_{n\leftarrow c}^U(1)=n.
\end{equation}
We say that an open subset $C\subseteq M$ is $\nabla$-\emph{convex} if it is 
$\nabla$-normal at each of its points. In particular, $C$ is $\nabla$-convex if and only if for every $m,n\in C$ there exists a unique $\nabla$-geodesic segment within $C$ joining $m$ with $n$.
}
\end{comment}

Consider an affine connection $\nabla$ on a smooth manifold $M$. %For a given point $m\in M$, consider a curve $\gamma\,\colon\,t\in\,I\,\mapsto\,\gamma(t)\in M$ (with an open interval $I\subseteq\mathbb R$, such that $\gamma(0)=m$) and $\dot\gamma\in\mathbf T_{\gamma(t)}M$. 
A smooth curve $\gamma\colon I\to M$, defined on an open interval $I\subseteq \mathbb R$ with $0\in I$, is called a \underline{$\nabla$-geodesic} through $m=\gamma(0)$ if the velocity vector field $\dot \gamma$\footnote{By the velocity field $\dot\gamma$ of a smooth curve $\gamma\colon I\to M$, we mean the vector field along $\gamma$ defined by $\dot\gamma(t)=\gamma'(t)\in\T_{\gamma(t)}M$ for every $t\in I$.} is $\nabla$-parallel along $\gamma$.
It is well known %that the curve $\gamma$ is defined to be a  \underline{$\nabla$-geodesic} through $m$ if $\nabla_{\dot\gamma}\dot\gamma=0$ for $t\in I$, and that, for fixed $m\in M$ and $v\in\mathbf T_mM$, there exists an open interval $I$ such that a geodesic curve $\gamma$ is well defined on $I$, 
that for every $m\in M$ and $v\in\T_mM$, there exists an open interval $I\subseteq \mathbb R$ containing $0$ and a unique $\nabla$-geodesic $\gamma\colon I\to M$ satisfying %and satisfies 
$\gamma(0)=m$ and $\dot\gamma(0)=v$. 

An affine connection $\nabla$ on a smooth manifold $M$ allows, via geodesics, to have a correspondence between sufficiently close points. Namely, there exist an open neighbourhood $\mathscr U$ around $\Delta_M$ in $M\times M$ and a smooth map
\begin{equation}
\Gamma\colon\mathscr U\times[0,1]\longrightarrow M
\end{equation}
such that, for every $(m,n)\in\mathscr U$, the curve $\Gamma(m,n,\cdot)(t)=\Gamma(m,n,t)$ is a $\nabla$-geodesic segment joining $m$ to $n$.%, with the germ of $\Gamma$ along $\Delta_M\times [0,1]$ depending on $\nabla$ only. %Moreover, any two such maps coincide on a neighbourhood of $\Delta_M\times[0,1]$ and hence determine the same germ along this subset.
%In particular, we define an open subset $C\subseteq M$ to be  $\nabla$-convex if, for any $m,n\in C$, there exists a unique $\nabla$-geodesic segment within $C$ joining $m$ to $n$.

To construct this correspondence, recall that an open subset $C\subseteq M$ is $\nabla$-convex if, for any $m,n\in C$, there exists a unique $\nabla$-geodesic segment within $C$ joining $m$ to $n$. In particular, a partial geodesic interpolation is associated to any $\nabla$-convex open subset $C\subseteq M$:%and a partial geodesic interpolation 
\begin{equation}
\Gamma^C\colon C\times C\times[0,1]\to C\,,\qquad \Gamma^C(m,n,t)=\gamma^C_{n\leftarrow m}(t)\,,
\end{equation}
where $\gamma^C_{n\leftarrow m}$ is the unique $\nabla$-geodesic segment contained in $C$ and joining $m$ to $n$.
%\blue{FORSE PRIMA DI QUESTA FRASE AGGIUNGEREI LA DEFINIZIONE DI $\nabla_0$-CONVEXITY IN UN PAIO DI RIGHE} 
Since $\nabla$-convexity is not, in general, preserved under intersections, the partial interpolation maps associated to an arbitrary covering by $\nabla$-convex open subsets need not agree on intersections. The following example illustrates this phenomenon.

\begin{example}\label{Example: unit circle}
Consider the unit circle $S^1\subseteq \mathbb C$ endowed with the Levi-Civita connection $\nabla$ given by the round metric. Let $C_1, C_2 \subset S^1$ be two connected open arcs such that $C_1 \cap C_2$ has two connected components. If $z,w$ are %(not coincident)
points which belong to different components in the intersection $C_1 \cap C_2$, there %might exist
are two distinct $\nabla$-geodesic segments joining them, depending on whether such geodesics are in $C_1$ or in $C_2$, as illustrated in \cref{Fig: unit circle}. In such a case, the corresponding geodesic interpolations do not agree on $(C_1 \cap C_2)\times (C_1 \cap C_2)\times [0,1]$.

\begin{figure}[H]
\centering
\resizebox{0.3\textwidth}{!}{%
\begin{tikzpicture}[scale=1.2]
    % radii
    \def\r{1}
    \def\rone{1.18}
    \def\rtwo{1.36}

    % angles
    \def\thetaA{70}
    \def\thetaB{290}

    % base circle
    \draw (0,0) circle (\r);
    \node at (135:0.78) {$S^1$};

    % C1
    \draw[dashed, thick, blue]
        (220:\rone) arc[start angle=220,end angle=500,radius=\rone];
    \node[blue, right] at (0:1.2) {$C_1$};

    % C2
    \draw[dashed, thick, red]
        (40:\rtwo) arc[start angle=40,end angle=320,radius=\rtwo];
    \node[red, left] at (180:1.42) {$C_2$};

    % geodesics
    \draw[ultra thick, blue, -{Latex[length=2mm]}]
        (\thetaA:\rone) arc[start angle=430,end angle=\thetaB,radius=\rone];

    \draw[ultra thick, red, -{Latex[length=2mm]}]
        (\thetaA:\rtwo) arc[start angle=\thetaA,end angle=\thetaB,radius=\rtwo];

    % points
    \fill[blue] ({\rone*cos(\thetaA)},{\rone*sin(\thetaA)}) circle (0.9pt);
    \fill[blue] ({\rone*cos(\thetaB)},{\rone*sin(\thetaB)}) circle (0.9pt);

    \fill[red] ({\rtwo*cos(\thetaA)},{\rtwo*sin(\thetaA)}) circle (0.9pt);
    \fill[red] ({\rtwo*cos(\thetaB)},{\rtwo*sin(\thetaB)}) circle (0.9pt);

    \fill ({\r*cos(\thetaA)},{\r*sin(\thetaA)}) circle (0.9pt);
    \fill ({\r*cos(\thetaB)},{\r*sin(\thetaB)}) circle (0.9pt);

    \node[below] at ({cos(\thetaA)},{sin(\thetaA)}) {$z$};
    \node[above] at ({cos(\thetaB)},{sin(\thetaB)}) {$w$};

    % guides
    \draw[dashed,gray]
        ({\r*cos(\thetaA)},{\r*sin(\thetaA)}) --({\rtwo*cos(\thetaA)},{\rtwo*sin(\thetaA)});
    \draw[dashed,gray]
        ({\r*cos(\thetaB)},{\r*sin(\thetaB)}) --({\rtwo*cos(\thetaB)},{\rtwo*sin(\thetaB)});
\end{tikzpicture}}
\caption{The geodesic segments joining $z$ and $w$ relative to the convex sets $C_1$ and $C_2$.}
\label{Fig: unit circle}
\end{figure}

\end{example}

This obstruction is removed by considering only strongly $\nabla$-convex coverings. An open covering $\mathcal C$ of $M$ is called \underline{strongly $\nabla$-convex} if every $C\in\mathcal C$ is $\nabla$-convex and, for any given $C,C'\in\mathcal C$, is the intersection $C\cap C'$  either empty or $\nabla$-convex. What turns out is that every open covering of $M$ admits a strongly $\nabla$-convex refinement (see \cite[Proposition~9 and Remark~12]{Moretti-2021}).

Associated to any strongly %$\nabla_0$
$\nabla$-convex covering $\mathcal C$ is the open neighbourhood
\begin{equation}
\mathscr U_{\mathcal C}=\bigcup_{C\in\mathcal C}\left(C\times C\right)
\end{equation}
around $\Delta_M$ in $M\times M$. An open neighbourhood obtained in this way is called a \underline{strongly $\nabla$-convex neighbourhood}. Such neighbourhoods form a basis of open neighbourhoods around the diagonal submanifold in $M\times M$.

The partial geodesic interpolations associated to a strongly $\nabla$-convex covering $\mathcal C$ glue to a smooth map
\begin{equation}
\Gamma^{\mathcal C}\colon\mathscr U_{\mathcal C}\times[0,1]\longrightarrow M\,,\qquad \Gamma^{\mathcal C}(m,n,t)=\Gamma^C(m,n,t)\,,
\end{equation}
where $C$ is any element of $\mathcal C$ containing both $m$ and $n$. Indeed, if $C'\in\mathcal C$ also contains them, then $C\cap C'$ is $\nabla$-convex, and the geodesic segments joining $m$ to $n$ within $C$ and $C'$ both coincide with the unique segment contained in $C\cap C'$. Moreover, the germ of $\Gamma^{\mathcal C}$ along $\Delta_M\times[0,1]$ depends only on $\nabla$, and not on the particular strongly $\nabla$-convex cover used.

\subsubsection{Homotopy operators on bi-forms}\label{Sec: Homotopy operators on bi-forms}

As a first use of these geodesic interpolations, we construct explicit homotopy operators for the left and right exterior derivatives on local bi-forms.
Let $\nabla_i$, with $i\in\{1,2\}$, be affine connections on $M$, and let
\begin{equation}
\Gamma_i\colon\mathscr U_i\times[0,1]\to M
\end{equation}
be the corresponding $\nabla_i$-geodesic interpolations as in \cref{Section: strongly convex coverings}. We set
\begin{equation}
\gamma^{(i)}_{n\leftarrow m}(t)=\Gamma_i(m,n,t)
\end{equation}
for the $\nabla_i$-geodesic segment from $m$ to $n$.
For every local $(p,q)$-bi-form $\varpi$, the statistical left homotopy operator associated to $\nabla_1$ is defined on $\mathscr U_1$ and given by
\begin{multline}\label{Eq: explicit statistical left homotopy operator}
\left(J_L^{\nabla_1}\varpi\right)(X_2,\ldots,X_p\mid Y_1,\ldots,Y_q)(m,n)\\
=\int_0^1\varpi_{\left(\gamma^{(1)}_{m\leftarrow n}(t),n\right)}\left(\dot{\gamma}^{(1)}_{m\leftarrow n}(t),(X_2)_{\gamma^{(1)}_{m\leftarrow n}(t)},\ldots,(X_p)_{\gamma^{(1)}_{m\leftarrow n}(t)}\mid(Y_1)_n,\ldots,(Y_q)_n\right)\,\mathrm dt\,.
\end{multline}
Similarly, the statistical right homotopy operator associated to $\nabla_2$ is defined on $\mathscr U_2$ and given by
\begin{multline}\label{Eq: explicit statistical right homotopy operator}
\left(J_R^{\nabla_2}\varpi\right)(X_1,\ldots,X_p\mid Y_2,\ldots,Y_q)(m,n)\\
=\int_0^1\varpi_{\left(m,\gamma^{(2)}_{n\leftarrow m}(t)\right)}\left((X_1)_m,\ldots,(X_p)_m\mid\dot{\gamma}^{(2)}_{n\leftarrow m}(t),(Y_2)_{\gamma^{(2)}_{n\leftarrow m}(t)},\ldots,(Y_q)_{\gamma^{(2)}_{n\leftarrow m}(t)}\right)\,\mathrm dt\,.
\end{multline}
After restricting to a sufficiently small common neighbourhood around $\Delta_M$ in $M\times M$, these operators define a system of homotopy operators for $(\mathrm d^L,\mathrm d^R)$.

We can now apply this language to the generation problem of metric-affine structures in information geometry. The next subsection introduces the two-point tensors that generate metrics and affine connections, while encompassing contrast
functions, pre-contrast functions, and super-contrast functions. 

\subsection{Contrast bi-forms}

We say that $\varpi\in \Omega^{1,1}_{\Delta_M}(M\mid M)$ is a \underline{contrast bi-form} if its diagonal restriction
\begin{equation}\label{Eq: metric from bi-forms}
    g^\varpi = \varpi_{\restriction \Delta_M}
\end{equation}
is a pseudo-Riemannian metric on $M$.

\begin{remark}\label{Remark: sign}
    We do not include the conventional minus sign in \eqref{Eq: metric from bi-forms}, which is often adopted in the positive-definite setting; see, for instance, \cite{Eguchi1985,H-M-2011,Z-K-2020}. Our convention is motivated by the fact that we consider also pseudo-Riemannian metrics (see \cref{Sec: CS on Lie groups} and \cref{Sec: Berger metrics}).
\end{remark}

A contrast bi-form also determines an affine connection $\nabla^\varpi$ on $M$ by the identity
\begin{equation}
    g^\varpi\left(\nabla{^\varpi}_Z X,Y\right)
    =
   \iota^\ast\left(\mathcal L_{Z^L}\left(\varpi(X\mid Y)\right)\right)\,, \qquad X,Y,Z\in \mathfrak X(M)\,,
\end{equation}
so providing a Lauritzen manifold %structure 
$(M,g^\varpi,\nabla^\varpi)$.
Contrast bi-forms $\varpi_1$ and $\varpi_2$ are said to be \underline{statistically equivalent} if they generate the same metric-affine structure, namely if one has
\begin{equation}
g^{\varpi_1}=g^{\varpi_2}\,,
\qquad
\nabla^{\varpi_1}=\nabla^{\varpi_2}\,.
\end{equation}
In order to  characterize this property, we recall the notion of vanishing of a covariant tensor along a submanifold. Let  $i\colon S \hookrightarrow Q$, be a smooth injective immersion. We say that an order $p$-covariant tensor $\tau$ on $Q$ \underline{vanishes along} $S$ if $i^*(\tau(X_1,\dots,X_p))=0$ for any set $X_1,\dots,X_p$ of vector fields on $Q$ (notice that such a condition is stronger than requiring $i^*\tau=0$). More generally, for  $k \in \mathbb{N}$, we say that a covariant tensor $\tau$ on $Q$    \underline{vanishes to order $k$ along} $S$ if it vanishes along $S$, and for every vector field $Z$ on $Q$, the Lie derivative $\mathcal{L}_Z \tau$ vanishes to order $(k-1)$ along $S$. This definition can be extended to  pure bi-forms (see \eqref{28.07.1}). We say that the bi-form  $\varpi_{ij}=\alpha_i\boxtimes\beta_j=\pi^*_L\alpha_i\otimes\pi^*_R\beta_j$, with $\alpha_i\in\Omega^p(M)$ and $\beta_j\in\Omega^q(M)$, \underline{vanishes to order $k$ along the diagonal $\Delta_M$} if $\tau_{ij}=\pi^*_L\alpha_i\wedge\pi^*_R\beta_j\in\Omega^{p+q}(M\times M)$ vanishes at order $k$ along the diagonal submanifold $i\colon\Delta_M\hookrightarrow M\times M$. The notion of vanishing of a bi-form along the diagonal $\Delta_M$ comes upon recalling that pure bi-forms provide a %suitable 
system of generators for the module $\Omega^{p,q}(M\mid M)$ and that the above definition can be naturally linearly extended. 
\begin{comment}
Let $\mathcal I_{\Delta_M}\subset\Omega_{\Delta_M}^{0,0}(M\mid M)$ denote the ideal of local functions vanishing on $\Delta_M$. For $k\in\mathbb N_0$, a local $(p,q)$-bi-form $\varpi$ is said to \emph{vanish to order $k$ along $\Delta_M$} if
\begin{equation}
\varpi\in\mathcal I_{\Delta_M}^{k+1}\,\Omega_{\Delta_M}^{p,q}(M\mid M)\,,
\end{equation}
where
\begin{equation}
    \mathcal I_{\Delta_M}^{k+1}=\{F_0\cdots F_k\mid F_j\in \mathcal I_{\Delta_M}\,, j\in \{0,\dots,k\}\}\,.
\end{equation}
For $k=0$, we simply say that $\varpi$ vanishes along $\Delta_M$. With this terminology, $\varpi_1$ and $\varpi_2$ are statistically equivalent if and only if their difference vanishes to first order along $\Delta_M$. Equivalently, in a square product chart $(U\times U,x,y)$ induced by a chart $(U,q)$ on $M$, 
\begin{equation}
\left(\varpi_1-\varpi_2\right)_{\restriction U\times U}=\varpi_{ij,hk}(x,y)\,
\left(x^h-y^h\right)\left(x^k-y^k\right)\,\left(
\mathrm dq^i\boxtimes\mathrm dq^j\right)
\end{equation}
for suitable functions $\varpi_{ij,hk}\in C^\infty(U\times U)$.
\end{comment}

The next result relates the torsion properties of the induced connections to the left and right differential of a contrast bi-form.
\begin{proposition}\label{Prop: torsion bi-form}
Let $\varpi$ be a contrast bi-form on a smooth manifold $M$, and let
$(M,g^\varpi,\nabla^\varpi)$ be the Lauritzen manifold generated by $\varpi$. It is 
\begin{enumerate}[(a)]
    \item $(M,g^\varpi,\nabla^\varpi)$ is a SMAT if and only if
    \begin{equation}
        \big(\mathrm d^L\varpi\big)_{\restriction\Delta_M}=0\,.
    \end{equation}

    \item $(M,g^\varpi,\nabla^\varpi)$ is a statistical manifold if and only if
    \begin{equation}
        \big(\mathrm d^L\varpi\big)_{\restriction\Delta_M}
        =
        \big(\mathrm d^R\varpi\big)_{\restriction\Delta_M}
        =
        0\,.
    \end{equation}
\end{enumerate}
\end{proposition}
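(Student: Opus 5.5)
The plan is to compute the diagonal restrictions of $\mathrm d^L\varpi$ and $\mathrm d^R\varpi$ explicitly. I expect to identify them with the torsion tensors of $\nabla^\varpi$ and of its $g^\varpi$-conjugate $(\nabla^\varpi)^\dagger$, lowered with the metric. Since $g^\varpi$ is non-degenerate, the vanishing statements will then follow at once.

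\textbf{Step 1 (left torsion).} For $X,Z,Y\in\mathfrak X(M)$, formula \eqref{Eq: left exterior derivative} with $p=q=1$ gives
\begin{equation}
(\mathrm d^L\varpi)(X,Z\mid Y)=\mathcal L_{X^L}\big(\varpi(Z\mid Y)\big)-\mathcal L_{Z^L}\big(\varpi(X\mid Y)\big)-\varpi([X,Z]\mid Y)\,.
\end{equation}
Applying $\iota^\ast$ and using the defining identity of $\nabla^\varpi$ together with $\iota^\ast\varpi([X,Z]\mid Y)=g^\varpi([X,Z],Y)$, I obtain
\begin{equation}
\big(\mathrm d^L\varpi\big)_{\restriction\Delta_M}(X,Z,Y)=g^\varpi\big(\Tor^{\nabla^\varpi}(X,Z),Y\big)\,.
\end{equation}
By non-degeneracy of $g^\varpi$, this vanishes identically if and only if $\Tor^{\nabla^\varpi}=0$. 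This proves (a).

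\textbf{Step 2 (right derivative and the conjugate connection).} This step is the main point. I will show
\begin{equation}
g^\varpi\big(X,(\nabla^\varpi)^\dagger_Z Y\big)=\iota^\ast\Big(\mathcal L_{Z^R}\big(\varpi(X\mid Y)\big)\Big)\,.
\end{equation}
The key fact is that $\iota'(Z_m)=Z^L_{(m,m)}+Z^R_{(m,m)}$, since $\iota$ composed with $\pi_L$ and with $\pi_R$ is the identity. By the chain rule, for every $F\in C^\infty(\mathscr U)$ this gives $\mathcal L_Z(\iota^\ast F)=\iota^\ast\big(\mathcal L_{Z^L}F+\mathcal L_{Z^R}F\big)$. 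Taking $F=\varpi(X\mid Y)$ yields
\begin{equation}
\mathcal L_Z\big(g^\varpi(X,Y)\big)=g^\varpi(\nabla^\varpi_ZX,Y)+\iota^\ast\Big(\mathcal L_{Z^R}\big(\varpi(X\mid Y)\big)\Big)\,.
\end{equation}
Comparing with \eqref{Eq: compatibility condition} and using uniqueness of the conjugate connection gives the claim.

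\textbf{Step 3 (right torsion).} I repeat the computation of Step 1 with $\mathrm d^R$ from \eqref{Eq: right exterior derivative}, now using Step 2. This gives
\begin{equation}
\big(\mathrm d^R\varpi\big)_{\restriction\Delta_M}(X,Z,W)=g^\varpi\big(X,\Tor^{(\nabla^\varpi)^\dagger}(Z,W)\big)\,.
\end{equation}
Hence $\big(\mathrm d^R\varpi\big)_{\restriction\Delta_M}$ vanishes if and only if $(\nabla^\varpi)^\dagger$ is torsion-free. Combining this with (a) gives (b), by \cref{Def: metric-affine manifolds}.

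The only delicate point is the chain-rule identity for $\iota'$ in Step 2. Everything else is direct substitution into the definitions.
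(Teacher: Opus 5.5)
Your argument is correct and complete. Step 1 identifies $(\mathrm d^L\varpi)_{\restriction\Delta_M}$ with $g^\varpi\big(\Tor^{\nabla^\varpi}(\cdot,\cdot),\cdot\big)$. Step 2 uses correctly that $Z^L+Z^R$ is $\iota$-related to $Z$, and from this reads off $g^\varpi\big(X,(\nabla^\varpi)^\dagger_ZY\big)=\iota^\ast\big(\mathcal L_{Z^R}(\varpi(X\mid Y))\big)$ directly from \eqref{Eq: compatibility condition}. No appeal to uniqueness is needed there; it is a plain subtraction. Step 3 then produces the conjugate torsion exactly as you state.

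Note that the paper does not prove this proposition here; it is recalled from the authors' earlier work. The only trace of the intended argument is in the proof of \cref{Prop: compatibility test}, which invokes the duality relations $g^{\varpi^\dagger}=g^\varpi$ and $\nabla^{\varpi^\dagger}=(\nabla^\varpi)^\dagger$.

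Along that route one proves only the left statement, which is your Step 1. The right statement then follows formally: the flip exchanges $Z^L$ and $Z^R$ and fixes $\Delta_M$, so $\mathrm d^L(\varpi^\dagger)(Y_0,Y_1\mid X)=\dagger^\ast\big(\mathrm d^R\varpi(X\mid Y_0,Y_1)\big)$. Hence $(\mathrm d^R\varpi)_{\restriction\Delta_M}$ vanishes if and only if $(\mathrm d^L\varpi^\dagger)_{\restriction\Delta_M}$ does, that is, if and only if $\nabla^{\varpi^\dagger}=(\nabla^\varpi)^\dagger$ is torsion-free.

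Your Step 2 is precisely a direct proof of that duality relation for the connections, so the two routes carry the same content. The duality route is more economical and makes the left/right symmetry explicit. Yours is self-contained and gives the explicit identity $(\mathrm d^R\varpi)_{\restriction\Delta_M}(X,Z,W)=g^\varpi\big(X,\Tor^{(\nabla^\varpi)^\dagger}(Z,W)\big)$, without having to check how $\mathrm d^L$ and $\mathrm d^R$ intertwine under the flip.
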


%\cref{Prop: torsion bi-form} suggests a cohomological characterization of contrast bi-forms inducing SMATs and statistical manifolds. Indeed, any left-exact contrast bi-form $\mathrm d^L S$ induces a SMAT, since $\mathrm d^L$ is a cohomological operator. Likewise, any bi-exact contrast bi-form $\mathrm d^L\mathrm d^R D$ induces a statistical manifold by the identities of the bi-complex.

Such a \cref{Prop: torsion bi-form} can be inverted by introducing a suitable family of homotopy operators for $(\mathrm d^L,\mathrm d^R)$. So we define a \underline{system of statistical homotopy operators} %is 
as a system of homotopy operators $(J_L,J_R)$ for $(\mathrm d^L,\mathrm d^R)$ such that:
\begin{itemize}
    \item the element $J_L\varpi$ vanishes along $\Delta_M$: furthermore, if $\varpi$ vanishes to order $k$ along $\Delta_M$, then $J_L\varpi$ vanishes to order $k+1$ along $\Delta_M$;
    \item the element $J_R\varpi$ vanishes along $\Delta_M$: furthermore, if $\varpi$ vanishes to order $k$ along $\Delta_M$, then $J_R\varpi$ vanishes to order $k+1$ along $\Delta_M$.
\end{itemize}
The homotopy operators $(J_L^{\nabla_1},J_R^{\nabla_2})$ associated with any pair of affine connections $\nabla_1,\nabla_2$ on $M$, considered in \cref{Sec: Homotopy operators on bi-forms}, also form a system of statistical homotopy operators.

This allows us to give a cohomological characterization of contrast bi-forms inducing SMATs and statistical manifolds.

\begin{proposition}\label{Thm: integration of a contrast bi-form}
Let $\varpi$ be a contrast bi-form on $M$, and let $(J_L,J_R)$ be a system of statistical homotopy operators.
\begin{enumerate}[(a)]
    \item If $(M,g^\varpi,\nabla^\varpi)$ is a SMAT, then $\varpi$ is statistically equivalent to $\mathrm d^LS$, with
    \begin{equation}
        S=J_L\varpi\,.
    \end{equation}

    \item If $(M,g^\varpi,\nabla^\varpi)$ is a statistical manifold, then $\varpi$ is statistically equivalent to $ \mathrm d^L\mathrm d^R D$, with
    \begin{equation}
       D=J_RJ_L\varpi\,.
    \end{equation}
\end{enumerate}
\end{proposition}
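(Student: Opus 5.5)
The plan is to first identify statistical equivalence with vanishing to first order along $\Delta_M$. If $\varpi_1-\varpi_2$ vanishes to order $1$ along $\Delta_M$, then $g^{\varpi_1}=g^{\varpi_2}$, since the diagonal restriction kills anything vanishing along $\Delta_M$. Also $\nabla^{\varpi_1}=\nabla^{\varpi_2}$, since $\iota^\ast\mathcal L_{Z^L}$ of $(\varpi_1-\varpi_2)(X\mid Y)$ vanishes by the order-one condition. So in each case it suffices to show that $\varpi$ minus the proposed bi-form vanishes to first order along $\Delta_M$. I expect this to be checked directly from the definitions in a square product chart, writing a bi-form vanishing to order $k$ locally as a combination with coefficients in the $(k+1)$-th power of the ideal generated by $x^h-y^h$.

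For (a), apply the homotopy identity \eqref{Eq: homotopy formula bi-forms L} with $(p,q)=(1,1)$ on a neighbourhood of $\Delta_M$:
\begin{equation}
\varpi-\mathrm d^L J_L\varpi = J_L\,\mathrm d^L\varpi\,.
\end{equation}
By \cref{Prop: torsion bi-form}(a), the SMAT hypothesis gives $(\mathrm d^L\varpi)_{\restriction\Delta_M}=0$. The key step is to upgrade this to: $\mathrm d^L\varpi$ vanishes (order $0$) along $\Delta_M$ as a bi-form. This is the main obstacle, because the diagonal restriction is only the pullback-type condition, which is weaker than vanishing along the submanifold.

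To handle it, I would check in coordinates. $\mathrm d^L\varpi$ is a $(2,1)$-bi-form with components $\partial_{x^i}\varpi_{jk}-\partial_{x^j}\varpi_{ik}$ evaluated at $(x,y)$. Its value at $(x,x)$ is exactly the component of $(\mathrm d^L\varpi)_{\restriction\Delta_M}$, because the diagonal restriction as defined in \eqref{Eq: diagonal restriction} evaluates vector fields, not pulled-back forms. So the stronger vanishing in fact holds for bi-forms, as the restriction there is pointwise on fibers. Granting this, the statistical property of $J_L$ shows that $J_L\mathrm d^L\varpi$ vanishes to order $1$. Hence $\varpi$ is statistically equivalent to $\mathrm d^L S$ with $S=J_L\varpi$.

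For (b), start from (a), so that $\varpi\sim\mathrm d^L S$ with $S=J_L\varpi\in\Omega^{0,1}_{\Delta_M}$. Apply \eqref{Eq: homotopy formula bi-forms R} with $(p,q)=(0,1)$:
\begin{equation}
S=\mathrm d^R J_R S+J_R\mathrm d^R S\,.
\end{equation}
Taking $\mathrm d^L$ and using $\mathrm d^L\mathrm d^R=\mathrm d^R\mathrm d^L$ gives $\mathrm d^L S=\mathrm d^L\mathrm d^R D+\mathrm d^L J_R\mathrm d^R S$, with $D=J_RJ_L\varpi$.

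It then remains to show that $\mathrm d^L J_R\mathrm d^R S$ vanishes to first order. It suffices that $J_R\mathrm d^R S$ vanishes to order $2$, since $\mathrm d^L$ lowers the vanishing order by at most one. By the statistical property of $J_R$, that in turn needs $\mathrm d^R S$ to vanish to order $1$.

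The point $\mathrm d^R S$ vanishes along $\Delta_M$ because $S=J_L\varpi$ vanishes along $\Delta_M$ and $\mathrm d^R$ lowers the order by at most one. For first-order vanishing, we need $\mathrm d^L\mathrm d^R S=\mathrm d^R\mathrm d^L S$ to vanish along $\Delta_M$. Since $\mathrm d^L S\sim\varpi$, its $\mathrm d^R$ agrees along $\Delta_M$ with $\mathrm d^R\varpi$ (again, first-order differences only shift the order by one). That restriction vanishes by \cref{Prop: torsion bi-form}(b), with the same pointwise upgrade as before. Combining this with the vanishing of $\mathrm d^R S$ itself and a Taylor argument in the $y-x$ variables gives order-one vanishing. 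This bookkeeping of vanishing orders is the second delicate point, and I would carry it out in product charts.
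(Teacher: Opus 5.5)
Part (a) is fine, and the ``main obstacle'' you flag there is not one: $(\mathrm d^L\varpi)_{\restriction\Delta_M}=0$ is, by definition, pointwise vanishing of $\mathrm d^L\varpi$ on the fibres over $\Delta_M$. The gap is in (b), where you claim that $\mathrm d^RS$ vanishes along $\Delta_M$ ``because $S=J_L\varpi$ vanishes along $\Delta_M$ and $\mathrm d^R$ lowers the order by at most one''. Starting from order $0$, that reasoning gives nothing. For instance, on $\mathbb R^2$ the form $S=(x^1-y^1)\,\mathrm dy^2$ vanishes on $\Delta_M$, but $\mathrm d^RS=-\mathrm dy^1\wedge\mathrm dy^2$ does not. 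This is not a cosmetic slip. Once you have $(\mathrm d^L\varpi)_{\restriction\Delta_M}=(\mathrm d^R\varpi)_{\restriction\Delta_M}=0$, your argument never uses that $g^\varpi$ is symmetric, and it cannot succeed without that. If $D$ vanishes to order one, then $\partial_{x^i}\partial_{y^j}D=-\partial_{y^i}\partial_{y^j}D$ on $\Delta_M$, so $(\mathrm d^L\mathrm d^RD)_{\restriction\Delta_M}$ is always symmetric. Yet for a constant invertible non-symmetric matrix $(A_{ij})$, the bi-form $A_{ij}\,\mathrm dx^i\otimes\mathrm dy^j$ has $\mathrm d^L$ and $\mathrm d^R$ identically zero. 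Your steps would apply to it verbatim and ``prove'' a false conclusion.

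The missing idea is to read off the diagonal behaviour of $\mathrm d^RS$ from that of $\mathrm d^LS$. Since $S$ vanishes on $\Delta_M$ and $Z^L+Z^R$ is tangent to $\Delta_M$, you get $\iota^\ast\mathcal L_{Z^R}(S(\mid Y))=-\iota^\ast\mathcal L_{Z^L}(S(\mid Y))=-(\mathrm d^LS)_{\restriction\Delta_M}(Z,Y)=-g^\varpi(Z,Y)$. The last equality uses $(\mathrm d^LS)_{\restriction\Delta_M}=\varpi_{\restriction\Delta_M}$, which follows from your part (a). Therefore $(\mathrm d^RS)_{\restriction\Delta_M}(Y_0,Y_1)=-g^\varpi(Y_0,Y_1)+g^\varpi(Y_1,Y_0)=0$, with the bracket term dropping because $S$ vanishes on $\Delta_M$. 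This is exactly where the symmetry of the metric enters. With this replacement the rest of your bookkeeping is correct. The form $\mathrm d^L\mathrm d^RS=\mathrm d^R\mathrm d^LS$ agrees on $\Delta_M$ with $\mathrm d^R\varpi$, so it vanishes there. Together with the point above, this gives first-order vanishing of $\mathrm d^RS$. Hence $J_R\mathrm d^RS$ vanishes to second order, and $\mathrm d^LJ_R\mathrm d^RS$ vanishes to first order, as you need.
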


We finally recall how the standard potential functions are recovered from these data through the fiberwise formulation. The fiberwise bilinear counterpart of a contrast bi-form $\varpi$,
\begin{equation}
    \dot\varpi\colon \T M\times \T M\to \mathbb R\,,
\end{equation}
is a super-contrast function in the sense of Khan and Zhang \cite{Z-K-2020}. In the SMAT case, the fiberwise linear counterpart of the left primitive $S=J_L^{1,1}\varpi$, that is 
\begin{equation}
    \dot S\colon M\times \T M\to \mathbb R\,,
\end{equation}
is a pre-contrast function in the sense of Henmi and Matsuzoe \cite{H-M-2011,H-M-2019}, up to the convention fixed in \cref{Remark: smat}. Finally, in the statistical case, the potential $D=J_RJ_L\varpi$ is a local $(0,0)$-bi-form, hence a local two-point function, and recovers the classical contrast-function formalism.

This resum\'e shows how the notion of contrast bi-form (and its cohomological properties) encodes the notion of metric tensor and of an affine connection. In order to approach a solution of the (yet to rigorously define!) inverse problem we turn our attention to analyzing how the notion of affine connection comes out from a suitable two-point tensor. 

%The preceding discussion encompasses the bitensorial approach to information geometry through contrast bi-forms. With a view to solving the inverse problem within this framework, we now investigate how an affine connection can be encoded by a suitable two-point tensor.

\subsection{Parallelisms}\label{Section: parallelisms}

An affine connection determines a way of comparing tangent spaces at different points because, as we now recall, every tangent vector at any point on a curve can be uniquely extended to a parallel vector field along an entire curve through that point \cite{LeeRiem}.

Let $\gamma\colon[t_0,t_1]\to M$ be a smooth curve with $\gamma(t_0)=m$ and $\gamma(t_1)=n$. For every $v\in\T_mM$, there exists a unique $\nabla$-parallel vector field $E_v$ along $\gamma$ satisfying
\begin{equation}
\begin{cases}
\mathrm D_t^{\gamma}E_v=0\,, & \textup{on } [t_0,t_1]\,,\\
E_v(t_0)=v\,,
\end{cases}
\end{equation}
where $\mathrm D_t^{\gamma}$ denotes the covariant derivative induced by $\nabla$ along $\gamma$. This allows us to define the
\underline{$\nabla$-parallel transport} along $\gamma$ from $m$ to $n$, namely the map
 $\mathscr P_{t_1\leftarrow t_0}^{\nabla,\gamma}\colon\T_mM\longrightarrow\T_nM\,$  given by 
\begin{equation}
 \mathscr P_{t_1\leftarrow t_0}^{\nabla,\gamma}(v)= E_v(t_1)\,,
\end{equation}
which is an $\mathbb R$-linear isomorphism. Parallel transport depends in general on the curve $\gamma$; it is compatible with restrictions of the curve and reduces to the identity along constant curves, namely,
\begin{equation}
\mathscr P_{t_1\leftarrow t_0}^{\nabla,\gamma}=\mathscr P_{t_1\leftarrow t}^{\nabla,\gamma}\circ\mathscr P_{t\leftarrow t_0}^{\nabla,\gamma}
\end{equation}
for every $t_0\leq t\leq t_1$, and
\begin{equation}
\mathscr P_{t_1\leftarrow t_0}^{\nabla,\gamma}=\id_{\T_mM}
\end{equation}
whenever $\gamma(t)=m$ for every $t\in[t_0,t_1]$.

Several properties of a connection $\nabla$ are encoded in the properties of the parallel transports that it induces. We limit ourselves here to  recall that the connection itself can be recovered from this family through
\begin{equation}\label{Eq: connection from parallel transport}
\left(\nabla_ZX\right)_m=\left.\frac{\mathrm d}{\mathrm dt}\right|_{t=0}\mathscr P_{0\leftarrow t}^{\nabla,\delta}X_{\delta(t)}\,,
\end{equation}
where $Z$ is a vector field on $M$ such that $Z_m\in\mathbf T_mM$,  $\delta$ is \emph{any} smooth curve satisfying $\delta(0)=m$ and $\dot\delta(0)=Z_m$ \cite[Corollary 4.35]{LeeRiem}. Furthermore, a (local) flatness condition for the corresponding curvature is characterized by the homotopy invariance of parallel transport. More precisely, $\nabla$ has a vanishing curvature if and only if
\begin{equation}\label{Eq: homotopy invariance of parallel transport}
\mathscr P_{t_1\leftarrow t_0}^{\nabla,\gamma_0}=\mathscr P_{t_1\leftarrow t_0}^{\nabla,\gamma_1}
\end{equation}
for any pair of piecewise smooth curves $\gamma_0,\gamma_1\colon[t_0,t_1]\to M$ that are homotopic relative to their endpoints \cite[Problem 7-12]{LeeRiem}.

The notion of \underline{parallelism} generalises the constructions by assigning directly an isomorphism between the tangent spaces at each pair of sufficiently close points, without retaining either the affine connection or a connecting curve as part of the data. Such assignments are described as suitable local sections of the Hom-bundle\footnote{Given vector bundles $E,F\longrightarrow B$, the Hom-bundle is the vector bundle $\operatorname{Hom}(E,F)$ whose fiber at $b\in B$ is the vector space of $\mathbb R$-linear morphisms $E_b\longrightarrow F_b$}
\begin{equation}\label{Eq: Hom bundle}
   \operatorname{Hom}(\pi_L^\ast \T M,\pi_R^\ast \T M)\longrightarrow M\times M\,
\end{equation}
whose fiber at $(m,n)\in M\times M$ is the vector space $\operatorname{Hom}(\T_mM,\T_nM)$ of linear maps $\T_mM\longrightarrow \T_nM$.

\begin{comment}
\begin{definition}
Let $M$ be a smooth manifold, and $\mathscr U$ an open neighbourhood around $\Delta_M$. For $(m,n)\in\mathscr U$, a \underline{parallelism} from $m$ to $n$ is an element $P_{(m,n)}\in\mathrm{Iso}(\mathbf T_mM\to\mathbf T_nM)$ such that, for $m=n$, one has $P_{(m,m)}=\mathrm{id}_{\mathbf T_mM}$.
\end{definition}
\end{comment}

\begin{definition}
Let $M$ be a smooth manifold. A \underline{parallelism} on $M$ is a local section $P$ of the vector bundle \eqref{Eq: Hom bundle} defined on an open neighbourhood $\mathscr U$ of $\Delta_M$ in $M\times M$ such that:
\begin{enumerate}[($a$)]
\item $P_{(m,n)}\colon\T_mM\to\T_nM$ is an $\mathbb R$-linear isomorphism for any $(m,n)\in\mathscr U$.
\item\label{Item: normalization} $P_{(m,m)}=\id_{\T_mM}$ for any $m\in M$;
\end{enumerate}
\end{definition}

\begin{remark}\label{rem1}
A parallelism can also be defined as a local section $P$ of the vector bundle
\begin{equation}
\pi_L^*\T^*M\otimes\pi_R^*\T M\longrightarrow M\times M\,, 
\end{equation}
\begin{comment}
that satisfies the following conditions
\begin{enumerate}[(a)]
\item $P_{(m,n)}\colon\T_mM\to\T_nM$ is a $\mathbb R$-linear isomorphism for any $(m,n)\in\mathscr U\subseteq \Delta_M
$.
\item\label{Item: normalization} $P_{(m,m)}=\id_{\T_mM}$ for any  $m\in M$;
\end{enumerate}
Moreover, it is clear that a parallelism on $(m,n)\in\mathscr U$ is a suitable element $P_{(m,n)}\in\Omega^1(M)\otimes_{C^{\infty}(M\times M)}\mathfrak{X}(M)$. A comparison with the analogous definitions for bi-forms (see \eqref{28.07.2}-\eqref{28.07.3}) shows that a parallelism is two-point tensor on $M$.
\end{comment}
This description follows from the canonical vector bundle isomorphisms\footnote{Given vector bundles $E,F\longrightarrow B$, there is a canonical vector bundle isomorphism $\operatorname{Hom}(E,F)\cong E^\ast \otimes_B F$.}
\begin{equation}
    \operatorname{Hom}\left(\pi_L^\ast\T M,\pi_R^\ast\T M\right)
    %\cong \left(\pi_L^\ast\T M\right)^\ast \otimes_{M\times M}\pi_R^\ast\T M
    \cong \pi_L^\ast\T^\ast M\otimes_{M\times M}\pi_R^\ast\T M\,.
\end{equation}
A comparison with the analogous definitions for bi-forms (see \eqref{28.07.2}-\eqref{28.07.3}) shows that a parallelism is a two-point tensor on $M$.
\end{remark}

Any parallelism $P$ on $M$ determines an affine connection $\nabla^P$ by
\begin{equation}\label{Eq: connection from P}
\left(\nabla^P_ZX\right)_m=\left.\frac{\mathrm d}{\mathrm dt}\right|_{t=0}P_{(\delta(t),m)}X_{\delta(t)}\,,
\end{equation}
where $Z$ is a vector field on $M$,  $\delta$ is any smooth curve satisfying $\delta(0)=m$ and $\dot\delta(0)=Z_m$ (see \cite{AzzaliBoutaibFrabettiPaycha2022}). One proves that the right-hand side does not depend on the whole curve $\delta$, and depends linearly only on  $\dot\delta(0)$.  In particular, one proves that  the condition $P_{(m,m)}=\mathrm{id}_{\mathbf T_mM}$ (which is a sort of  normalization of $P$ along $\Delta_M$) ensures the Leibniz rule for the $C^{\infty}(M)$ module structure on the entry $X$, and hence $\nabla^P$ is an affine connection.

\begin{comment}
This construction depends only on the first-order behaviour of $P$ in the left variable along the diagonal. Indeed, let $(U,q)$ be a coordinate chart and write
\begin{equation}
P=P_j^{\,k}(x,y)\,\mathrm d x^j\otimes\frac{\partial}{\partial y^k}\,.
\end{equation}
If
\begin{equation}
\nabla^P_{\partial_{q^i}}\partial_{q^j}=(\Gamma^P)_{ij}^{\,k}\partial_{q^k}\,,
\end{equation}
then
\begin{equation}\label{Eq: Christoffel symbols from P}
(\Gamma^P)_{ij}^{\,k}(c)=\frac{\partial P_j^{\,k}}{\partial x^i}\left(q(c),q(c)\right)\,.
\end{equation}
\end{comment}

The above correspondence between parallelisms and affine connections \emph{is not} injective, since the relation \eqref{Eq: connection from P} depends only on the first-order behaviour of the parallelism along $\Delta_M$, as the following example shows.

\begin{example}\label{Example: parallelisms on R}
Let $M=\mathbb R$ with global coordinate $q$, and consider the parallelisms
\begin{equation}
P=e^{t-s}\,\mathrm ds\otimes\partial_t\,,\qquad \widetilde P=e^{t-s+(t-s)^3}\,\mathrm ds\otimes\partial_t\,.
\end{equation}
Although $P$ and $\widetilde P$ are distinct, the cubic term has vanishing first derivative along the diagonal. Explicit computations read that both parallelisms induce the affine connection given by
\begin{equation}
\nabla^P_{\partial_q}\partial_q=\nabla^{\widetilde P}_{\partial_q}\partial_q=-\partial_q\,.
\end{equation}
\end{example}

Even if not injective, the correspondence given in \eqref{Eq: connection from P} \emph{is} indeed surjective, namely  every affine connection arises from a parallelism through \eqref{Eq: connection from P}. Guided by \eqref{Eq: connection from parallel transport}, we consider a family of curves $\Gamma_0(m,n,\cdot)$ on $M$ which are smoothly parametrized by pairs of sufficiently close points $(m,n)$, each joining $m$ to $n$ and reducing to the constant curve at each $c$ when $m=n=c$. Namely, let $\mathscr U_0$ be an open neighbourhood of $\Delta_M$ in $M\times M$ and consider a smooth map
\begin{equation}
\Gamma_0\colon\mathscr U_0\times[0,1]\longrightarrow M
\end{equation}
satisfying
\begin{equation}
\Gamma_0(m,n,0)=m\,,\qquad \Gamma_0(m,n,1)=n\,,\qquad \Gamma_0(c,c,\cdot)=c\,.
\end{equation}
With such a map, we set
\begin{equation}
P_{(m,n)}^{\nabla,\Gamma_0}=\mathscr P_{1\leftarrow0}^{\nabla,\Gamma_0(m,n,\cdot)}\colon\T_mM\longrightarrow\T_nM\,.
\end{equation}
By the well known properties of the parallel transport map, one easily sees that $P^{\nabla,\Gamma_0}$ is a parallelism. To determine the affine connection induced by it, we use the following variational lemma. We first introduce the notation we need to describe it.

Given intervals $I,J\subseteq\mathbb R$ with $0\in J$ and a smooth map $H\colon I\times J\to M$, we write
\begin{equation}
H(\cdot,s)(t)=H(t,s)\,,\qquad H(t,\cdot)(s)=H(t,s)\,
\end{equation}
to identify curves parametrised by $t$ for each fixed $s$  or  by $s$ for each fixed $t$.
A smooth vector field along $H$ is a smooth map $V\colon I\times J\to\T M$ such that $V(t,s)\in\T_{H(t,s)}M$ for every $(t,s)\in I\times J$. This means that  $V(\cdot,s)$ and $V(t,\cdot)$ are smooth vector fields along $H(\cdot,s)$ and $H(t,\cdot)$, respectively.

\begin{lemma}\label{Lemma: variation of parallel vector fields}
Let $M$ be a smooth manifold endowed with an affine connection $\nabla$, let $H\colon I\times J\to M$ be as above, and let $V$ be a smooth vector field along $H$. Assume that $V(t,\cdot)$ is $\nabla$-parallel along $H(t,\cdot)$ for every $t\in I$. For  any $(t,s)\in I\times J$, it is 
\begin{multline}\label{Eq: variation of parallel vector fields}
\left.\mathrm D_\tau^{H(\cdot,s)}\right|_{\tau=t}V(\tau,s)
-\mathscr P_{s\leftarrow0}^{\nabla,H(t,\cdot)}
\left.\mathrm D_\tau^{H(\cdot,0)}\right|_{\tau=t}V(\tau,0)\\
=-\int_0^s\mathscr P_{s\leftarrow\sigma}^{\nabla,H(t,\cdot)}
R^\nabla_{H(t,\sigma)}\left(\left.\partial_\tau\right|_{\tau=t}H(\tau,\sigma),
\left.\partial_{\sigma'}\right|_{\sigma'=\sigma}H(t,\sigma')\right)V(t,\sigma)\,\mathrm d\sigma\,,
\end{multline}
where $R^\nabla$ is the curvature tensor of $\nabla$.
\end{lemma}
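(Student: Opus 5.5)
Fix $t\in I$ and set $W(s)=\mathrm D_\tau^{H(\cdot,s)}\big|_{\tau=t}V(\tau,s)$, a smooth vector field along the curve $c(s)=H(t,s)$. The statement is the variation-of-constants formula for $W$. So the plan is to compute $\mathrm D_s^{c}W$, recognise it as a curvature term, and integrate using parallel transport along $c$.

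The first step is the commutation formula for covariant derivatives of a vector field along a two-parameter map:
\begin{equation}
\mathrm D_s\mathrm D_\tau V-\mathrm D_\tau\mathrm D_s V=R^\nabla\left(\partial_s H,\partial_\tau H\right)V\,.
\end{equation}
Here $\mathrm D_s$ and $\mathrm D_\tau$ are the covariant derivatives along the $s$- and $\tau$-curves, and the sign follows the paper's convention for $R^\nabla$. I would prove it locally. Write $V=V^k\partial_k\circ H$ in a chart and expand both iterated derivatives with Christoffel symbols. The second-order terms $\partial_s\partial_\tau V^k$ cancel, and the remaining terms assemble into $R^k{}_{lij}\,\partial_sH^i\,\partial_\tau H^j\,V^l$. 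This identity holds for an arbitrary, possibly non-torsion-free, connection. The reason is that $R^\nabla$ only involves $\partial_s\partial_\tau H=\partial_\tau\partial_s H$, so torsion never enters. That sign and index bookkeeping is the only delicate point.

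Next, the hypothesis that $V(t,\cdot)$ is parallel gives $\mathrm D_s V\equiv0$ for every $t$, and hence $\mathrm D_\tau\mathrm D_sV=0$. The commutation formula then reduces to
\begin{equation}
\mathrm D_s^{c}W(\sigma)=R^\nabla_{H(t,\sigma)}\left(\partial_\sigma H(t,\sigma),\partial_\tau H(t,\sigma)\right)V(t,\sigma)=-R^\nabla_{H(t,\sigma)}\left(\partial_\tau H,\partial_\sigma H\right)V(t,\sigma)\,.
\end{equation}

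Finally, I would integrate this linear ODE along $c$. Choose a parallel frame $(E_a)$ along $c$ and write $W=W^aE_a$. Then $\mathrm D_sW=\dot W^aE_a$ and $\mathscr P_{s\leftarrow\sigma}^{\nabla,c}E_a(\sigma)=E_a(s)$. Integrating $\dot W^a$ from $0$ to $s$ gives
\begin{equation}
W(s)-\mathscr P_{s\leftarrow 0}^{\nabla,c}W(0)=\int_0^s\mathscr P_{s\leftarrow\sigma}^{\nabla,c}\,\mathrm D_\sigma^{c}W(\sigma)\,\mathrm d\sigma\,,
\end{equation}
which is the formula in \cref{Lemma: variation of parallel vector fields} once the expression for $\mathrm D_\sigma^{c}W$ is substituted. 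For $s<0$ the same formula holds with oriented integrals.

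The main obstacle is therefore just the careful verification of the commutation formula and its sign against \eqref{Eq: curvature tensor}. The rest is standard.
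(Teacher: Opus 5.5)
Your proposal is correct and follows the same route as the paper. Both arguments use the commutation identity $\mathrm D_\tau\mathrm D_\sigma V-\mathrm D_\sigma\mathrm D_\tau V=R^\nabla(\partial_\tau H,\partial_\sigma H)V$, kill the term containing $\mathrm D_\sigma V$ by parallelism, and integrate $\mathrm D_\tau V(t,\cdot)$ along $H(t,\cdot)$ with the variation-of-constants (fundamental theorem of covariant calculus) formula. The only difference is that the paper cites the commutation identity and the integral formula, whereas you verify them directly: the identity in local coordinates, where indeed only the symmetry of the mixed partials of $H$ enters, so torsion plays no role, and the integral formula via a parallel frame.
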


\begin{proof}
The curvature identity for vector fields along $H$ \cite{Wilkins2010} gives
\begin{multline}
R^\nabla_{H(t,s)}\left(\left.\partial_\tau\right|_{\tau=t}H(\tau,s),
\left.\partial_{\sigma'}\right|_{\sigma'=s}H(t,\sigma')\right)V(t,s)\\
=\left.\mathrm D_\tau^{H(\cdot,s)}\right|_{\tau=t}
\left(\left.\mathrm D_{\sigma}^{H(\tau,\cdot)}\right|_{\sigma=s}V(\tau,\sigma)\right)
-\left.\mathrm D_{\sigma}^{H(t,\cdot)}\right|_{\sigma=s}
\left(\left.\mathrm D_\tau^{H(\cdot,\sigma)}\right|_{\tau=t}V(\tau,\sigma)\right)\,.
\end{multline}
Since $V(\tau,\cdot)$ is $\nabla$-parallel along $H(\tau,\cdot)$, the first term on the right-hand side vanishes, and therefore we can write 
\begin{equation}\label{Eq: curvature identity}
\left.\mathrm D_{\sigma}^{H(t,\cdot)}\right|_{\sigma=s}
\left(\left.\mathrm D_\tau^{H(\cdot,\sigma)}\right|_{\tau=t}V(\tau,\sigma)\right)
=-R^\nabla_{H(t,s)}\left(\left.\partial_\tau\right|_{\tau=t}H(\tau,s),
\left.\partial_{\sigma'}\right|_{\sigma'=s}H(t,\sigma')\right)V(t,s)\,.
\end{equation}
For fixed $t\in I$, define the vector field $E$ along $H(t,\cdot)$ by
\begin{equation}
E(\sigma)=\left.\mathrm D_\tau^{H(\cdot,\sigma)}\right|_{\tau=t}V(\tau,\sigma)\,.
\end{equation}
The fundamental theorem of covariant calculus along $H(t,\cdot)$ yields
\begin{equation}
E(s)=\mathscr P_{s\leftarrow0}^{\nabla,H(t,\cdot)}E(0)
+\int_0^s\mathscr P_{s\leftarrow\sigma}^{\nabla,H(t,\cdot)}
\left.\mathrm D_{\sigma'}^{H(t,\cdot)}\right|_{\sigma'=\sigma}E(\sigma')\,\mathrm d\sigma\,.
\end{equation}
Inserting the relation \eqref{Eq: curvature identity} into the formula above gives \eqref{Eq: variation of parallel vector fields}.
\end{proof}

We now compute the affine connection induced by the parallelism $P^{\nabla,\Gamma_0}$. Let $X,Z\in\mathfrak X(M)$, let $m\in M$, and choose a smooth curve $\delta$ such that $\delta(0)=m$ and $\dot\delta(0)=Z_m$. For $t$ in a sufficiently small interval containing $0$, set
\begin{align}
&H(t,s)=\Gamma_0\left(\delta(t),m,s\right),\\
&V(t,s)=\mathscr P_{s\leftarrow0}^{\nabla,H(t,\cdot)}X_{\delta(t)}.
\end{align}
For every $t$, the vector field $V(t,\cdot)$ is $\nabla$-parallel along $H(t,\cdot)$, and
\begin{align}
&V(t,0)=X_{\delta(t)}, \\ &V(t,1)=P_{\left(\delta(t),m\right)}^{\nabla,\Gamma_0}X_{\delta(t)}.
\end{align}
Moreover, $H(0,\cdot)$ is the constant curve at $m$, and hence
\begin{equation}
\left.\partial_{\sigma'}\right|_{\sigma'=\sigma}H(0,\sigma')=0
\end{equation}
for every $\sigma\in[0,1]$. Applying \cref{Lemma: variation of parallel vector fields} at $(t,s)=(0,1)$, we see that the curvature term vanishes and the parallel transport along $H(0,\cdot)$ is the identity. Therefore,
\begin{equation}
\left.\mathrm D_\tau^{H(\cdot,1)}\right|_{\tau=0}V(\tau,1)
=\left.\mathrm D_\tau^{H(\cdot,0)}\right|_{\tau=0}V(\tau,0)\,.
\end{equation}
Since $H(t,1)=m$, the left-hand side is the derivative of $V(t,1)$ in the fixed vector space $\T_mM$, whereas $H(t,0)=\delta(t)$ and $V(t,0)=X_{\delta(t)}$. It follows that
\begin{equation}
\left.\frac{\mathrm d}{\mathrm dt}\right|_{t=0}P_{\left(\delta(t),m\right)}^{\nabla,\Gamma_0}X_{\delta(t)}
=\left.\mathrm D_\tau^\delta\right|_{\tau=0}X_{\delta(\tau)}
=\left(\nabla_ZX\right)_m\,.
\end{equation}
By \eqref{Eq: connection from P}, since $X$, $Z$, and $m$ are arbitrary, we conclude that
\begin{equation}
\nabla^{P^{\nabla,\Gamma_0}}=\nabla\,.
\end{equation}

We conclude by giving a sufficient condition for the induced affine connection to be curvature-free. We say that a parallelism $P$ defined on an open neighbourhood $\mathscr U$ of $\Delta_M$ in $M\times M$ is \underline{multiplicative} if, for every $(m,n)\in\mathscr U$ and every $c\in M$ such that $(m,c),(c,n)\in\mathscr U$, one has
\begin{equation}\label{Eq: P is multiplicative}
P_{(c,n)}\circ P_{(m,c)}=P_{(m,n)}\,.
\end{equation}

\begin{proposition}\label{Prop: curvature-freeness criterion}
    If a parallelism $P$ on a smooth manifold $M$ is multiplicative, then $\nabla^P$ is curvature-free.
\end{proposition}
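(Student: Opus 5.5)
The plan is to show that, for a multiplicative parallelism, $\nabla^P$ admits local frames of parallel vector fields. The curvature identity $R^{\nabla^P}(X,Y)E=0$ for parallel $E$ then extends to all fields by $C^\infty(M)$-linearity in $E$. Fix $c\in M$ and choose an open neighbourhood $U$ of $c$ with $U\times U\subseteq\mathscr U$. This is possible since $\mathscr U$ is an open neighbourhood of $\Delta_M$. For $v\in\T_cM$, define the vector field $E^v$ on $U$ by $E^v_n=P_{(c,n)}v$. It is smooth because $P$ is a smooth section, and since each $P_{(c,n)}$ is an isomorphism, a basis of $\T_cM$ yields a frame $\{E^{v_a}\}$ on $U$.

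Next we check that $\nabla^P E^v=0$ on $U$, using \eqref{Eq: connection from P}. Take $m\in U$, a vector field $Z$, and a curve $\delta$ in $U$ with $\delta(0)=m$ and $\dot\delta(0)=Z_m$. Multiplicativity, applied with the triple $(c,\delta(t),m)$, gives
\begin{equation}
P_{(\delta(t),m)}E^v_{\delta(t)}=P_{(\delta(t),m)}P_{(c,\delta(t))}v=P_{(c,m)}v\,.
\end{equation}
This is independent of $t$, so its derivative vanishes and $(\nabla^P_ZE^v)_m=0$. The pairs $(c,\delta(t))$, $(\delta(t),m)$ and $(c,m)$ all lie in $U\times U\subseteq\mathscr U$, so the hypothesis applies.

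Finally, for $X,Y\in\mathfrak X(M)$, each $E^{v_a}$ is parallel on $U$, so
\begin{equation}
R^{\nabla^P}(X,Y)E^{v_a}=\nabla^P_X\nabla^P_YE^{v_a}-\nabla^P_Y\nabla^P_XE^{v_a}-\nabla^P_{[X,Y]}E^{v_a}=0\quad\text{on }U\,.
\end{equation}
Here we use that $\nabla^P$ is local, being an affine connection. Any $W\in\mathfrak X(U)$ can be written as $W=f^aE^{v_a}$ with smooth $f^a$, and tensoriality of the curvature in its last slot gives $R^{\nabla^P}(X,Y)W=0$ on $U$. Since $c$ was arbitrary, $R^{\nabla^P}=0$.

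The only delicate point is the domain bookkeeping in the second step: the multiplicativity identity is assumed only when all three pairs lie in $\mathscr U$. Choosing $U$ so that $U\times U\subseteq\mathscr U$ handles this, and the existence of such $U$ follows directly from the openness of $\mathscr U$ together with $(c,c)\in\mathscr U$. The rest of the argument is routine.
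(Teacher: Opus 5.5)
Your proof is correct and follows essentially the same route as the paper, which also builds the $\nabla^P$-parallel frame $m\mapsto P_{(c,m)}v$ on $\mathscr U^{(c)}=\{m\mid (c,m)\in\mathscr U\}$ and uses multiplicativity to show that its covariant derivative vanishes. Your product neighbourhood $U\times U\subseteq\mathscr U$ and the explicit tensoriality step only make the domain bookkeeping and the final passage from a parallel frame to $R^{\nabla^P}=0$ more explicit.
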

\begin{proof}
Let $c\in M$, and consider $\mathscr U^{(c)}=\left\{m\in M\mid(c,m)\in\mathscr U\right\}$. Using \eqref{Eq: P is multiplicative}, we construct a local frame of $\nabla^P$-parallel vector fields on $\mathscr U^{(c)}$ by setting
\begin{equation}\label{Eq: parallel v.f. P}
X_v(m)=P_{(c,m)}v\,, \qquad m\in \mathscr U^{(c)}\,,
\end{equation}
with $v\in\T_cM$. Fix $m\in\mathscr U^{(c)}$. If $Z\in\mathfrak X(M)$ and $\delta$ is a smooth curve such that $\delta(0)=m$ and $\dot\delta(0)=Z_m$, then
\begin{align}
\left(\nabla_Z^P X_v\right)_m
\stackrel{\eqref{Eq: connection from P}}{=}\left.\frac{\mathrm d}{\mathrm dt}\right|_{t=0}P_{\left(\delta(t),m\right)}\left(X_v\right)_{\delta(t)}
\stackrel{\eqref{Eq: parallel v.f. P}}{=}\left.\frac{\mathrm d}{\mathrm dt}\right|_{t=0}P_{\left(\delta(t),m\right)}P_{\left(c,\delta(t)\right)}v
\stackrel{\eqref{Eq: P is multiplicative}}{=}\left.\frac{\mathrm d}{\mathrm dt}\right|_{t=0}P_{(c,m)}v
=0\,.
\end{align}
Since $\left\{\mathscr U^{(c)}\mid c\in M\right\}$ is an open cover of $M$, $\nabla^P$ is curvature-free.
\end{proof}

The converse of the \cref{Prop: curvature-freeness criterion} does not hold for an arbitrary parallelism, as shown by the \cref{Example: parallelisms on R}. Indeed, the parallelism $\widetilde P$ induces a curvature-free affine connection, although it is not multiplicative.

\section{The Inverse Problem in Information Geometry in a two-point tensorial formalism}
\label{Sec: inverse problem in Information Geometry}

The stage is now set to formulate and study the inverse problem in information geometry within a two-point tensorial formalism. 

\begin{problem}
\label{Problem: inverse problem IG}
Let $(M,g,\nabla)$ be a Lauritzen manifold. The \underline{Inverse Problem in Information Geometry} consists in finding a contrast bi-form $\varpi\in \Omega^{1,1}_{\Delta_M}(M\mid M)$ such that
\begin{equation}
\label{Eq: inverse problem}
\begin{cases}
    g^\varpi = g\,,
    \\
    \nabla^\varpi = \nabla\,.
\end{cases}
\end{equation}
When this problem has a solution $\varpi$, we say that $\varpi$ \underline{integrates} $(M,g,\nabla)$, or that $\varpi$ is an \underline{integrating bi-form} for $(M,g,\nabla)$.
\end{problem}

The section is organized as follows. In \cref{Sec: existence}, by tailoring Matumoto's argument  to the bi-tensorial setting, we first prove that every Lauritzen manifold admits an integrating bi-form. Notice that such a result provides the existence of a solution to the inverse problem, but the construction proceeds through local coordinates, cut-off functions, and partitions of unity, and therefore does not provide an intrinsic procedure for constructing a solution directly from the given geometric data. Moreover, the  bi-form coming out from such a procedure need not recover the distinguished statistical potentials associated to paradigmatic models. In \cref{Sec: solution via parallelisms}, we address the first limitation by constructing integrating bi-forms explicitly from the given affine connection through parallelisms obtained by parallel transport. Finally, in \cref{Sec: inverse problem SMAT} and \cref{Sec: inverse problem statistical manifolds}, we use \cref{Prop: potentials solution bi-form} to pass from these integrating bi-forms to pre-contrast and contrast functions, and recover, for cases which are known in the existing literature, the usual potential functions. 
%Assume that \cref{Problem: inverse problem IG} is solvable for Lauritzen manifolds. Then it also yields solutions to the inverse problems for SMATs and statistical manifolds with respect to pre-contrast and contrast functions, respectively. Indeed, let $(M,g,\nabla)$ be a SMAT. Since every SMAT is a Lauritzen manifold, there is contrast bi-form $\varpi$ integrating $(M,g,\nabla)$. Since $\nabla$ is torsion-free, \cref{Thm: integration of a contrast bi-form} implies that $\varpi$ is statistically equivalent to a left-exact contrast bi-form of the form $\mathrm d^LS$. Therefore, the pre-contrast function corresponding to $S$ induces the same metric and the same affine connection as $\varpi$, and hence generates the prescribed SMAT. Similarly, let $(M,g,\nabla)$ be a statistical manifold. Since it is a Lauritzen manifold, there is an integrating contrast bi-form $\varpi$. Since both conjugate connections are torsion-free, \cref{Thm: integration of a contrast bi-form} implies that $\varpi$ is statistically equivalent to a bi-exact contrast bi-form of the type $\mathrm d^L\mathrm d^R D$. Therefore, the contrast function $D$ induces the same metric and the same pair of conjugate connections as $\varpi$, and hence generates the prescribed statistical manifold.

\subsection{An existence result}\label{Sec: existence}

We first show that \cref{Problem: inverse problem IG} is always solvable. The proof adapts Matumoto's construction for contrast functions on statistical manifolds \cite{Matumoto-1993} to the bi-form setting.
\begin{proposition}\label{Prop: matumoto}
    Every Lauritzen manifold can be integrated by a globally defined contrast bi-form.
\end{proposition}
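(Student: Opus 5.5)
The plan is to follow Matumoto's strategy: write down an explicit integrating bi-form in each square product chart, then glue these local pieces with a partition of unity in the left variable and cut-off functions in the right variable.

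The gluing works because both conditions in \eqref{Eq: inverse problem} only involve the data $\iota^\ast\varpi(X\mid Y)$ and $\iota^\ast\mathcal L_{Z^L}\varpi(X\mid Y)$. These data depend $\mathbb R$-linearly on $\varpi$, and because the right-hand sides of \eqref{Eq: inverse problem} are tensorial, they are compatible with convex combinations along the diagonal.

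\textbf{Local step.} Fix a chart $(U,q)$ of $M$ and let $(x,y)$ be the induced square product coordinates on $U\times U$. Write $g=g_{ij}\,\mathrm dq^i\otimes\mathrm dq^j$ and $\nabla_{\partial_k}\partial_i=\Gamma_{ki}^{\,l}\partial_l$. On $U\times U$ I set
\begin{equation}
\varpi_U=\Big(g_{ij}(y)+\Gamma_{ki}^{\,l}(y)\,g_{lj}(y)\,\big(x^k-y^k\big)\Big)\,\mathrm dq^i\boxtimes\mathrm dq^j\,.
\end{equation}
\begin{itemize}
\item On the diagonal $x=y$, the second summand vanishes, so $(\varpi_U)_{\restriction\Delta_U}=g$.
\item The left lift $\partial_k^L$ acts as $\partial/\partial x^k$ and does not touch the $y$-dependent coefficients. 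Evaluating at $x=y$ therefore gives $\iota^\ast\mathcal L_{\partial_k^L}\varpi_U(\partial_i\mid\partial_j)=\Gamma_{ki}^{\,l}g_{lj}=g(\nabla_{\partial_k}\partial_i,\partial_j)$.
\item By the tensoriality properties \eqref{Eq: left tensoriality}--\eqref{Eq: right tensoriality} and the defining identity of $\nabla^\varpi$, this gives $g^{\varpi_U}=g$ and $\nabla^{\varpi_U}=\nabla$ on $U$.
\end{itemize}

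\textbf{Gluing step.} Choose a locally finite atlas $\{(U_a,q_a)\}$ and a subordinate partition of unity $\{\rho_a\}$ with $\operatorname{supp}\rho_a\subset U_a$. Choose $\chi_a\in C^\infty_c(U_a)$ equal to $1$ on a neighbourhood of $\operatorname{supp}\rho_a$; this may require first shrinking to a refinement with relatively compact supports. Define
\begin{equation}
\varpi=\sum_a(\pi_L^\ast\rho_a)(\pi_R^\ast\chi_a)\,\varpi_{U_a}\,.
\end{equation}
Each summand is supported in the closed set $\operatorname{supp}\rho_a\times\operatorname{supp}\chi_a\subset U_a\times U_a$, so extending it by zero gives a smooth bi-form on all of $M\times M$. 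Local finiteness of the supports then makes the sum a globally defined smooth $(1,1)$-bi-form.

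\textbf{Verification.} On the diagonal, $\chi_a=1$ wherever $\rho_a\neq0$, so $\varpi_{\restriction\Delta_M}=\sum_a\rho_a\,g=g$, and $\varpi$ is a contrast bi-form. For the connection, apply $\mathcal L_{Z^L}$ with the Leibniz rule:
\begin{itemize}
\item $Z^L$ annihilates $\pi_R^\ast\chi_a$.
\item Differentiating $\pi_L^\ast\rho_a$ produces, after restriction to $\Delta_M$, the term $\sum_a(Z\rho_a)\,\chi_a\,g(X,Y)=\big(Z\sum_a\rho_a\big)g(X,Y)=0$.
\item Differentiating $\varpi_{U_a}$ produces $\sum_a\rho_a\,g(\nabla_ZX,Y)=g(\nabla_ZX,Y)$.
\end{itemize}
Hence $g^\varpi(\nabla^\varpi_ZX,Y)=g(\nabla_ZX,Y)$, and $\nabla^\varpi=\nabla$ by non-degeneracy of $g$.

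The step I expect to need the most care is this gluing bookkeeping. One must ensure that the cut-off in the right variable equals $1$ near the diagonal points where $\rho_a$ or its first derivatives are nonzero, so that no spurious first-order terms appear. One must also check that extension by zero preserves smoothness, which is why $\chi_a$ is taken with compact support inside $U_a$.
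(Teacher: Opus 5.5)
Your proposal is correct and follows essentially the paper's own argument: the same local bi-form $\bigl(g_{ij}(y)+\Gamma_{ki}^{\,l}(y)\,g_{lj}(y)\,(x^k-y^k)\bigr)\,\mathrm dq^i\boxtimes\mathrm dq^j$ in each chart, glued into a globally defined contrast bi-form by a partition of unity and cut-off functions. The only difference is in the gluing: the paper puts the partition of unity in the right variable, via $\eta_\alpha(m,n)\,\varepsilon_\alpha(n)\,\varpi^{(U_\alpha)}$, so that $\mathcal L_{Z^L}$ never differentiates $\varepsilon_\alpha$, whereas you put $\rho_a$ in the left variable and correctly cancel the extra term through $\sum_a Z\rho_a=0$ (and compact support of $\chi_a$ is not needed, only $\operatorname{supp}\chi_a\subset U_a$).
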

\begin{proof}
Let $(M,g,\nabla)$ be a Lauritzen manifold. We first solve the inverse problem on a domain chart on $M$. Let $(U,q)$ be a chart of $M$, and consider the restricted Lauritzen manifold $(U,g_{\restriction U},\nabla_{\restriction U})$. One explicitly computes  that an integrating bi-form is
\begin{equation}\label{Eq: local contrast bi-form}
\varpi^{(U)}=\left(\pi_R^\ast g_{ij}+\pi_R^\ast\Gamma_{kij}\left(x^k-y^k\right)\right)\mathrm dq^i\boxtimes\mathrm dq^j\,,
\end{equation}
where $x^i=q^i\circ\pi_L$, $y^i=q^i\circ\pi_R$, and
\begin{align}
&g_{ij}=g(\partial_{q^i},\partial_{q^j})\,, \\ 
&\Gamma_{kij}=\Gamma^\ell_{ki}\,g_{\ell j}=g\left(\nabla_{\partial_{q^k}}\partial_{q^i},\partial_{q^j}\right)\,.
\end{align}
Consider now a locally finite open cover $\mathcal A=\{U_\alpha\}_{\alpha\in A}$ of $M$ given by the domains of local  charts of a smooth atlas, and denote by $\varpi^{(U_\alpha)}$ be the corresponding local bi-forms. Let $\{\varepsilon_\alpha\}_{\alpha\in A}$ be a partition of unity subordinate to $\mathcal A$, with $\operatorname{supp}(\varepsilon_\alpha)\subset U_\alpha\,$.
For each $\alpha$, choose a smooth cut-off function $\eta_\alpha$ such that $\operatorname{supp}(\eta_\alpha)\subset U_\alpha\times U_\alpha$ 
and $\eta_\alpha=1$ on an open neighbourhood $V_\alpha$ around 
$\Delta_{\operatorname{supp}(\varepsilon_\alpha)}=\left\{(c,c)\mid c\in\operatorname{supp}(\varepsilon_\alpha)\right\}$.
Define the bi-form $\varpi^{(\alpha)}$ on $M\times M$ by
\begin{equation}
\varpi^{(\alpha)}(m,n)=
\begin{cases}
\eta_\alpha(m,n)\,\varepsilon_\alpha(n)\,\varpi^{(U_\alpha)}(m,n)\,, & (m,n)\in U_\alpha\times U_\alpha\,,\\[2mm]
0\,, & (m,n)\notin U_\alpha\times U_\alpha\,.
\end{cases}
\end{equation}
Since $\operatorname{supp}(\eta_\alpha)\subset U_\alpha\times U_\alpha$, this zero extension is smooth. We then set
\begin{equation}
\varpi=\sum_{\alpha\in A}\varpi^{(\alpha)}\,.
\end{equation}
The sum is locally finite, hence $\varpi$ is a smooth $(1,1)$-bi-form on $M\times M$.
We claim that $\varpi$ integrates $(M,g,\nabla)$. 

In order to prove that, let $c\in M$. If $\varepsilon_\alpha(c)\ne0$, then $(c,c)\in V_\alpha$, so that $\eta_\alpha(c,c)=1$. Therefore, it is 
\begin{equation}
\varpi^{(\alpha)}_{\restriction\Delta_M}(c)=\varepsilon_\alpha(c)\,\varpi^{(U_\alpha)}_{\restriction\Delta_{U_\alpha}}(c)=\varepsilon_\alpha(c)\,g(c)\,.
\end{equation}
Using the local finiteness of the family and the identity $\sum_{\alpha\in A}\varepsilon_\alpha=1$, we obtain
\begin{equation}
\varpi_{\restriction\Delta_M}(c)=\sum_{\alpha\in A}\varepsilon_\alpha(c)\,g(c)=g(c)\,.
\end{equation}
Since $c$ is arbitrary, we can write
\begin{equation}
\varpi_{\restriction\Delta_M}=g\,.
\end{equation}
It remains to prove that $\nabla^\varpi=\nabla$. Let $X,Y,Z\in\mathfrak X(M)$ and $c\in M$. By the definition of the affine connection induced by $\varpi$, one has
\begin{equation}
g\left(\nabla^\varpi_ZX,Y\right)(c)=\left(\mathcal L_{Z^L}\left(\varpi(X\mid Y)\right)\right)(c,c)\,:
\end{equation}
such expression can, by local finiteness, be written as 
\begin{equation}
\left(\mathcal L_{Z^L}\left(\varpi(X\mid Y)\right)\right)(c,c)=\sum_{\alpha\in A}\left(\mathcal L_{Z^L}\left(\varpi^{(\alpha)}(X\mid Y)\right)\right)(c,c)\,.
\end{equation}
If $\varepsilon_\alpha(c)\ne0$, then $\eta_\alpha=1$ on a neighbourhood around  $(c,c)$, while $\varepsilon_\alpha\circ\pi_R$ is constant along $Z^L$. We can then write 
\begin{equation}
\left(\mathcal L_{Z^L}\left(\varpi^{(\alpha)}(X\mid Y)\right)\right)(c,c)=\varepsilon_\alpha(c)\left(\mathcal L_{Z^L}\left(\varpi^{(U_\alpha)}(X\mid Y)\right)\right)(c,c)\,.
\end{equation}
Since $\varpi^{(U_\alpha)}$ integrates $(U_\alpha,g_{\restriction U_\alpha},\nabla_{\restriction U_\alpha})$, it is 
\begin{equation}
\left(\mathcal L_{Z^L}\left(\varpi^{(U_\alpha)}(X\mid Y)\right)\right)(c,c)=g\left(\nabla_ZX,Y\right)(c)\,,
\end{equation}
and therefore,
\begin{equation}
\left(\mathcal L_{Z^L}\left(\varpi(X\mid Y)\right)\right)(c,c)=\sum_{\alpha\in A}\varepsilon_\alpha(c)\,g\left(\nabla_ZX,Y\right)(c)=g\left(\nabla_ZX,Y\right)(c)\,.
\end{equation}
It follows that
\begin{equation}
g\left(\nabla^\varpi_ZX,Y\right)(c)=g\left(\nabla_ZX,Y\right)(c)\,
\end{equation}
from which, since $g$ is non-degenerate, we conclude that $\nabla^\varpi=\nabla$.
\end{proof}

Although the construction above gives a global existence result for the inverse problem in Information Geometry, it is not constructive in general. It yields an explicit formula for an integrating bi-form only when the manifold is covered by a single global chart. Even in this case, however, the resulting solution need not coincide with the potential expected in paradigmatic examples, as the following example shows.

\begin{example}
 Let $M=\{\lambda\in \mathbb R\mid \lambda>0\}$ be the manifold of parameters of the univariate exponential distribution. Following \cite{Calin-Udriste-2014}, in terms of the Cartesian coordinate $\lambda$ on $\mathbb R$, consider the \emph{Fisher-Rao metric} 
 \begin{equation}
     g=\frac{1}{\lambda^2}\, \mathrm d\lambda\otimes \mathrm d\lambda\,,
 \end{equation}
 and the \emph{mixture affine connection}
 \begin{equation}
     \nabla_{\partial_\lambda}\partial_\lambda=-\frac{2}{\lambda}\partial_\lambda\,.
 \end{equation}

 Let us specialize the contrast bi-form \eqref{Eq: local contrast bi-form} to the Lauritzen manifold $(M,g,\nabla)$. Since $g_{\lambda\lambda}=\frac{1}{\lambda^2}$ and $\Gamma_{\lambda\lambda\lambda}=-\frac{2}{\lambda^3}$, we obtain
 \begin{equation}
     \varpi=\left(\frac{1}{y^2}-\frac{2}{y^3}\left(x-y\right)\right)\mathrm d\lambda\boxtimes \mathrm d\lambda=\left(\frac{3}{y^2}-\frac{2x}{y^3}\right)
    \mathrm d\lambda\boxtimes \mathrm d\lambda\,,
 \end{equation}
 where $(x,y)$ are the Cartesian coordinates of $\mathbb R^2$. Since the parameter manifold is one-dimensional, $\varpi$ is bi-exact. The most general potential is
\begin{equation}
    D_{A,B}(x,y)=-\frac{3x}{y}+\frac{x^2}{2y^2}+A(x)+B(y)\,,
\end{equation}
with $A$ and $B$ arbitrary smooth functions of one variable.

On the other hand, the Kullback-Leibler relative entropy of the exponential family is
\begin{equation}
    D_{\operatorname{KL}}(x,y)=\log\frac{x}{y}+ \frac{y}{x}-1\,.
\end{equation}
According to our sign convention, the expected potential is therefore $-D_{\operatorname{KL}}$. However,
\begin{equation}
    \mathrm d^L\mathrm d^R\left(-D_{\operatorname{KL}}\right)
    =\frac{1}{x^2}\,\mathrm d\lambda\boxtimes\mathrm d\lambda\,,
\end{equation}
which is not equal to $\varpi$. Hence no potential $D_{A,B}$ of $\varpi$ can coincide with $-D_{\operatorname{KL}}$. Therefore, although $\varpi$ integrates
$(M,g,\nabla)$, it does not recover the expected canonical potential for this model.
\end{example}

%The previous discussion shows that the inverse problem admits solutions, but \cref{Prop: matumoto} neither provides a constructive procedure for selecting one nor ensures compatibility with the distinguished potentials arising in paradigmatic examples. 
Our aim is now to describe an approach based on parallelisms that yields explicit integrating bi-forms and, through suitable choices of the auxiliary data, recovers the established constructions in the principal model cases.

\subsection{Solution via parallelisms}\label{Sec: solution via parallelisms}

The solution proposed here is based on a suitable correspondence between contrast bi-forms and parallelisms. We begin by noticing that any  contrast bi-form $\varpi$ determines a unique parallelism\footnote{The normalisation condition, i.e. the condition $P_{(m,m)}=\mathrm{id}_{\mathbf T_mM}$ directly comes since the metric $g^\varpi$ is defined to be the restriction to the diagonal $\Delta_M$ of the contrast bi-form $\varpi$; the invertibility of the maps $P_{(m,n)}$ defined in \eqref{28.07.4} (for $(m,n)$ in a suitable open subset around the diagonal $\Delta_M$) comes from the non-degeneracy of the diagonal restriction of the contrast bi-form $\varpi$ and the lower semi-continuity of the rank function.} $P^\varpi$, via 
\begin{equation}
\label{28.07.4}
\varpi(X\mid Y)(m,n)=g^\varpi_n\left(P^\varpi_{(m,n)}X_m,Y_n\right)\,,
\end{equation}
while  a pseudo-Riemannian metric $g$ together with a parallelism $P$ determines the contrast bi-form
\begin{equation}\label{Eq: canonical form}
\varpi^{g,P}(X\mid Y)(m,n)=g_n\left(P_{(m,n)}X_m,Y_n\right)\,.
\end{equation}
%The normalisation condition for the maps $P_{(m,m)}$ directly comes since the metric $g^\varpi$ is defined to be the restriction to the diagonal $\Delta_M$ of the contrast bi-form $\varpi$; the invertibility of the maps $P_{(m,n)}$ defined in \eqref{28.07.4} (for $(m,n)$ in a suitable open subset around the diagonal $\Delta_M$) comes from the non-degeneracy of the diagonal restriction of the contrast bi-form $\varpi$ and the lower semicontinuity of the rank function. 
\begin{comment}
\begin{remark}
If $(U\subseteq M,q)$ gives a (local) chart for $M$, with $\pi_L^*q=x$ and $\pi_R^*q=y$ on $U\times U$, one can write a contrast bi-form as $\varpi=\omega_{ij}(x,y)\dd x^i\otimes\dd y^j$, the corresponding metric tensor on $U$ as $g^\varpi=g_{ij}(x)\dd x^i\otimes \dd x^j\,=\,\omega_{ij}(x, y=x)\dd x^i\otimes \dd y_{|y=x}^j$, so that  the above expressions can be written as (with $g^{jk}g_{ki}=\delta^j_i$)
\begin{equation}
P_{(x,y)}\,=\,\omega_{ij}(x,y)\,g^{jk}(y)\,\dd x^i\otimes \red{\partial_{y^k}}\,,
\end{equation}
%This line shows that the  two constructions are inverse to each other.
\end{remark}
\end{comment}

\begin{remark}
    Let us write $P^\varpi$ in the square chart $(U\times U,x,y)$ associated to a coordinate chart $(U,q)$ of $M$. If
    \begin{equation}
        \varpi=\varpi_{ij}\,\mathrm dq^i\boxtimes\mathrm dq^j=\varpi_{ij}\,\mathrm dx^i\otimes\mathrm dy^j
    \end{equation}
    is the local expression of a contrast bi-form $\varpi$, and
    \begin{equation}
        g=g_{ij}\,\mathrm dq^i\otimes\mathrm dq^j\,,\qquad g_{ij}=\iota^\ast\varpi_{ij}\,,
    \end{equation}
    is the local expression of the induced metric, then
    \begin{equation}
        P^\varpi=\left(\pi_R^\ast g^{jk}\right)\varpi_{ij}\,\mathrm dx^i\otimes\partial_{y^k}\,,
    \end{equation}
    where $(g^{jk})$ is the inverse of $(g_{ij})$, namely $g^{jk}g_{ki}=\delta^j_i$. Conversely, if
    \begin{equation}
        P=P^k_i\,\mathrm dx^i\otimes\partial_{y^k}
    \end{equation}
    is the local expression of a parallelism $P$, then
    \begin{equation}
        \varpi^{g,P}=\left(\pi_R^\ast g_{kj}\right)\,P^k_i\,\mathrm dq^i\boxtimes\mathrm dq^j=\left(\pi_R^\ast g_{kj}\right)\,P^k_i\,\mathrm dx^i\otimes\mathrm dy^j\,.
    \end{equation}
\end{remark}

One notices that the contrast bi-form $\varpi^{g,P}$ induces the Lauritzen manifold $(M,g,\nabla^P)$, where $\nabla^P$ is the affine connection associated to $P$ via  \eqref{Eq: connection from P}. Indeed, the normalization condition \ref{Item: normalization} implies
\begin{equation}
\left(\varpi^{g,P}\right)_{\restriction\Delta_M}=g\,.
\end{equation}
On the other hand, for any $X,Y,Z\in\mathfrak X(M)$ and $m\in M$, the affine connection induced by $\varpi^{g,P}$ satisfies
\begin{equation}
g\left(\nabla^{\varpi^{g,P}}_ZX,Y\right)(m)=\left(\mathcal L_{Z^L}\left(\varpi^{g,P}(X\mid Y)\right)\right)(m,m)\,.
\end{equation}
The kinematic definition of the Lie derivative gives
\begin{equation}
\left(\mathcal L_{Z^L}\left(\varpi^{g,P}(X\mid Y)\right)\right)(m,m)=\left.\frac{\mathrm d}{\mathrm dt}\right|_{t=0}\varpi^{g,P}(X\mid Y)\left(\Phi^Z_t(m),m\right)\,,
\end{equation}
where $\Phi^Z$ is the flow of $Z$. By the definition of $\varpi^{g,P}$, the right-hand side reads
\begin{equation}
\left.\frac{\mathrm d}{\mathrm dt}\right|_{t=0}g_m\left(P_{\left(\Phi^Z_t(m),m\right)}X_{\Phi^Z_t(m)},Y_m\right)=g_m\left(\left.\frac{\mathrm d}{\mathrm dt}\right|_{t=0}P_{\left(\Phi^Z_t(m),m\right)}X_{\Phi^Z_t(m)},Y_m\right)\,.
\end{equation}
Since $t\mapsto\Phi^Z_t(m)$ is a smooth curve satisfying $\Phi^Z_0(m)=m$ and
\begin{equation}
\left.\frac{\mathrm d}{\mathrm dt}\right|_{t=0}\Phi^Z_t(m)=Z_m\,,
\end{equation}
the definition of $\nabla^P$ (cf. \eqref{Eq: connection from P}) yields
\begin{equation}
\left.\frac{\mathrm d}{\mathrm dt}\right|_{t=0}P_{\left(\Phi^Z_t(m),m\right)}X_{\Phi^Z_t(m)}=\left(\nabla^P_ZX\right)_m\,.
\end{equation}
Therefore,
\begin{equation}
g\left(\nabla^{\varpi^{g,P}}_ZX,Y\right)(m)=g\left(\nabla^P_ZX,Y\right)(m)\,.
\end{equation}
Since $g$ is non-degenerate, we conclude that
\begin{equation}
\nabla^{\varpi^{g,P}}=\nabla^P\,.
\end{equation}

The above lines show that, in order to solve the inverse problem for a Lauritzen manifold $(M,g,\nabla)$, it is sufficient to encode the given affine connection $\nabla$ in a parallelism. Recall from the discussion in \cref{Section: parallelisms} that this can be achieved by choosing a smooth map
\begin{equation}
\Gamma_0\colon\mathscr U_0\times[0,1]\longrightarrow M
\end{equation}
that satisfies
\begin{equation}
\Gamma_0(m,n,0)=m\,,\qquad \Gamma_0(m,n,1)=n\,,\qquad \Gamma_0(m,m,t)=m\,,
\end{equation}
and defining $P^{\nabla,\Gamma_0}$ to be the $\nabla$-parallel transport along the curves $\Gamma_0(m,n,\cdot)$. Such a map can be obtained by choosing an auxiliary affine connection $\nabla_0$, and a representative of the corresponding germ of $\nabla_0$-geodesic interpolations. We denote by  $P^{\nabla,\nabla_0}$ the resulting parallelism.

\begin{definition}\label{Def: parallel geodesic propagator}
Let $M$ be a smooth manifold endowed with affine connections $\nabla$, and consider another affine connection $\nabla_0$ on $M$. The $\nabla$-\underline{parallel transport along} $\nabla_0$-\underline{geodesics} on a strongly $\nabla_0$-convex neighbourhood $\mathscr U_0$ of $\Delta_M$ is the parallelism $P^{\nabla,\nabla_0}$ defined, for every $(m,n)\in\mathscr U_0$, by
\begin{equation}
P^{\nabla,\nabla_0}_{(m,n)}=P_{1\leftarrow0}^{\nabla,\gamma^{(0)}_{n\leftarrow m}}\colon\T_mM\longrightarrow\T_nM\,,
\end{equation}
where $\gamma^{(0)}_{n\leftarrow m}$ is the unique $\nabla_0$-geodesic segment joining $m$ to $n$ selected by the corresponding $\nabla_0$-geodesic interpolation.
\end{definition}

Notice that, since the germ of the $\nabla_0$-geodesic interpolation along $\Delta_M\times[0,1]$ depends only on $\nabla_0$, the germ of $P^{\nabla,\nabla_0}$ along $\Delta_M$ is determined by the pair $(\nabla,\nabla_0)$.

Pairing the parallelism $P^{\nabla,\nabla_0}$ to the metric $g$ as in \eqref{Eq: canonical form} yields a solution to the inverse problem.

\begin{proposition}\label{Thm: inverse problem}
Let $(M,g,\nabla)$ be a Lauritzen manifold, let $\nabla_0$ be an affine connection on $M$, and let $P^{\nabla,\nabla_0}$ be the $\nabla$-parallel transport along $\nabla_0$-geodesics defined on a strongly $\nabla_0$-convex neighbourhood $\mathscr U_0$ of $\Delta_M$. Then the assignment
\begin{equation}\label{Eq: solution bi-form}
\varpi^{\nabla,\nabla_0}=\varpi^{g,P^{\nabla,\nabla_0}}
\end{equation}
defines an integrating bi-form for $(M,g,\nabla)$ on $\mathscr U_0$. 
\end{proposition}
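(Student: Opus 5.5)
The plan is to reduce the statement to two facts already established in the paper: pairing a metric with a parallelism produces a contrast bi-form inducing $(M,g,\nabla^P)$, and $\nabla$-parallel transport along a smooth family of curves that are constant on the diagonal recovers $\nabla$.

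First, I will check that $P^{\nabla,\nabla_0}$ really is a parallelism on $\mathscr U_0$. By the discussion in \cref{Section: strongly convex coverings}, the glued $\nabla_0$-geodesic interpolation $\Gamma_0=\Gamma^{\mathcal C}\colon\mathscr U_0\times[0,1]\to M$ is smooth. It satisfies
\begin{equation}
\Gamma_0(m,n,0)=m\,,\qquad \Gamma_0(m,n,1)=n\,,\qquad \Gamma_0(c,c,\cdot)=c\,.
\end{equation}
The last identity holds because the unique $\nabla_0$-geodesic segment in a convex set joining $c$ to itself is the constant one. Hence $P^{\nabla,\nabla_0}=P^{\nabla,\Gamma_0}$ in the notation of \cref{Section: parallelisms}. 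Each $P_{(m,n)}$ is a linear isomorphism, and $P_{(c,c)}=\id$ because parallel transport along a constant curve is the identity.

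The one point needing some care is smoothness of $(m,n)\mapsto P_{(m,n)}$ as a section of the Hom-bundle. I would argue it locally. Cover a neighbourhood of a given pair by finitely many charts containing the curves $\Gamma_0(m,n,[0,1])$ for $(m,n)$ near that pair. In each chart, parallel transport solves a linear ODE whose coefficients are Christoffel symbols evaluated along $\Gamma_0(m,n,t)$, multiplied by $\partial_t\Gamma_0$. These coefficients depend smoothly on the parameters $(m,n)$, so smooth dependence of ODE solutions on parameters gives smoothness of $P$.

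Next, the computation following \cref{Lemma: variation of parallel vector fields} gives $\nabla^{P^{\nabla,\Gamma_0}}=\nabla$. It only uses that $\Gamma_0(\delta(t),m,\cdot)$ is a smooth two-parameter map, constant at $t=0$, so that the curvature term vanishes.

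Finally, the general computation for $\varpi^{g,P}$ shows two things. Its diagonal restriction is $g$, by the normalization $P_{(m,m)}=\id$, so it is a contrast bi-form. Its induced connection is $\nabla^{\varpi^{g,P}}=\nabla^P$. Combining these, $\varpi^{\nabla,\nabla_0}$ is defined and smooth on $\mathscr U_0$, with $g^{\varpi}=g$ and $\nabla^{\varpi}=\nabla$, as required.

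I expect the smoothness verification to be the main, though routine, obstacle. It relies on the smoothness of $\Gamma^{\mathcal C}$ and on standard parameter dependence of linear ODEs. The rest is direct assembly of results already proved.
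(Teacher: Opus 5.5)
Your proposal is correct and follows the paper's own argument. The paper likewise combines $\nabla^{P^{\nabla,\Gamma_0}}=\nabla$ (from the variational lemma, where the curve at $t=0$ is constant) with the general fact that $\varpi^{g,P}$ induces $(M,g,\nabla^P)$, taking $\Gamma_0$ to be the glued $\nabla_0$-geodesic interpolation; your explicit smoothness check via parameter dependence of linear ODEs spells out a point the paper only asserts as following from well-known properties of parallel transport.
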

Notice that the germ of $\varpi^{\nabla,\nabla_0}$ along $\Delta_M$ depends only on $g,\nabla$ and $\nabla_0$.

In the rest of this section, we study further properties of these solution bi-forms.

\begin{proposition}\label{Prop: compatibility test}
Let $(M,g,\nabla)$ be a Lauritzen manifold, and let $\nabla_0$ be an affine connection on $M$. It is 
\begin{equation}\label{Eq: duality bi-forms}
   \left(\varpi^{\nabla,\nabla_0}\right)^\dag=\varpi^{\nabla^\dag,\nabla_0}\,.
\end{equation}
In particular, $\nabla$ is $g$-compatible if and only if $\varpi^{\nabla,\nabla_0}$ is self-dual.
\end{proposition}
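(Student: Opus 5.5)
The plan is to compare the two bi-forms pointwise on the strongly $\nabla_0$-convex neighbourhood $\mathscr U_0=\bigcup_{C\in\mathcal C}(C\times C)$. This set is invariant under the flip map $\dagger$, so both sides of \eqref{Eq: duality bi-forms} are defined on the same domain. Fix $(m,n)\in\mathscr U_0$, $v\in\T_mM$ and $w\in\T_nM$. By \eqref{Eq: dual bi-form} and \eqref{Eq: canonical form},
\begin{equation}
\left(\varpi^{\nabla,\nabla_0}\right)^\dag_{(m,n)}(v\mid w)=\varpi^{\nabla,\nabla_0}_{(n,m)}(w\mid v)=g_m\left(P^{\nabla,\nabla_0}_{(n,m)}w,v\right)\,,
\end{equation}
whereas
\begin{equation}
\varpi^{\nabla^\dag,\nabla_0}_{(m,n)}(v\mid w)=g_n\left(P^{\nabla^\dag,\nabla_0}_{(m,n)}v,w\right)\,.
\end{equation}
So we must show $g_m\bigl(P^{\nabla,\nabla_0}_{(n,m)}w,v\bigr)=g_n\bigl(P^{\nabla^\dag,\nabla_0}_{(m,n)}v,w\bigr)$.

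The first step is geometric. We show that $\gamma^{(0)}_{m\leftarrow n}(t)=\gamma^{(0)}_{n\leftarrow m}(1-t)$. Choose $C\in\mathcal C$ containing $m$ and $n$. The curve $t\mapsto\gamma^{(0)}_{n\leftarrow m}(1-t)$ is an affine reparametrization of a $\nabla_0$-geodesic, hence again a $\nabla_0$-geodesic. It lies in $C$ and joins $n$ to $m$. By uniqueness of geodesic segments in the $\nabla_0$-convex set $C$, and by the independence of $\Gamma^{\mathcal C}$ from the choice of $C$, it coincides with $\gamma^{(0)}_{m\leftarrow n}$. Parallel transport along a reversed curve is the inverse of parallel transport along the original curve. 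Writing $\gamma=\gamma^{(0)}_{n\leftarrow m}$, this gives
\begin{equation}
P^{\nabla,\nabla_0}_{(n,m)}=\mathscr P^{\nabla,\gamma}_{0\leftarrow1}=\left(\mathscr P^{\nabla,\gamma}_{1\leftarrow0}\right)^{-1}.
\end{equation}

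The second step is analytic: $\nabla$- and $\nabla^\dag$-parallel transports along the same curve are $g$-adjoint. Let $E$ be $\nabla$-parallel and $F$ be $\nabla^\dag$-parallel along $\gamma$. The compatibility condition \eqref{Eq: compatibility condition}, written along a curve as $\frac{\mathrm d}{\mathrm dt}g(E,F)=g(\mathrm D^{\nabla}_tE,F)+g(E,\mathrm D^{\nabla^\dag}_tF)$, shows that $g(E,F)$ is constant along $\gamma$. Take $E$ with $E(1)=w$, so that $E(0)=\mathscr P^{\nabla,\gamma}_{0\leftarrow1}w$, and take $F$ with $F(0)=v$, so that $F(1)=\mathscr P^{\nabla^\dag,\gamma}_{1\leftarrow0}v$. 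Comparing $g(E,F)$ at $t=0$ and $t=1$ yields exactly the required identity.

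The main point needing care is the along-curve version of \eqref{Eq: compatibility condition}, since that identity is stated only for vector fields on $M$. We would extend $E$ and $F$ locally, or argue in a local frame along $\gamma$, using that covariant derivatives along curves are determined by the connection. The self-duality statement then follows directly. If $\nabla=\nabla^\dag$, then \eqref{Eq: duality bi-forms} says $\varpi^{\nabla,\nabla_0}$ is self-dual. Conversely, suppose $\varpi^{\nabla,\nabla_0}=(\varpi^{\nabla,\nabla_0})^\dag=\varpi^{\nabla^\dag,\nabla_0}$. By \cref{Thm: inverse problem}, applied to $(M,g,\nabla)$ and to $(M,g,\nabla^\dag)$, the common bi-form induces both $\nabla$ and $\nabla^\dag$. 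Hence $\nabla=\nabla^\dag$, so $\nabla$ is $g$-compatible.
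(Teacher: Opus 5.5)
Your proof is correct and follows essentially the paper's route. You reduce to a pointwise identity on the flip-invariant set $\mathscr U_0$, identify the reversed $\nabla_0$-geodesic with $\gamma^{(0)}_{m\leftarrow n}$ (which the paper uses tacitly and you justify), and use that $\nabla$- and $\nabla^\dag$-parallel transports along the same curve are $g$-adjoint, which the paper cites from Nomizu--Simon and you derive from the constancy of $g(E,F)$. The only minor difference is in the converse: you get $\nabla=\nabla^\dag$ by applying \cref{Thm: inverse problem} to both $(M,g,\nabla)$ and $(M,g,\nabla^\dag)$, whereas the paper invokes the duality relations $g^{\varpi^\dag}=g^\varpi$, $\nabla^{\varpi^\dag}=(\nabla^\varpi)^\dag$; both arguments are valid.
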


\begin{proof}
The claim is equivalent to
\begin{equation}
    g_n\left(P^{\nabla,\gamma_{n\leftarrow m}^{(0)}}_{1\leftarrow 0}X_m,Y_n\right)
    =
    g_m\left(X_m,P^{\nabla^\dag,\gamma_{m\leftarrow n}^{(0)}}_{1\leftarrow 0}Y_n\right)\,,
\end{equation}
for any $(m,n)\in \mathscr U_0$ and $X,Y\in \mathfrak X(M)$. This identity follows from the parallel-transport formulation of the $g$-compatibility relation between $\nabla$ and $\nabla^\dag$ \cite{NomizuSimon1992}, that is
\begin{equation}
g_{\gamma(1)}\left(P^{\nabla,\gamma}_{1\leftarrow0}v,w\right)
=
g_{\gamma(0)}\left(v,P^{\nabla^\dag,\gamma}_{0\leftarrow1}w\right)\,,
\end{equation}
applied to $\gamma=\gamma_{n\leftarrow m}^{(0)}$, together with
\begin{equation}
P^{\nabla^\dag,\gamma_{n\leftarrow m}^{(0)}}_{0\leftarrow1}
= 
P^{\nabla^\dag,\gamma_{m\leftarrow n}^{(0)}}_{1\leftarrow0}\,.
\end{equation}
The claim follows from the behaviour of the induced geometric data under duality (see \cite{CMPSZ2026}). Indeed, a bi-form $\varpi$ is a contrast bi-form if and only if $\varpi^\dag$ is a contrast bi-form, and
\begin{equation}\label{Eq: duality relations}
g^{\varpi^\dag}=g^\varpi\,,\qquad 
\nabla^{\varpi^\dag}=\left(\nabla^\varpi\right)^\dag\,:
\end{equation}
this means that $\varpi^{\nabla,\nabla_0}$ is self-dual if and only if the affine connection it induces is $g$-compatible, that is, if and only if $\nabla=\nabla^\dag$.
\end{proof}

When the connection of a given Lauritzen manifold $(M,g,\nabla)$ has vanishing curvature, the construction recovers the solution bi-form written in \cite{CMPSZ2026}. Recall that, if $\nabla$ is curvature-free, then $\nabla$-parallel transport along a curve depends only on its homotopy class relative to the endpoints. Now let $(m,n)$ belong to a strongly $\nabla_0$-convex neighbourhood $\mathscr U_0$ of the diagonal. By construction, there exists a $\nabla_0$-convex open $C_0$ containing $m$ and $n$, and the unique $\nabla_0$-geodesic segment $\gamma_{n\leftarrow m}^{(0)}$ is contained in $C_0$. Since $C_0$ is contractible, any two curves in $C_0$ joining $m$ to $n$ are homotopic relative to the endpoints. %Hence the $\nabla$-parallel transport along $\gamma_{n\leftarrow m}^{(0)}$ depends only on $\nabla$ and on the pair $(m,n)$. %, and not on the auxiliary connection $\nabla_0$.
Accordingly, the germ of $P^{\nabla,\nabla_0}$ along $\Delta_M$ depends only on $\nabla$, and we write $P^{\nabla,\nabla_0}=P^\nabla$ and $\varpi^{\nabla,\nabla_0}=\varpi^\nabla$.

\begin{proposition}\label{Prop: dually curvature-free}
Let $(M,g,\nabla)$ be a dually curvature-free Lauritzen manifold, and let $\nabla_0$ be an affine connection on $M$. Let $\varpi^\nabla$ be the solution bi-form associated to the parallelism $P^\nabla$ defined on a strongly $\nabla_0$-convex neighbourhood $\mathscr U_0$ of $\Delta_M$. Then, for every $X,X_0,X_1,Y,Y_0,Y_1\in\mathfrak X(M)$ and every $(m,n)\in \mathscr U_0$, one has
\begin{align}
\mathrm d^L\varpi^\nabla(X_0,X_1\mid Y)(m,n)&=g_n\left(P^\nabla_{(m,n)}\left(\Tor^\nabla(X_0,X_1)\right)_m,Y_n\right)\,,\label{Eq: exact dLvarpi}\\
\mathrm d^R\varpi^\nabla(X\mid Y_0,Y_1)(m,n)&=g_m\left(X_m,P^{\nabla^\dag}_{(n,m)}\left(\Tor^{\nabla^\dag}(Y_0,Y_1)\right)_n\right)\,.\label{Eq: exact dRvarpi}
\end{align}
\end{proposition}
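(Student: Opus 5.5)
The plan is to exploit multiplicativity of $P^\nabla$ to write $\varpi^\nabla$ in terms of a local parallel frame, so that $\mathrm d^L$ only hits the coefficients, and the torsion appears through the Lie bracket. Since $\nabla$ is curvature-free, parallel transport inside a $\nabla_0$-convex set $C_0$ is path independent. Hence, for $m,n,c\in C_0$, we have $P^\nabla_{(c,n)}\circ P^\nabla_{(m,c)}=P^\nabla_{(m,n)}$. The same holds for $\nabla^\dag$, which is curvature-free by \eqref{Eq: dual curvature relation}.

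Both sides of \eqref{Eq: exact dLvarpi} are tensorial in $X_0,X_1,Y$, and the identity is pointwise. Fix $(m,n)\in C_0\times C_0$ and a base point $c\in C_0$. Choose a basis $\{v_a\}$ of $\T_cM$ and set $E_a(p)=P^\nabla_{(c,p)}v_a$. This is a $\nabla$-parallel frame on $C_0$, as in the proof of \cref{Prop: curvature-freeness criterion}. Let $\{\theta^a\}$ be its dual coframe. Multiplicativity gives $P^\nabla_{(m,n)}E_a(m)=P^\nabla_{(c,n)}P^\nabla_{(m,c)}P^\nabla_{(c,m)}v_a=E_a(n)$. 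Therefore, on $C_0\times C_0$,
\begin{equation}
\varpi^\nabla=\sum_{a}\theta^a\boxtimes\beta_a\,,\qquad \beta_a=g(E_a,\cdot\,)\in\Omega^1(C_0)\,.
\end{equation}
Since $\mathrm d^L(\alpha\boxtimes\beta)=\mathrm d\alpha\boxtimes\beta$, we get $\mathrm d^L\varpi^\nabla=\sum_a\mathrm d\theta^a\boxtimes\beta_a$.

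Next I compute $\mathrm d\theta^a$. Because $\nabla E_a=0$, the torsion is $\Tor^\nabla(E_a,E_b)=-[E_a,E_b]$. This gives $\mathrm d\theta^a(E_b,E_c)=-\theta^a([E_b,E_c])=\theta^a(\Tor^\nabla(E_b,E_c))$, and so $\mathrm d\theta^a=\theta^a\circ\Tor^\nabla$ by tensoriality. Substituting,
\begin{equation}
\mathrm d^L\varpi^\nabla(X_0,X_1\mid Y)(m,n)=\sum_a\theta^a_m\big(\Tor^\nabla(X_0,X_1)_m\big)\,g_n(E_a(n),Y_n)\,.
\end{equation}
This equals $g_n\big(P^\nabla_{(m,n)}\Tor^\nabla(X_0,X_1)_m,Y_n\big)$, because $P^\nabla_{(m,n)}$ maps $E_a(m)$ to $E_a(n)$.

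For \eqref{Eq: exact dRvarpi} I would use \cref{Prop: compatibility test}, which gives $(\varpi^{\nabla})^\dag=\varpi^{\nabla^\dag}$. The flip intertwines $\mathrm d^R$ with $\mathrm d^L$, that is $(\mathrm d^R\varpi)^\dag=\mathrm d^L(\varpi^\dag)$. Applying the first identity to the dually curvature-free connection $\nabla^\dag$ at the point $(n,m)$ then yields $g_m\big(P^{\nabla^\dag}_{(n,m)}\Tor^{\nabla^\dag}(Y_0,Y_1)_n,X_m\big)$, which is the claim. Alternatively, one can repeat the frame argument with the $\nabla^\dag$-parallel dual frame.

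The main obstacle is the domain bookkeeping. Multiplicativity must hold within a single convex set $C_0$, including a point $c$ common to $m$ and $n$. This should follow from the contractibility of $C_0$ together with curvature-freeness, but one must check that the three relevant $\nabla_0$-geodesics all lie in $C_0$, which convexity guarantees. One should also check that the sign conventions of $\mathrm d^L$ and of the torsion match; I expect the Cartan formula above to confirm them.
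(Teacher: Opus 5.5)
Your proof is correct, but it organizes the computation differently from the paper. Both arguments rest on the same input: multiplicativity $P^\nabla_{(m,n)}\circ P^\nabla_{(c,m)}=P^\nabla_{(c,n)}$ inside a $\nabla_0$-convex $C_0$.

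The paper uses this input infinitesimally and without frames. Taking $c=\delta(t)$ on a curve through $m$, it obtains $\bigl(\mathcal L_{Z^L}\varpi^\nabla(X\mid Y)\bigr)(m,n)=g_n\bigl(P^\nabla_{(m,n)}(\nabla_ZX)_m,Y_n\bigr)$ at every point of $\mathscr U_0$. The torsion then appears by antisymmetrizing in the definition of $\mathrm d^L$, and the second identity is only sketched via duality.

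You instead fix the base point once, which separates the variables: $\varpi^\nabla=\sum_a\theta^a\boxtimes\beta_a$ on $C_0\times C_0$. Now $\mathrm d^L$ is the ordinary exterior derivative, and the torsion enters through the structure equation $\mathrm d\theta^a=\theta^a\circ\Tor^\nabla$ of a parallel coframe.

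This costs you the (easy) construction of the parallel frame, but it gives more:
\begin{itemize}
\item Since $\beta_a=g(E_a,\cdot\,)$ is a $\nabla^\dag$-parallel coframe, the same decomposition yields $\mathrm d^R\varpi^\nabla=\sum_a\theta^a\boxtimes\mathrm d\beta_a$ with $\mathrm d\beta_a=\beta_a\circ\Tor^{\nabla^\dag}$. So both identities follow from one formula. Duality of parallel transports is needed only to rewrite $g_n\bigl(P^\nabla_{(m,n)}X_m,\cdot\,\bigr)$ as $g_m\bigl(X_m,P^{\nabla^\dag}_{(n,m)}\cdot\,\bigr)$.
\item When $\nabla$ is also torsion-free, $\theta^a=\mathrm d f^a$ on $C_0$ for $\nabla$-affine coordinates $f^a$. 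This exhibits $\sum_a f^a\boxtimes\beta_a$ as an explicit left primitive of $\varpi^\nabla$.
\end{itemize}

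Your flip argument via $(\mathrm d^R\varpi)^\dag=\mathrm d^L(\varpi^\dag)$ and $(\varpi^{\nabla,\nabla_0})^\dag=\varpi^{\nabla^\dag,\nabla_0}$ is also valid. It is a more explicit version of what the paper only indicates.
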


\begin{proof}
Let $\mathcal C_0$ be a strongly $\nabla_0$-convex cover whose associated strongly convex neighbourhood is $\mathscr U_0$, and fix $(m,n)\in\mathscr U_0$. Choose $C_0\in\mathcal C_0$ containing $m$ and $n$. Since $\nabla$ is curvature-free and $C_0$ is contractible, for every $c\in C_0$ one has
\begin{equation}
P^\nabla_{(m,n)}\circ P^\nabla_{(c,m)}=P^\nabla_{(c,n)}\,.
\end{equation}
Therefore,
\begin{equation}
\left(\mathcal L_{Z^L}\left(\varpi^\nabla(X\mid Y)\right)\right)(m,n)=g_n\left(P^\nabla_{(m,n)}\left(\nabla_ZX\right)_m,Y_n\right)\,.
\end{equation}
By the definition of the left exterior derivative,
\begin{align}
\mathrm d^L\varpi^\nabla(X_0,X_1\mid Y)(m,n)&=g_n\left(P^\nabla_{(m,n)}\left(\nabla_{X_0}X_1-\nabla_{X_1}X_0-[X_0,X_1]\right)_m,Y_n\right)\notag\\
&=g_n\left(P^\nabla_{(m,n)}\left(\Tor^\nabla(X_0,X_1)\right)_m,Y_n\right)\,,
\end{align}
which proves \eqref{Eq: exact dLvarpi}. 
The identity \eqref{Eq: exact dRvarpi} follows by the same argument applied in the case of  the curvature-free conjugate connection $\nabla^\dag$ and by \eqref{Eq: dual bi-form}.
\end{proof}

Once an integrating bi-form has been constructed, the next step is to understand whether it admits further integration to a pre-contrast tensor or a contrast function. As shown by \cref{Prop: torsion bi-form}, this depends precisely on the torsion properties of the affine connections involved. In the next lines we make these constructions explicit for the canonical solution bi-forms \eqref{Eq: solution bi-form}, thereby recovering standard formulas proposed in the literature for pre-contrast and contrast functions.
\subsubsection{The Inverse Problem for SMATs}\label{Sec: inverse problem SMAT}
Let $(M,g,\nabla)$ be a SMAT, so that $\nabla$ is torsion-free, and let $\varpi^{\nabla,\nabla_0}$ be the solution bi-form associated to an auxiliary affine connection $\nabla_0$ on $M$. By \cref{Thm: integration of a contrast bi-form}, $\varpi^{\nabla,\nabla_0}$ is statistically equivalent to a left-exact bi-form of the form
\begin{equation}
\mathrm d^L S\,, \qquad S=J_L\varpi^{\nabla,\nabla_0}\,,
\end{equation}
where $J_L$ is  a statistical left homotopy operator. To obtain an explicit expression, we choose a second auxiliary affine connection $\nabla_1$ and use the corresponding operator $J_L^{\nabla_1}$ (cf. \eqref{Eq: explicit statistical left homotopy operator}).

Specializing the general formula \eqref{Eq: explicit statistical left homotopy operator} to $(1,1)$-bi-forms, let $\mathscr U_1$ be a strongly $\nabla_1$-convex neighbourhood of $\Delta_M$, and let $\Gamma_1$ be the corresponding $\nabla_1$-geodesic interpolation. Then, for every $(1,1)$-bi-form $\varpi$, one has
\begin{equation}\label{Eq: JL example refined}
\left(J_L^{\nabla_1}\varpi\right)(\mid Y)(m,n)=\int_0^1\varpi\left(\dot{\gamma}^{(1)}_{m\leftarrow n}(t)\mid Y\right)\left(\gamma^{(1)}_{m\leftarrow n}(t),n\right)\,\mathrm dt\,,
\end{equation}
 where we have denoted 
$\gamma_{m\leftarrow n}^{(1)}(t)=\Gamma_1(n,m,t)$. The following result provides a family of local pre-contrast functions integrating the given SMAT.
\begin{proposition}\label{Thm: canonical pre-contrasts refined}
Let $(M,g,\nabla)$ be a SMAT, let $\nabla_0$ and $\nabla_1$ be affine connections on $M$, and let $\varpi^{\nabla,\nabla_0}$ be defined on a strongly $\nabla_0$-convex neighbourhood $\mathscr U_0$ of $\Delta_M$. There exists a strongly $\nabla_1$-convex neighbourhood $\mathscr U_{01}\subseteq\mathscr U_0$ of $\Delta_M$ such that $\varpi^{\nabla,\nabla_0}$ is statistically equivalent on $\mathscr U_{01}$ to the bi-form
\begin{equation}
\mathrm d^L S^{\nabla,\nabla_0,\nabla_1}\,,
\end{equation}
where
\begin{equation}
S^{\nabla,\nabla_0,\nabla_1}=J_L^{\nabla_1}\varpi^{\nabla,\nabla_0}\,.
\end{equation}
Explicitly, it is 
\begin{equation}\label{Eq: canonical pre-contrast general refined}
S^{\nabla,\nabla_0,\nabla_1}(\mid Y)(m,n)=\int_0^1g_n\left(P^{\nabla,\nabla_0}_{\left(\gamma^{(1)}_{m\leftarrow n}(t),n\right)}\dot{\gamma}^{(1)}_{m\leftarrow n}(t),Y_n\right)\,\mathrm dt\,.
\end{equation}
In particular, for $\nabla_0=\nabla_1=\nabla$, one obtains
\begin{equation}
S^\nabla=S^{\nabla,\nabla,\nabla}\,,
\end{equation}
with
\begin{equation}\label{Eq: canonical pre-contrast refined}
S^\nabla(\mid Y)(m,n)=g_n\left(\dot{\gamma}_{m\leftarrow n}(0),Y_n\right)\,.
\end{equation}
%where $\gamma_{m\leftarrow n}$ is the unique $\nabla$-geodesic segment from $n$ to $m$ determined by the strongly $\nabla$-convex neighbourhood.
\end{proposition}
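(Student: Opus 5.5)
The plan is to combine \cref{Thm: inverse problem} with \cref{Thm: integration of a contrast bi-form}(a), using the explicit statistical homotopy operator $J_L^{\nabla_1}$. The explicit formulas then follow by unwinding definitions, and the special case by a parallel-transport argument along a single geodesic.

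First I would fix the domain. The operator $J_L^{\nabla_1}$ is defined on a strongly $\nabla_1$-convex neighbourhood $\mathscr U_1$. The homotopy identities \eqref{Eq: homotopy formula bi-forms L} hold only on some smaller neighbourhood. Since strongly $\nabla_1$-convex neighbourhoods form a basis of neighbourhoods of $\Delta_M$, I would choose a strongly $\nabla_1$-convex $\mathscr U_{01}$ contained in $\mathscr U_0$ and in the domain where the homotopy identity holds. It must also satisfy the following: for $(m,n)\in\mathscr U_{01}$, the whole pair $(\gamma^{(1)}_{m\leftarrow n}(t),n)$ lies in $\mathscr U_0$. This holds if every $C\in\mathcal C_1$ satisfies $C\times C\subseteq\mathscr U_0$, because $\gamma^{(1)}_{m\leftarrow n}$ stays in $C$. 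Such a refinement exists by \cite[Proposition~9]{Moretti-2021}, applied to a cover whose squares lie in $\mathscr U_0$.

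Second, $(M,g,\nabla)$ is a SMAT, and $\varpi^{\nabla,\nabla_0}$ integrates it by \cref{Thm: inverse problem}. Since $(J_L^{\nabla_1},J_R^{\nabla_2})$ is a system of statistical homotopy operators, \cref{Thm: integration of a contrast bi-form}(a) gives that $\varpi^{\nabla,\nabla_0}$ is statistically equivalent to $\mathrm d^L J_L^{\nabla_1}\varpi^{\nabla,\nabla_0}$ on $\mathscr U_{01}$. Formula \eqref{Eq: canonical pre-contrast general refined} then follows by substituting the definition \eqref{Eq: canonical form} into \eqref{Eq: JL example refined}.

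Third, the special case $\nabla_0=\nabla_1=\nabla$. Write $\gamma=\gamma_{m\leftarrow n}$, the $\nabla$-geodesic from $n$ to $m$. By uniqueness in the convex set, the $\nabla$-geodesic from $\gamma(t)$ to $n$ is the reversed restriction $s\mapsto\gamma((1-s)t)$. Hence $P^{\nabla,\nabla}_{(\gamma(t),n)}=\mathscr P^{\nabla,\gamma}_{0\leftarrow t}$.

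Because $\gamma$ is a $\nabla$-geodesic, $\dot\gamma$ is $\nabla$-parallel along $\gamma$, so $\mathscr P^{\nabla,\gamma}_{0\leftarrow t}\dot\gamma(t)=\dot\gamma(0)$. The integrand in \eqref{Eq: canonical pre-contrast general refined} is therefore constant, equal to $g_n(\dot\gamma(0),Y_n)$, and integrating over $[0,1]$ gives \eqref{Eq: canonical pre-contrast refined}.

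I expect the main obstacle to be the bookkeeping of domains in the first step. The difficulty is ensuring that the intermediate pairs stay inside $\mathscr U_0$, so that $P^{\nabla,\nabla_0}$ is evaluated where it is defined. A secondary point is the identification of restricted geodesics with those selected by the interpolation $\Gamma_0$. This uses the strong convexity gluing argument of \cref{Section: strongly convex coverings}: both curves lie in a common convex set $C$, so they coincide.
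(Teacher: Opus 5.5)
Your proposal is correct and follows essentially the same route as the paper: apply \cref{Thm: integration of a contrast bi-form}(a) with the explicit operator $J_L^{\nabla_1}$, substitute \eqref{Eq: canonical form} into \eqref{Eq: JL example refined}, and, when $\nabla_0=\nabla_1=\nabla$, use that $\dot\gamma_{m\leftarrow n}$ is $\nabla$-parallel so that the integrand is constant. The domain bookkeeping and the identification $P^{\nabla,\nabla}_{(\gamma(t),n)}=\mathscr P^{\nabla,\gamma}_{0\leftarrow t}$ are details the paper leaves implicit; for the identification, your ``common convex set'' argument works directly if you take $\mathcal C_1$ to be a strongly $\nabla$-convex refinement of $\mathcal C_0$ itself, so that every $C\in\mathcal C_1$ lies in some element of $\mathcal C_0$, which is slightly stronger than only requiring $C\times C\subseteq\mathscr U_0$.
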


\begin{proof}
Since $(M,g,\nabla)$ is a SMAT, the connection $\nabla$ is torsion-free. 
By \cref{Thm: integration of a contrast bi-form},  $\varpi^{\nabla,\nabla_0}$ is statistically equivalent to the bi-form
$\mathrm d^LS^{\nabla,\nabla_0,\nabla_1}$ 
in a strongly $\nabla_1$-convex neighbourhood $\mathscr U_{01}\subseteq \mathscr U_0$.
To obtain the explicit expression \eqref{Eq: canonical pre-contrast general refined}, let $Y\in\mathfrak X(M)$ and $(m,n)\in\mathscr U_{01}$. Applying \eqref{Eq: JL example refined} to $\varpi^{\nabla,\nabla_0}$ gives
\begin{equation}
S^{\nabla,\nabla_0,\nabla_1}(\mid Y)(m,n)=\int_0^1g_n\left(P^{\nabla,\nabla_0}_{\left(\gamma^{(1)}_{m\leftarrow n}(t),n\right)}\dot{\gamma}^{(1)}_{m\leftarrow n}(t),Y_n\right)\,\mathrm dt\,,
\end{equation}
which proves \eqref{Eq: canonical pre-contrast general refined}.

Suppose now that $\nabla_0=\nabla_1=\nabla$. Since $\gamma_{m\leftarrow n}=\gamma^{(1)}_{m\leftarrow n}$ is a $\nabla$-geodesic segment, its velocity field $\dot\gamma_{m\leftarrow n}(t)$ is $\nabla$-parallel, hence
\begin{equation}
 P^{\nabla,\nabla_0}_{\left(\gamma_{m\leftarrow n}(t),n\right)}\dot{\gamma}_{m\leftarrow n}(t)=\dot{\gamma}_{m\leftarrow n}(0)\,.
\end{equation}
Therefore,
\begin{equation}
S^\nabla(\mid Y)(m,n)=\int_0^1g_n\left(\dot{\gamma}_{m\leftarrow n}(0),Y_n\right)\,\mathrm dt=g_n\left(\dot{\gamma}_{m\leftarrow n}(0),Y_n\right)\,,
\end{equation}
which proves \eqref{Eq: canonical pre-contrast refined}.
\end{proof}

The previous  proposition shows that the expression for the pre-contrast function associated to \eqref{Eq: canonical pre-contrast refined} solves the inverse problem for SMATs, in agreement with the proposal of Henmi and Matsuzoe in \cite{H-M-2019}. 

\begin{example}
Assume that $(M,g,\nabla)$ is a partially flat SMAT, namely that $\nabla$ is flat. By \cref{Prop: dually curvature-free}, the solution bi-form $\varpi^\nabla$ is left-exact and there is a unique germ of a local $(0,1)$-bi-form $S$, vanishing along the diagonal, such that
\begin{equation}
    \varpi^\nabla=\mathrm d^L S\,.
\end{equation}
Therefore, $S$ coincides with \eqref{Eq: canonical pre-contrast refined}, and the associated pre-contrast function is
\begin{equation}
    \dot S^\nabla(m\mid n,w)
    =g_n\left(\dot\gamma_{m\leftarrow n}(0),w\right)\,,
\end{equation}
where $\gamma_{m\leftarrow n}$ is the $\nabla$-geodesic from $n$ to $m$ within a strongly $\nabla$-convex neighbourhood. This coincides with the canonical pre-contrast function of Henmi and Matsuzoe in the partially flat case \eqref{Eq: HM}, modulo our conventions (\cref{Remark: smat}, \cref{Remark: sign}).
\end{example}

\subsubsection{The inverse problem for statistical manifolds}\label{Sec: inverse problem statistical manifolds}
We now turn to the statistical-manifold case. Let $(M,g,\nabla)$ be a statistical manifold, so that both $\nabla$ and $\nabla^\dag$ are torsion-free, and let $\varpi^{\nabla,\nabla_0}$ be the solution bi-form associated to an auxiliary affine connection $\nabla_0$ on $M$. By \cref{Thm: integration of a contrast bi-form}, the bi-form $\varpi^{\nabla,\nabla_0}$ is statistically equivalent to a bi-exact bi-form which we write as 
\begin{equation}
    \mathrm d^L\mathrm d^RD\,, \qquad D=J_RJ_L\varpi^{\nabla,\nabla_0}\,,
\end{equation}
where $(J_L,J_R)$ is a pair of statistical homotopy operators for $(\mathrm d^L,\mathrm d^R)$. To obtain an explicit expression, we choose two more auxiliary affine connections $\nabla_1$, $\nabla_2$, and consider the corresponding operators $(J_L^{\nabla_1},J_R^{\nabla_2})$.

Specializing the general formula \eqref{Eq: explicit statistical right homotopy operator} to $(0,1)$-bi-forms, let $\mathscr U_2$ be a strongly $\nabla_2$-convex neighbourhood of $\Delta_M$, and let $\Gamma_2$ be the corresponding $\nabla_2$-geodesic interpolation. Then, for every $(0,1)$-bi-form $S$, one has
\begin{equation}\label{Eq: JR example refined}
\left(J_R^{\nabla_2}S\right)(m,n)=\int_0^1S\left(\mid\dot{\gamma}^{(2)}_{n\leftarrow m}(t)\right)\left(m,\gamma^{(2)}_{n\leftarrow m}(t)\right)\,\mathrm dt\,,
\end{equation}
where we denote 
$\gamma^{(2)}_{n\leftarrow m}(t)=\Gamma_2(m,n,t)\,$.

\begin{proposition}\label{Prop: potentials solution bi-form}
Let $(M,g,\nabla)$ be a statistical manifold, let $\varpi^{\nabla,\nabla_0}$ be the solution bi-form defined on a strongly $\nabla_0$-convex neighbourhood $\mathscr U_0$ of $\Delta_M$ in $M\times M$, and let $\nabla_1,\nabla_2$ be affine connections on $M$. There  exists an open neighbourhood $\mathscr U_{012}\subseteq \mathscr U_0$ of $\Delta_M$, strongly $\nabla_i$-convex for every $i\in\{0,1,2\}$, such that $\varpi^{\nabla,\nabla_0}$ is statistically equivalent to $\mathrm d^L\mathrm d^RD^{\nabla,\nabla_0,\nabla_1,\nabla_2},$
where
\begin{equation}
D^{\nabla,\nabla_0,\nabla_1,\nabla_2}=J_R^{\nabla_2}J_L^{\nabla_1}\varpi^{\nabla,\nabla_0}\,.
\end{equation}
In particular, 
\begin{enumerate}[(1)]
    \item if one selects $\nabla_0=\nabla_1=\nabla_2=\nabla$, then $D^\nabla=D^{\nabla,\nabla,\nabla,\nabla}$ is given by
 \begin{equation}\label{Eq: Ay-Amari}
D^\nabla(m,n)=-\int_0^1t\,g_{\gamma_{n\leftarrow m}(t)}\left(\dot\gamma_{n\leftarrow m}(t),\dot \gamma_{n\leftarrow m}(t)\right)
\,\mathrm dt\,,
\end{equation}
where %for any $(m,n)$ in a strongly $\nabla$-convex neighbourhood of $\Delta_M$ in $M\times M$, $\gamma_{n\leftarrow m}$ is the $\nabla$-geodesic segment joining $m$ with $n$, and $\|\cdot \|$ is the $g$-norm, and 
for any $(m,n)\in \mathscr U_{012}$, $\gamma_{n\leftarrow m}=\gamma_{n\leftarrow m}^{(i)}$;
\item if one selects $\nabla_0=\nabla_1=\nabla$ and $\nabla_2=\nabla^\dag$, then $D^{\nabla,\nabla^\dag}=D^{\nabla,\nabla,\nabla,\nabla^\dag}$ is given by
\begin{equation}\label{Eq: Hook law}
    D^{\nabla,\nabla^\dag}(m,n)=\int_0^1 g_{\gamma^\dag_{n\leftarrow m}(t)}\left(\frac{\partial}{\partial s}\bigg|_{s=0}H(t,s),\dot \gamma^\dag_{n\leftarrow m}(t)\right) \mathrm dt,
\end{equation}
%\blue{UNA PAROLA SUL FATTO CHE ABBIAMO IL SEGNO OPPOSTO RISPETTO A QUELLA DI HENMI-KOBAYASHI DELL'INTRODUZIONE LA DIREI} 
where for every $(m,n)\in \mathscr U_{012}$, $\gamma_{n\leftarrow m}=\gamma_{n\leftarrow m}^{(0)}=\gamma_{n\leftarrow m}^{(1)}$, $\gamma_{n\leftarrow m}^\dag=\gamma_{n\leftarrow m}^{(2)}$, $\Gamma=\Gamma_1$, and %for every $(m,n)$ in an open neighbourhood $\mathscr U$ of $\Delta_M$ in $M\times M$ that is both $\nabla$-convex and $\nabla^\dag$-convex, $\gamma^\dag_{n\leftarrow m}$ denotes the unique $\nabla^\dag$-geodesic segment joining $m$ with $n$, and:
\begin{equation}
    H(t,s)=\Gamma\left(\gamma^\dag_{n\leftarrow m}(t),m,s\right)
\end{equation}
%\blue{DUE RIGHE SOPRA COMPARE COME $H(s,t)$, UNIFORMEREI PERCHE' COSI' E' UN PO' BRUTTO. INOLTRE $\Gamma$ PRENDE IN PASTO DUE PUNTI E UN PARAMETRO, QUINDI IL SECONDDO ARGOMENTO NON DOVREBBE AVERE IL DOT.} $\Gamma$ denoting the $\nabla$-geodesic interpolation map defined on $\mathscr U$.
\end{enumerate}
\end{proposition}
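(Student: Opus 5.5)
The plan is to split the statement into a general part and two explicit computations. For the general part, $(M,g,\nabla)$ is a statistical manifold, and by \cref{Thm: inverse problem} the bi-form $\varpi^{\nabla,\nabla_0}$ integrates it. So $(M,g^{\varpi},\nabla^{\varpi})=(M,g,\nabla)$ is statistical, and \cref{Thm: integration of a contrast bi-form}(b) applies with the system of statistical homotopy operators $(J_L^{\nabla_1},J_R^{\nabla_2})$ from \cref{Sec: Homotopy operators on bi-forms}. This gives the statistical equivalence with $\mathrm d^L\mathrm d^R D^{\nabla,\nabla_0,\nabla_1,\nabla_2}$ on some neighbourhood of $\Delta_M$.

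What remains for the general part is to produce $\mathscr U_{012}$ on which all three geodesic interpolations are simultaneously well behaved. For this I would use that strongly $\nabla_i$-convex neighbourhoods form a basis of neighbourhoods of $\Delta_M$, and that the germ of each $\Gamma_i$ along $\Delta_M\times[0,1]$ depends only on $\nabla_i$. Shrinking, I would choose $\mathscr U_{012}$ so that:
\begin{enumerate}[(i)]
\item it is contained in $\mathscr U_0\cap\mathscr U_1\cap\mathscr U_2$;
\item for $(m,n)\in\mathscr U_{012}$ all intermediate pairs $\bigl(m,\gamma^{(2)}_{n\leftarrow m}(u)\bigr)$ and $\bigl(\gamma^{(1)}_{m\leftarrow c}(t),c\bigr)$ appearing in $J_R^{\nabla_2}J_L^{\nabla_1}\varpi$ remain in the domains where the homotopy identities and the definition of $P^{\nabla,\nabla_0}$ hold.
\end{enumerate}
Condition (ii) is achievable by a compactness/continuity argument on $[0,1]$, since these pairs collapse onto the diagonal when $m=n$. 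I expect this bookkeeping of domains to be the main (if technical) obstacle. In particular, one needs to know that a sub-arc of the unique geodesic segment in a convex set $C$ is again the unique segment in $C$ between its endpoints; this is immediate because sub-arcs stay inside $C$.

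For the explicit formulas, I would first write
\begin{equation}
D(m,n)=\int_0^1 S\bigl(\mid\dot\gamma^{(2)}_{n\leftarrow m}(u)\bigr)\bigl(m,\gamma^{(2)}_{n\leftarrow m}(u)\bigr)\,\mathrm du\,,\qquad S=J_L^{\nabla_1}\varpi^{\nabla,\nabla_0}\,,
\end{equation}
using \eqref{Eq: JR example refined}. In both cases $\nabla_0=\nabla_1=\nabla$, so \cref{Thm: canonical pre-contrasts refined} gives $S(\mid Y)(m,c)=g_c\bigl(\dot\gamma_{m\leftarrow c}(0),Y_c\bigr)$, where $\gamma_{m\leftarrow c}(s)=\Gamma(c,m,s)$.

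In case (1), $\nabla_2=\nabla$ and $c=\gamma_{n\leftarrow m}(u)$. By the uniqueness remark above, the $\nabla$-segment from $c$ to $m$ is the reversed, rescaled sub-arc $s\mapsto\gamma_{n\leftarrow m}\bigl(u(1-s)\bigr)$. Its initial velocity is $-u\,\dot\gamma_{n\leftarrow m}(u)$, and substituting gives \eqref{Eq: Ay-Amari}.

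In case (2), $\nabla_2=\nabla^\dag$ and $c=\gamma^\dag_{n\leftarrow m}(t)$. Then $\gamma_{m\leftarrow c}(s)=\Gamma(\gamma^\dag_{n\leftarrow m}(t),m,s)=H(t,s)$, so $\dot\gamma_{m\leftarrow c}(0)=\partial_s|_{s=0}H(t,s)$. Inserting this yields \eqref{Eq: Hook law} directly.
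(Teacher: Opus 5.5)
Your proposal is correct and follows essentially the paper's route: statistical equivalence comes from \cref{Thm: integration of a contrast bi-form}(b) applied with $(J_L^{\nabla_1},J_R^{\nabla_2})$, and both formulas come from writing $D=J_R^{\nabla_2}S^{\nabla,\nabla_0,\nabla_1}$, collapsing the inner integral because $\nabla$-geodesic velocities are $\nabla$-parallel, and using the affine reparametrization of $\nabla$-geodesic sub-arcs in case (1). The only differences are cosmetic: you quote the collapsed pre-contrast \eqref{Eq: canonical pre-contrast refined} directly, whereas the paper redoes the collapse on the double integral, and you treat the domain of $\mathscr U_{012}$ more carefully than the paper, which simply asserts it (though neither you nor the paper checks that one neighbourhood can be strongly $\nabla_i$-convex for all three connections simultaneously).
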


\begin{proof}
The first part follows from \cref{Thm: integration of a contrast bi-form}. Note that
\begin{equation}
    D^{\nabla,\nabla_0,\nabla_1,\nabla_2}=J_R^{\nabla_2}S^{\nabla,\nabla_0,\nabla_1}\,,
\end{equation}
where $S^{\nabla,\nabla_0,\nabla_1}$ is the local $(0,1)$-bi-form defined in \cref{Thm: canonical pre-contrasts refined}. For every $(m,n)\in\mathscr U_{012}$, using \eqref{Eq: JR example refined}, one obtains
\begin{equation}\label{Eq: general D expanded once}
\begin{aligned}
D^{\nabla,\nabla_0,\nabla_1,\nabla_2}(m,n)
&=\int_0^1\left(S^{\nabla,\nabla_0,\nabla_1}\right)_{(m,\gamma^{(2)}_{n\leftarrow m}(s))}\left(\mid \dot \gamma^{(2)}_{n\leftarrow m}(s)\right)\,\mathrm ds\\
&=\iint_{[0,1]^2}
g_{\gamma^{(2)}_{n\leftarrow m}(s)}
\Bigl(
P^{\nabla,\nabla_0}_{\left(H_{12}(t,s),\,\gamma^{(2)}_{n\leftarrow m}(s)\right)}
\frac{\partial}{\partial \tau}\bigg|_{\tau=t}H_{12}(\tau,s),
\dot\gamma^{(2)}_{n\leftarrow m}(s)
\Bigr)
\,\mathrm ds\,\mathrm dt\,,
\end{aligned}
\end{equation}
where
\begin{equation}
H_{12}(t,s)=\Gamma_1\left(\gamma^{(2)}_{n\leftarrow m}(s),m,t\right)
\end{equation}
and $\Gamma_1$ is the $\nabla_1$-geodesic interpolation.

Choosing $\nabla_0=\nabla_1=\nabla$ gives
\begin{align}
    D^{\nabla,\nabla_2}(m,n)%&=\iint_{[0,1]^2} g_{H_{12}(0,s)} \left(\frac{\partial}{\partial \tau}\bigg|_{\tau=0}H_{12}(\tau,s), \dot \gamma^{(2)}_{n\leftarrow m}(s)\right)\,\mathrm ds\,\mathrm dt\\
    &=\int_0^1 g_{\gamma^{(2)}_{n\leftarrow m}(s)}\left(\frac{\partial}{\partial \tau}\bigg|_{\tau=0}H_{12}(\tau,s),\dot \gamma_{n\leftarrow m}^{(2)}(s)\right)\,\mathrm ds \,. 
\end{align}
%\blue{L'APICE SULLE CURVE $\gamma$ FINO AD ORA L'HAI SEMPRE MESSO TRA PARENTESI. INOLTRE LA $g$ DOVREBBE ESSERE VALUTATA IN $\gamma(s)$ NON $\gamma(t)$.}
This gives in particular the result \eqref{Eq: Hook law} for $\nabla_2=\nabla^\dag$.

   Finally, let us assume that $\nabla_2=\nabla$. The claim follows by the affine reparametrization property of $\nabla$-geodesics, that is
    \begin{equation}
    \Gamma\left(\Gamma(a,b,t_1),\Gamma(a,b,t_2),t_0\right)= \Gamma\left(a,b,(1-t_0)\,t_1+t_0\,t_2\right)
\end{equation}
    for any $(a,b)\in \mathscr U$, and $t_0,t_1,t_2\in [0,1]$. Applying this to $a=m$, $b=n$, $t_0=\tau$, $t_1=s$, and $t_2=0$, one has 
    \begin{equation}
        H_{12}(\tau,s)=\gamma_{n\leftarrow m}\left((1-\tau)\,s\right)\,.
    \end{equation}
    By the chain rule, it is 
    \begin{equation}
        \frac{\partial}{\partial \tau}\bigg|_{\tau=0}H_{12}(\tau,s)=-s\, \dot \gamma_{n\leftarrow m}(s)\,,
    \end{equation}
    from which the formula \eqref{Eq: Ay-Amari} follows.
\end{proof}

The two previous choices recover, according to our sign convention, two important contrast functions defined for arbitrary statistical manifolds. The uniform choice 
$\nabla_0=\nabla_1=\nabla_2=\nabla$
yields the \emph{Ay-Amari canonical divergence} $D^\nabla$ \cite{A-A-2015}. By contrast, the choice
$\nabla_0=\nabla_1=\nabla$ with
$\nabla_2=\nabla^\dagger$
recovers the \emph{Henmi-Kobayashi contrast function} $D^{\nabla,\nabla^\dag}$ \cite{H-R-2000}. 

\begin{example}[Dually flat statistical manifold]
Assume that $(M,g,\nabla)$ is a dually flat statistical manifold. It follows that both $\nabla$ and $\nabla^\dag$ are flat,  the solution bi-form $\varpi^\nabla$ is bi-exact, and there is a unique germ of a local $(0,0)$-bi-form $D$, vanishing up to first order along the diagonal, such that
\begin{equation}
    \varpi^\nabla
    =
    \mathrm d^L\mathrm d^R D\,.
\end{equation}
This reads that $D$ coincides with the Ay-Amari canonical divergence \eqref{Eq: Ay-Amari}, up to the sign convention adopted here.
\end{example}

\begin{example}[Pseudo-Riemannian manifolds]
Any pseudo-Riemannian manifold $(M,g)$ induces a statistical manifold
$(M,g,\nabla^g)$, where $\nabla^g$ is the Levi-Civita connection of $g$. Since $\nabla^g$ is $g$-self-conjugate, the expressions \eqref{Eq: Ay-Amari} and \eqref{Eq: Hook law} coincide and reduce to the opposite of the Synge world function \cite{Synge-1960, Moretti-2021}. In the Riemannian case, according to our sign convention, this coincides with the restriction of the negative half squared Riemannian distance to a strongly $\nabla^g$-convex neighbourhood. %We also recall that in the Riemannian case, for every $c\in M$ there is $r>0$ such that the open Riemannian ball $B_r^g(c)$ is \emph{geodesically convex}, namely for any $m,n\in B_r^g(c)$ there is a unique minimizing $\nabla^g$-geodesic joining $m$ with $n$ within $B_r^g(c)$. Therefore $\{B_r^g(c)\mid c\in M\}$ is a strongly $\nabla^g$-convex cover of $M$.
\end{example}

\section{Reductive homogeneous metric-affine manifolds}
\label{Sec: examples}
It is time to describe the bi-form solution to the inverse problem in information geometry for a class of -- to us -- interesting cases. We consider homogeneous pseudo-Riemannian manifolds $M\cong G/G_o$ endowed with a one-parameter family of canonical affine connections $\nabla^{(\lambda)}$ associated to a reductive decomposition of the Lie algebra of $G$. As representative examples, we consider semisimple Lie groups \cite{Postnikov-2001}, including the special unitary group $\SU(d+1)$, and odd-dimensional spheres endowed with Berger metrics \cite{Berger1961, D-G-P-2016}. In the latter case, we also examine their relation with complex projective spaces through the Hopf fibration.

\subsection{The canonical affine connections on a reductive space}
The Nomizu correspondence provides an algebraic description of invariant pseudo-Riemannian metrics and invariant affine connections on reductive homogeneous spaces \cite{Nomizu-1954}. We begin by recalling it.

Let $M$ be a smooth manifold, and let  $\tau\colon G\times M\to M$ denote a smooth and transitive action on $M$ of a Lie group $G$, that we write as $\tau_z(m)=z\cdot m$ for $z\in\,G$ and $m\in M$. For an element  $o\in M$, let $G_o=\{z\in G\mid z\cdot o=o\}$ denote the corresponding isotropy subgroup, with  $\mathfrak g$ and $\mathfrak g_o$ the Lie algebras of $G$ and $G_o$, respectively. The homogeneous space $(M,G,\tau)$ is said to be \underline{reductive} if there exists a vector-space decomposition
\begin{equation}
    \mathfrak g=\mathfrak g_o\oplus\mathfrak m
\end{equation}
such that\footnote{Notice that we are considering the adjoint action of a Lie group $G$ on its Lie algebra $\mathfrak{g}$ to be defined in terms of the Lie group exponential map $\exp\,:\,\mathfrak g\,\to\,G$ by  $$\Ad_z(X)=\left.\frac{\dd}{\dd t}\right|_{t=0}z\,(\exp\,tX)\,z^{-1}$$ where $z\in G$ and $X\in\mathfrak{g}$.}  $\Ad_h\mathfrak m=\mathfrak m$ for $h\in G_o$. If $G_o$ is connected, such a condition turns out to be equivalent   to
\begin{equation}
    [\mathfrak g_o,\mathfrak m]\subseteq\mathfrak m\,.
\end{equation}
The action of $G$ on $M$ induces the so-called fundamental vector fields on $M$, which provide local generators for the module $\mathfrak{X}(M)$ of vector fields on $M$. For $X\in\mathfrak g$, we denote by $\tilde X$ the vector field on $M$ defined by
\begin{equation}
    \tilde X_m=\left.\frac{\mathrm d}{\mathrm dt}\right|_{t=0}(\exp tX)\cdot m\,, \qquad m\in M\,
\end{equation}
so to have a correspondence  $X\longmapsto \tilde X$ which is a  Lie algebra anti-homomorphism, namely 
\begin{equation}
    -\widetilde{[X,Y]}=[\tilde X,\tilde Y]\,, \qquad X,Y\in\mathfrak g\,.
\end{equation}
A pseudo-Riemannian metric $g$ on $M$ is called $G$-invariant if every smooth map  $\tau_z$ on $M$ is an isometry, namely
\begin{equation}
    \tau_z^\ast g=g\,, \qquad \forall\,z\in G\,.
\end{equation}
Under the action of $G$, fundamental vector fields transform as $(\tau_z)_*\tilde X=\widetilde{\mathrm{Ad}_z(X)}$, which can be written locally as 
\begin{equation}
    (\tau_z)'_m\tilde X_m=(\widetilde{\Ad_zX})_{z\cdot m}\,, \qquad z\in G\,,\quad X\in\mathfrak g\,
\end{equation}
so that the invariance of $g$ can be written as
\begin{equation}\label{Eq: invariance metric 2nd}
    g_{z\cdot o}\left(\tilde X_{z\cdot o},\tilde Y_{z\cdot o}\right)
    =
    g_o\left(\widetilde{(\Ad_{z^{-1}}X)}_o,\widetilde{(\Ad_{z^{-1}}Y)}_o\right)\,, \qquad X,Y\in\mathfrak g\,.
\end{equation}
This shows that  a $G$-invariant pseudo-Riemannian metric is determined by its value at $o$. Similarly, an affine connection $\nabla$ on $M$ is called $G$-invariant if
\begin{equation}\label{Eq: nabla invariance 2nd}
    \left(\nabla_{\tilde X}\tilde Y\right)_{z\cdot o}=(\tau_z)'_o\left(\nabla_{\widetilde{\Ad_{z^{-1}} X}}\widetilde{\Ad_{z^{-1}}Y}\right)_o\,, \qquad X,Y\in\mathfrak g\,.
\end{equation}
It follows that a $G$-invariant affine connection is determined by its action at $o$ on fundamental vector fields. The precise algebraic characterization of these invariant metrics and affine connections is given by the \emph{Nomizu correspondence}, whose results we describe as follows, while referring the reader to \cite{Nomizu-1954} for a proof.

%\begin{proposition}[Nomizu's correspondence]
Let $M$ be a reductive homogeneous space with respect to the action of a Lie group $G$, with reductive decomposition
\begin{equation}
    \mathfrak g=\mathfrak g_o\oplus\mathfrak m\,,
\end{equation}
for a given point $o\in M$. The following holds. 
\begin{enumerate}[($a$)]
\item  Pseudo-Riemannian metrics on $M$ which are $G$-invariant are in one-to-one correspondence with non-degenerate bilinear forms 
\begin{equation}
    \langle \cdot,\cdot\rangle \colon \mathfrak m\times \mathfrak m\to \mathbb R
\end{equation}
in $\mathfrak{m}$ which are $\Ad(G_o)$-equivariant, namely
\begin{equation}\label{Eq: Ad(Go)-invariance 1st}
    \langle \Ad_hX,\Ad_hY\rangle=\langle X,Y\rangle\,, \qquad h\in G_o\,,\qquad X,Y\in \mathfrak m\,.
\end{equation}
Equivalently, if $G_o$ is connected, this condition is
\begin{equation}\label{Eq: Ad(Go)-invariance 2nd}
    \langle[A,X]_{\mathfrak m},Y\rangle
    +
    \langle X,[A,Y]_{\mathfrak m}\rangle
    =
    0\, ,
    \qquad A\in\mathfrak g_o\, ,
    \qquad X,Y\in\mathfrak m\, .
\end{equation}
The invariant pseudo-Riemannian metric associated to $\langle\cdot,\cdot\rangle$ is characterized by
\begin{equation}
    g_o(\tilde X,\tilde Y)=\langle X_{\mathfrak m},Y_{\mathfrak m} \rangle\,,\qquad X,Y\in \mathfrak g\,.
\end{equation}
\item Affine connections on $M$ which are $G$-invariant  are in one-to-one correspondence to the  bilinear maps
\begin{equation}
    \Lambda\colon\mathfrak m\times\mathfrak m\longrightarrow\mathfrak m
\end{equation}
which are $\Ad(G_o)$-equivariant, namely
\begin{equation}
    \Lambda(\Ad_hX,\Ad_hY)=\Ad_h\Lambda(X,Y)\, ,
    \qquad h\in G_o\, ,
    \qquad X,Y\in\mathfrak m\, .
\end{equation}
Equivalently, if $G_o$ is connected, this condition is
\begin{equation}
    \Lambda([A,X]_{\mathfrak m},Y)
    +
    \Lambda(X,[A,Y]_{\mathfrak m})
    =
    [A,\Lambda(X,Y)]_{\mathfrak m}\, ,
    \qquad A\in\mathfrak g_o\, ,
    \qquad X,Y\in\mathfrak m\, .
\end{equation}
The invariant affine connection associated to $\Lambda$ is characterized by the convention
\begin{equation}
    \widetilde{(\Lambda(X,Y))}_o
    =
    \left(\nabla_{\tilde X}\tilde Y-[\tilde X,\tilde Y]\right)_o\, ,
    \qquad X,Y\in\mathfrak m\, ;
    \label{Eq: Nomizu operator convention}
\end{equation}
with this convention, the action of the corresponding torsion tensor at $o\in M$ is
\begin{equation}
    (\Tor^\nabla(\tilde X,\tilde Y))_o
    =
    (\widetilde{\Lambda(X,Y)-\Lambda(Y,X)-[X,Y]_{\mathfrak m}})_o\, ,
    \qquad X,Y\in\mathfrak m\, ,
    \label{Eq: Nomizu torsion formula}
\end{equation}
while the action of the curvature  at $o$ is
\begin{align}
    \left(R^\nabla(\tilde X,\tilde Y)\,\tilde Z\right)_o
    &=
    \widetilde{\left(
        \Lambda(X,\Lambda(Y,Z))
        -
        \Lambda(Y,\Lambda(X,Z))
        -
        \Lambda([X,Y]_{\mathfrak m},Z)
        -
        [[X,Y]_{\mathfrak g_o},Z]
    \right)}_o\, .
    \label{Eq: Nomizu curvature formula}
\end{align}
\end{enumerate}
%\end{proposition}

Among the invariant affine connections associated to the given  reductive decomposition, we consider the one-parameter family $\{\nabla^{(\lambda)}\}_{\lambda\in\mathbb R}$ whose bilinears, according to the convention \eqref{Eq: Nomizu operator convention}, are given by 
\begin{equation}\label{Eq: lambda Nomizu multiplication}
    \Lambda^{(\lambda)}(X,Y)=\lambda\,[X,Y]_{\mathfrak m}\,, \qquad X,Y\in\mathfrak m\,.
\end{equation}
From the identity $[\tilde X,\tilde Y]_o=-(\widetilde{[X,Y]_{\mathfrak m}})_o$, this is equivalent to
\begin{equation}\label{Eq: lambda connection on fundamental fields}
    \left(\nabla^{(\lambda)}_{\tilde X}\tilde Y\right)_o
    =(1-\lambda)[\tilde X,\tilde Y]_o
    =(\lambda-1)\widetilde{([X,Y]_{\mathfrak m})}_o\,, \qquad X,Y\in\mathfrak m\,.
\end{equation}
Following the nomenclature introduced in \cite{Nomizu-1954}, $\nabla^{(1/2)}$ is the \underline{canonical affine connection of the first kind}, whereas $\nabla^{(0)}$ is the \underline{canonical affine connection of the second kind}. 
The action of the  torsion tensor corresponding to $\nabla^{(\lambda)}$ is given by
\begin{equation}\label{Eq: lambda canonical torsion}
    \left(\Tor^{\nabla^{(\lambda)}}(\tilde X,\tilde Y)\right)_o
    =(2\lambda-1)\widetilde{([X,Y]_{\mathfrak m})}_o\,, \qquad X,Y\in\mathfrak m\,.
\end{equation}
In particular, the canonical connection of the first kind has a vanishing torsion. For every $\lambda\in\mathbb R\setminus\{\frac{1}{2}\}$, the connection $\nabla^{(\lambda)}$ is torsion-free if and only if
\begin{equation}
    [\mathfrak m,\mathfrak m]\subseteq\mathfrak g_o\,.
\end{equation}
The action of the curvature corresponding to $\nabla^{(\lambda)}$ reads
\begin{equation}\label{Eq: lambda canonical curvature}
    \left(R^{\nabla^{(\lambda)}}(\tilde X,\tilde Y)\tilde Z\right)_o
    =\widetilde{(\lambda^2\,[X,[Y,Z]_{\mathfrak m}]_{\mathfrak m}
        -\lambda^2\,[Y,[X,Z]_{\mathfrak m}]_{\mathfrak m}-\lambda\,[[X,Y]_{\mathfrak m},Z]_{\mathfrak m}
        -[[X,Y]_{\mathfrak g_o},Z])}_o\,.
\end{equation}

We finally determine the $g$-conjugate connections. Since both $g$ and $\nabla^{(\lambda)}$ are $G$-invariant, the conjugate connection $\left(\nabla^{(\lambda)}\right)^\dagger$ is also $G$-invariant. From
\begin{equation}
    \mathcal L_{\tilde Z}\left(g(\tilde X,\tilde Y)\right)
    =
    g([\tilde Z,\tilde X],\tilde Y)+g(\tilde X,[\tilde Z,\tilde Y])\,,
\end{equation}
one finds that its corresponding bilinear  is given by
\begin{equation}\label{Eq: lambda dual Nomizu multiplication}
    \left\langle X,\left(\Lambda^{(\lambda)}\right)^\dagger(Z,Y)\right\rangle
    =-\lambda\left\langle[Z,X]_{\mathfrak m},Y\right\rangle\,, \qquad X,Y,Z\in\mathfrak m\,
\end{equation}
and therefore one has that $\nabla^{(\lambda)}$ is $g$-self-conjugate if and only if
\begin{equation}\label{Eq: lambda metric compatibility}
    \lambda\,\left(
        \left\langle[Z,X]_{\mathfrak m},Y\right\rangle
        +\left\langle X,[Z,Y]_{\mathfrak m}\right\rangle
    \right)=0
\end{equation}
for every $X,Y,Z\in\mathfrak m$. In particular, this condition is always satisfied for $\lambda=0$, so the canonical connection of the second kind is compatible with every $G$-invariant pseudo-Riemannian metric. For $\lambda\neq0$, compatibility is equivalent to the natural reductivity condition
\begin{equation}\label{Eq: naturally reductive condition}
    \left\langle[Z,X]_{\mathfrak m},Y\right\rangle
    +\left\langle X,[Z,Y]_{\mathfrak m}\right\rangle
    =0\,, \qquad X,Y,Z\in\mathfrak m\,.
\end{equation}
This shows that any reductive homogeneous pseudo-Riemannian manifold $(M,g)$ determines the one-parameter family of metric-affine manifolds
\begin{equation}\label{Eq: Nomizu metric-affine manifold}
   \left(M,g,\nabla^{(\lambda)}\right)\,,\qquad \lambda\in \mathbb R\,.
\end{equation}
\subsubsection{A solution to the inverse problem}

In order to write the bi-form which solves the inverse problem for \eqref{Eq: Nomizu metric-affine manifold}, we begin by computing the action of the  exponential map corresponding to the  canonical affine connections $\nabla^{(\lambda)}$ on $M$. Consider the element $Z\in\mathfrak{m}$, and let 
\begin{equation}\label{Eq: gamma Z}
    \gamma_Z(t)=\exp(tZ)\cdot o
\end{equation}
be the integral curve of the (fundamental) vector field $\tilde Z$ on $M$ through $o\in M$. Consider also an element $X\in\mathfrak{m}$ and $E$ to be a vector field on $M$ whose restriction along $\gamma_Z(t)$ gives $E(0)=\tilde X_o$. The action of the parallel transport induced by $\nabla^{(\lambda)}$ along $\gamma_Z$ can be explicitly analysed and is described as the claim of the following lemma, in terms of the map 
$\ad_Z^{\mathfrak m}\in\operatorname{End}(\mathfrak m)$ given by 
    $\ad_Z^{\mathfrak m}(Y)=[Z,Y]_{\mathfrak m}$ and the corresponding exponential, whose action can be expanded as
\begin{equation}
e^{-t\lambda\ad_Z^{\mathfrak{m}}}X
=
X-t\lambda\,[Z,X]_{\mathfrak{m}}
+\frac{(t\lambda)^2}{2}[Z,[Z,X]_{\mathfrak{m}}]_{\mathfrak{m}}
+\cdots\,.
\end{equation}
\begin{lemma}\label{Lemma: homogeneous parallel field}
Along the previous setting, one has 
\begin{equation}\label{Eq: homogeneous parallel field}
    E(\gamma_Z(t))=\left(\tau_{\exp(tZ)}\right)'_o\left(\widetilde{(e^{-t\lambda\,\ad_Z^{\mathfrak m}}X)}_o\right)\,.
\end{equation}
\end{lemma}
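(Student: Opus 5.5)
The approach is to exhibit the right-hand side of \eqref{Eq: homogeneous parallel field} as a vector field along $\gamma_Z$, check that it takes the value $\tilde X_o$ at $t=0$, and show that its $\nabla^{(\lambda)}$-covariant derivative along $\gamma_Z$ vanishes. Uniqueness of parallel transport then identifies it with $E$.

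\emph{Frame along $\gamma_Z$.} For $Y\in\mathfrak m$, set
\begin{equation}
W_Y(t)=\left(\tau_{\exp(tZ)}\right)'_o\tilde Y_o\in\T_{\gamma_Z(t)}M\,.
\end{equation}
The map $Y\mapsto W_Y(t)$ is linear, and it is an isomorphism $\mathfrak m\to\T_{\gamma_Z(t)}M$, because $Y\mapsto\tilde Y_o$ is an isomorphism $\mathfrak m\cong\T_oM$. The candidate field is therefore
\begin{equation}
F(t)=W_{c(t)}(t)\,,\qquad c(t)=e^{-t\lambda\ad_Z^{\mathfrak m}}X\,.
\end{equation}
Since $c(0)=X$ and $\tau_e=\id$, we have $F(0)=\tilde X_o$.

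\emph{Covariant derivative of the frame.} The flow of $\tilde Z$ is $\Phi_t=\tau_{\exp(tZ)}$, so $W_Y(t)=(\Phi_t)'_o\tilde Y_o$ is the push-forward of $\tilde Y_o$ along the flow. I will use the standard identity
\begin{equation}
\left.\mathrm D_t\right|_{t=0}(\Phi_t)'_o\tilde Y_o=\left(\nabla_{\tilde Z}\tilde Y-\mathcal L_{\tilde Z}\tilde Y\right)_o=\left(\nabla_{\tilde Z}\tilde Y-[\tilde Z,\tilde Y]\right)_o\,.
\end{equation}
This follows by writing $(\Phi_t)'_o\tilde Y_o=\tilde Y_{\gamma_Z(t)}-t\,[\tilde Z,\tilde Y]_{\gamma_Z(t)}+O(t^2)$ in a chart, which is the kinematic definition of the Lie derivative. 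By the Nomizu convention \eqref{Eq: Nomizu operator convention}, the right-hand side equals $\widetilde{(\Lambda^{(\lambda)}(Z,Y))}_o=\lambda\,\widetilde{([Z,Y]_{\mathfrak m})}_o$.

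To pass from $t=0$ to arbitrary $t$, I use $G$-invariance \eqref{Eq: nabla invariance 2nd}: $\tau_{\exp(sZ)}$ is an affine diffeomorphism of $\nabla^{(\lambda)}$ mapping $\gamma_Z(\cdot)$ to $\gamma_Z(\cdot+s)$. It therefore intertwines covariant derivatives along these curves, and $W_Y(t+s)=(\tau_{\exp(sZ)})'(\Phi_t)'_o\tilde Y_o$. This gives
\begin{equation}
\mathrm D_tW_Y(t)=W_{\lambda[Z,Y]_{\mathfrak m}}(t)\qquad\text{for all }t\,.
\end{equation}

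\emph{Conclusion.} Expand $c(t)$ in a basis of $\mathfrak m$ and apply the Leibniz rule along $\gamma_Z$. This yields
\begin{equation}
\mathrm D_tF(t)=W_{\lambda[Z,c(t)]_{\mathfrak m}+c'(t)}(t)\,.
\end{equation}
Since $c'(t)=-\lambda\,\ad^{\mathfrak m}_Z c(t)=-\lambda[Z,c(t)]_{\mathfrak m}$, the argument vanishes, so $F$ is $\nabla^{(\lambda)}$-parallel. Uniqueness of parallel fields with prescribed initial value then gives $E(\gamma_Z(t))=F(t)$.

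The main obstacle is the first-order identity for $\mathrm D_t(\Phi_t)'_o\tilde Y_o$. The sign of the Lie-derivative term must be checked carefully against the anti-homomorphism convention $[\tilde X,\tilde Y]=-\widetilde{[X,Y]}$, since a sign error there would turn $e^{-t\lambda\ad}$ into $e^{t\lambda\ad}$. I would verify it first in the trivial case $\lambda=0$, where $W_Y$ itself should be parallel for the canonical connection of the second kind.
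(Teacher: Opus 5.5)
Your proposal is correct and follows the paper's route. You write $E$ in the moving frame $(\tau_{\exp(tZ)})'_o\tilde Y_o$, $Y\in\mathfrak m$, and reduce parallelism along $\gamma_Z$ to the linear Cauchy problem $\dot Y(t)+\Lambda^{(\lambda)}(Z,Y(t))=0$, $Y(0)=X$. The only difference is that the paper quotes this ODE from \cite{Schlarb2024}, whereas you derive it from $\mathrm D_t|_{t=0}(\Phi_t)'_o\tilde Y_o=(\nabla_{\tilde Z}\tilde Y-[\tilde Z,\tilde Y])_o$, the convention \eqref{Eq: Nomizu operator convention} and $G$-invariance. That derivation is sound, and because the convention is stated directly in terms of $[\tilde Z,\tilde Y]$, the sign worry you raise does not arise.
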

\begin{proof}
Write
\begin{equation}\label{Eq: homogeneous Lie algebra representative}
    E(t)=\left(\tau_{\exp(tZ)}\right)'_o\left(\tilde Y(t)_o\right)\,, \qquad Y(t)\in\mathfrak m\,.
\end{equation}
Let $\{Z_j\}_{j=1}^{\dim M}$ be a basis of $\mathfrak m$, so that
\begin{align}
    Y(t)=Y^j(t)Z_j\,,\\  \dot Y(t)=\dot Y^j(t)Z_j\,.
\end{align}
As proven in \cite{Schlarb2024}, the parallel transport comes as the solution of the Cauchy problem
\begin{align}
    &\dot Y(t)+\Lambda^{(\lambda)}(Z,Y(t))=0\,, \\ & Y(0)=X\,,
\end{align}
which is a linear ODE whose solution one can write as 
    $Y(t)=e^{-t\,\lambda\,\ad_Z^{\mathfrak m}}X.$
Inserting this expression into \eqref{Eq: homogeneous Lie algebra representative} reads the claim \eqref{Eq: homogeneous parallel field}.
\end{proof}

Taking $X=Z$ in \eqref{Eq: homogeneous parallel field}, and noticing  that $\ad_Z^{\mathfrak m}(Z)=0$, we obtain
\begin{equation}
    E(t)=\left(\tau_{\exp(t\,Z)}\right)'_o\tilde Z_o=\dot\gamma_Z(t)\,.
\end{equation}
This shows that $\gamma_Z$ is a $\nabla^{(\lambda)}$-geodesic and therefore one has, for the exponential map corresponding to $\nabla^{(\lambda)}$ for any $\lambda\in\mathbb R$, that   at $o$, it is  
\begin{equation}\label{Eq: exponential}
    \exp^{\nabla^{(\lambda)}}_o(\tilde Z_o)=\exp(Z)\cdot o\,.
\end{equation}
We next show that a suitable $\nabla^{(\lambda)}$-convex neighbourhood $C$ around $o$ gives, under the action of $G$, a strongly $\nabla^{(\lambda)}$-convex covering
\begin{equation}
    \mathcal C_o=\{z\cdot C\mid z\in G\}
\end{equation}
for every $\lambda\in\mathbb R$. The resulting covering is $G$-invariant, and therefore the associated strongly convex neighbourhood is invariant under the diagonal action of $G$ on $M\times M$.
\begin{lemma}\label{Lemma: invariant strongly convex covering}
Let $G$ be a Lie group acting transitively on a smooth manifold $M$, let $o\in M$, and let $\mathfrak g=\mathfrak g_o\oplus\mathfrak m$ be a reductive decomposition. Assume that $M$ admits a $G$-invariant Riemannian metric. There exists an open neighbourhood $C$ around $o\in M$ which is $\nabla^{(\lambda)}$-convex for every $\lambda\in\mathbb R$ and such that
\begin{equation}
    \mathcal C_o=\{z\cdot C\mid z\in G\}=\{z\cdot m\,\mid m\in C\,,z\in G\}
\end{equation}
is a strongly $\nabla^{(\lambda)}$-convex covering of $M$ for every $\lambda\in\mathbb R$.
\end{lemma}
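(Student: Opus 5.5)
The strategy is to build $C$ as a sublevel set of a distance function for a fixed $G$-invariant Riemannian metric $h$ (assumed to exist). Then $G$-translates of $C$ are again such sublevel sets, and convexity for all $\nabla^{(\lambda)}$ follows from one uniform estimate. The key observation is that, by \eqref{Eq: exponential}, the $\nabla^{(\lambda)}$-geodesics through $o$ are the curves $t\mapsto\exp(tZ)\cdot o$ with $Z\in\mathfrak m$, independently of $\lambda$. By $G$-invariance of $\nabla^{(\lambda)}$, the geodesics through $z\cdot o$ are their $\tau_z$-images. Hence all the connections $\nabla^{(\lambda)}$ share the same geodesics as parametrized curves, so they share the same exponential map and the same convexity notions. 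Indeed, two connections with the same geodesics differ by a skew tensor; here the difference $\Lambda^{(\lambda)}-\Lambda^{(\mu)}=(\lambda-\mu)[\cdot,\cdot]_{\mathfrak m}$ is skew. So it suffices to prove the statement for the single connection $\nabla^{(1/2)}$, which is torsion-free. Equivalently, one can work with its symmetric part and carry the proof out for any fixed $\lambda$.

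Step one: with $\nabla=\nabla^{(1/2)}$, I would invoke the classical Whitehead argument (as in \cite{Moretti-2021}), adapted to an auxiliary Riemannian metric. Fix the $G$-invariant Riemannian metric $h$ and its distance $d_h$. Consider the function $f_c(m)=d_h(c,m)^2$, or better the function $\phi_c(m)=\|(\exp^{\nabla}_c)^{-1}(m)\|_h^2$ defined via $\nabla$-normal coordinates at $c$. Its $\nabla$-Hessian at $c$ is $2h_c$, which is positive definite. By smoothness there are $r_c>0$ such that $\mathrm{Hess}^\nabla\phi_c$ is positive definite on the ball $\{\phi_c<r_c^2\}$, $\exp_c$ is a diffeomorphism there, and each point of this ball is a normal neighbourhood of radius $\ge 2r_c$. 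By homogeneity, these quantities transfer along $\tau_z$: since $\tau_z$ is an $h$-isometry and an affine map for $\nabla$, we have $\phi_{z\cdot c}=\phi_c\circ\tau_{z^{-1}}$. Hence a single radius $r=r_o$ works uniformly at every point.

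Step two: set $C=B_r(o)=\{\phi_o<r^2\}$, so that $z\cdot C=B_r(z\cdot o)$. The standard argument gives that any such ball $B$ is $\nabla$-convex. For $m,n\in B$, the $\nabla$-geodesic inside the larger normal neighbourhood joining them exists and is unique. Along it, $\phi\circ\gamma$ is convex by the Hessian bound, so its maximum is attained at the endpoints and the geodesic stays in $B$. A continuity/openness argument rules out other geodesics within $B$.

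The main obstacle is strong convexity. We need that $B_r(a)\cap B_r(b)$ is $\nabla$-convex whenever it is nonempty. I plan to get it from the same Hessian argument. Choose $r$ so small (still uniformly, by homogeneity) that whenever $B_r(a)\cap B_r(b)\neq\emptyset$, both balls lie inside a common normal ball $B_{4r}(a)$ on which $\phi_a$ and $\phi_b$ are strictly $\nabla$-convex. Then for $m,n$ in the intersection, the unique short geodesic joining them keeps both $\phi_a\circ\gamma$ and $\phi_b\circ\gamma$ convex, so it stays in both balls. Uniqueness within the intersection is inherited from uniqueness in $B_{4r}(a)$.

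Finally, since the geodesics, and hence convexity, coincide for every $\lambda$, the covering $\mathcal C_o$ is strongly $\nabla^{(\lambda)}$-convex for all $\lambda$. It is also $G$-invariant by construction.
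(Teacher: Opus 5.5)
Your proposal is correct. Your set $C=\{\phi_o<r^2\}$ is exactly the paper's $C_r=\exp(B_r^{\mathfrak m}(0))\cdot o$, and reducing to a single connection through the common geodesics is also the paper's first step. After that the routes diverge.

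\textbf{The paper's route.} The paper does not reprove convexity. It cites the existence of small convex normal balls to obtain a convex $C_R$, chooses $\rho$ with $B_\delta(o,\rho)\subseteq C_R$, and then $r$ with $3r<\rho$. It then treats strong convexity metrically:
\begin{itemize}
\item the curve $t\mapsto\exp(tX)\cdot o$ has constant speed $\lVert X\rVert$ for the invariant metric, so $C_r\subseteq B_\delta(o,r)$;
\item the triangle inequality then gives $z\cdot C_r\subseteq B_\delta(o,3r)\subseteq C_R$ whenever $z\cdot C_r\cap C_r\neq\emptyset$;
\item two convex subsets of the convex set $C_R$ have convex intersection, because each one's segment is the unique segment in $C_R$.
\end{itemize}

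\textbf{Your route.} You rerun Whitehead's Hessian argument with $\phi_c$ and use $\phi_{z\cdot c}=\phi_c\circ\tau_{z^{-1}}$ to get a uniform convexity radius. This makes explicit what the paper uses tacitly, namely that $z\cdot C_r$ is convex by invariance. The gain is a self-contained proof; the cost is more work.

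\textbf{What to tighten.} The step to firm up is the containment ``both balls lie inside a common normal ball on which $\phi_a,\phi_b$ are strictly convex.'' The same issue arises in step two: the segment from the larger normal neighbourhood of $m$ must stay in the region where $\mathrm{Hess}^\nabla\phi$ is positive, which you have not ensured. Homogeneity makes the constants uniform, but you still need a comparison between $\phi$-balls and distances. The paper's constant-speed estimate $B_s(c)\subseteq B_{d_h}(c,s)$ plus the triangle inequality supplies this in one line.

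Once you have that containment, the Hessian of $\phi_b$ is unnecessary. Convexity of $B_r(a)$, of $B_r(b)$ and of the common ambient ball already yields convexity of the intersection. In fact, in your setup strong convexity is nearly automatic. The segment produced in step two depends only on $(m,n)$, so it lies in every $r$-ball containing both endpoints. Uniqueness in the intersection is then inherited from either ball.
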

\begin{proof}
    Let $g$ be the $G$-invariant Riemannian metric determined by the $\Ad(G_o)$-invariant inner product $\langle\cdot,\cdot\rangle$ on $\mathfrak m$, and let $\delta$ denote the induced Riemannian distance. For every $s>0$, set
    \begin{equation}
        C_s=\exp^G\left(B_s^{\mathfrak m}(0)\right)\cdot o\,,\qquad B_s^{\mathfrak m}(0)=\left\{X\in\mathfrak m\mid\lVert X\rVert<s\right\}\,.
    \end{equation}
    By the existence of arbitrarily small convex neighbourhoods for affine connections \cite{Postnikov-2001}, and since by \eqref{Eq: exponential} all the connections $\nabla^{(\lambda)}$ have the same geodesics, there exists $R>0$ such that $C_R$ is a $\nabla^{(\lambda)}$-convex neighbourhood of $o$ for every $\lambda\in\mathbb R$. Since $C_R$ is open, there exists $\rho>0$ such that
    \begin{equation}\label{Eq: B(o,rho) in CR}
        B_\delta(o,\rho)\subseteq C_R\,:
    \end{equation}
    we may then choose $r>0$ such that $3r<\rho$ and $C_r$ is $\nabla^{(\lambda)}$-convex for every $\lambda\in\mathbb R$.
    We first notice  that
    \begin{equation}\label{Eq: Cr in B(o,r)}
        C_r\subseteq B_\delta(o,r)\,.
    \end{equation}
    Indeed, for every $X\in B_r^{\mathfrak m}(0)$, the curve $\gamma_X(t)=\tau_{\exp(tX)}(o)$ has tangent vector given by 
    \begin{equation}
        \dot{\gamma}_X(t)=\left(\tau_{\exp(tX)}\right)'_o\tilde X_o\,.
    \end{equation}
    Since the Lie group $G$ acts by isometries, the norm of such a vector  $\dot\gamma_X$ is constant: 
    \begin{equation}
        g_{\gamma_X(t)}\left(\dot{\gamma}_X(t),\dot{\gamma}_X(t)\right)=g_o\left(\tilde X_o,\tilde X_o\right)=\lVert X\rVert^2\,.
    \end{equation}
    Therefore,
    \begin{equation}
        \delta\left(o,\exp(X)\cdot o\right)\leq\operatorname{Length}_g(\gamma_X)=\lVert X\rVert<r\,.
    \end{equation}
    To prove the claim, we check that, for every $z\in G$, the intersection $(z\cdot C_r)\cap C_r$ is either empty or $\nabla^{(\lambda)}$-convex for every $\lambda\in\mathbb R$. Suppose that it is nonempty, and choose $m\in (z\cdot C_r)\cap C_r$. Given any $n\in z\cdot C_r$, since  $\delta$ is $G$-invariant and $z^{-1}\cdot m,z^{-1}\cdot n\in C_r$, the triangle inequality gives
    \begin{align}
        \delta(o,n)
        &\leq\delta(o,m)+\delta(m,n)\\ 
        &<r+\delta\left(z^{-1}\cdot m,z^{-1}\cdot n\right) \\
        &\leq r+\delta\left(z^{-1}\cdot m,o\right)+\delta\left(o,z^{-1}\cdot n\right)\\
        &<3r<\rho\,.
    \end{align}
    This shows that $z\cdot C_r\subseteq B_\delta(o,\rho)\subseteq C_R$. On the other hand, $C_r\subseteq C_R$ by \eqref{Eq: Cr in B(o,r)} and \eqref{Eq: B(o,rho) in CR}. By construction, $C_r$ is $\nabla^{(\lambda)}$-convex, while the $G$-invariance of $\nabla^{(\lambda)}$ implies that $z\cdot C_r$ is $\nabla^{(\lambda)}$-convex. Since they are contained in the same $\nabla^{(\lambda)}$-convex neighbourhood $C_R$, their intersection $z\cdot C_r\cap C_r$ is $\nabla^{(\lambda)}$-convex.
    Finally, for every $z_1,z_2\in G$,
    \begin{equation}
        (z_1\cdot C_r)\cap (z_2\cdot C_r)=z_1\cdot\left(C_r\cap (z_1^{-1}z_2\cdot C_r)\right)
    \end{equation}
    is either empty or $\nabla^{(\lambda)}$-convex for every $\lambda\in\mathbb R$. Since the images of $C_r$ under the action of $G$ cover $M$, the family $\mathcal C_o$ is a strongly $\nabla^{(\lambda)}$-convex covering for every $\lambda\in\mathbb R$.
\end{proof}

Let $C_r=\exp(B_r^{\mathfrak m}(0))\cdot o$ and $C_R=\exp(B_R^{\mathfrak m}(0))\cdot o$ be as in the previous lemma, and consider the strongly $\nabla^{(\lambda)}$-convex covering $\mathcal C_o$ associated to $C_r$. The corresponding neighbourhood of the diagonal is
\begin{equation}\label{Eq: diagonal neighbourhood homogeneous}
    \mathscr U_o=\left\{\left(z\cdot\exp(A)\cdot o,z\cdot\exp(B)\cdot o\right)\mid z\in G,\ A,B\in B_r^{\mathfrak m}(0)\right\}\,.
\end{equation}
The construction of $C_r$ ensures that $\exp(-A)\cdot\exp(B)\cdot o\in C_R$. Hence, the $\nabla^{(\lambda)}$-geodesic interpolation on $\mathscr U_o$ is well defined by
\begin{equation}\label{Eq: homogeneous geodesic interpolation}
    \Gamma^{(\lambda)}\left(z\cdot\exp(A)\cdot o,z\cdot\exp(B)\cdot o,t\right)
    =
    z\cdot\exp(A)\cdot\mathcal E\left(t\,\mathcal L\left(\exp(-A)\cdot\exp(B)\cdot o\right)\right)\,.
\end{equation}
Here $\mathcal E\colon\mathfrak m\to M$ is given by
\begin{equation}
    \mathcal E(Z)=\exp(Z)\cdot o\,,
\end{equation}
and $\mathcal L$ denotes the inverse of the restriction of $\mathcal E$ to $B_R^{\mathfrak m}(0)$.
We now compute the $\nabla^{(\lambda)}$-geodesic $\nabla^{(\lambda)}$-parallel propagator
\begin{equation}
    P^{(\lambda)}=P^{\nabla^{(\lambda)},\nabla^{(\lambda)}}
\end{equation}
on $\mathscr U_o$, and the associated contrast bi-form $\varpi^{(\lambda)}$. Since $\nabla^{(\lambda)}$ is $G$-invariant, its parallel transport is $G$-equivariant. More precisely, let $\gamma\colon[0,1]\to M$ satisfy $\gamma(0)=m$ and $\gamma(1)=n$. Then the following diagram commutes:
\begin{equation}\label{Diagram: G-invariance parallel transport}
\begin{tikzcd}[column sep=large,row sep=large]
    \T_mM
    \arrow[r,"P^{\nabla^{(\lambda)},\gamma}_{1\leftarrow0}"]
    \arrow[d,"(\tau_z)'_m"']
    &
    \T_nM
    \arrow[d,"(\tau_z)'_n"]
    \\
    \T_{z\cdot m}M
    \arrow[r,"P^{\nabla^{(\lambda)},\tau_z\circ\gamma}_{1\leftarrow0}"']
    &
    \T_{z\cdot n}M
\end{tikzcd}
\end{equation}
Equivalently, it is 
\begin{equation}\label{Eq: equinvariance}
    P^{\nabla^{(\lambda)},\tau_z\circ\gamma}_{1\leftarrow0}\circ(\tau_z)'_m
    =
    (\tau_z)'_n\circ P^{\nabla^{(\lambda)},\gamma}_{1\leftarrow0}\,.
\end{equation}
It  is therefore enough to evaluate the action of the parallel transport along curves of the form $\gamma_Z$, with $Z\in\mathfrak m$ (cf. \eqref{Eq: gamma Z}). By \eqref{Eq: homogeneous parallel field}, for every $X,Z\in\mathfrak m$ such that $(o,\exp(Z)\cdot o)\in\mathscr U_o$, one has
\begin{equation}\label{Eq: parallel transport lambda homogeneous}
    P^{\nabla^{(\lambda)},\gamma_Z}_{1\leftarrow0}\tilde X_o
    =
    \left(\tau_{\exp(Z)}\right)'_o
    \left(\widetilde{(e^{-\lambda\,\ad_Z^{\mathfrak m}}X)}_o\right)\,:
\end{equation}
let
\begin{equation}
    (m,n)=\left(z\cdot\exp(A)\cdot o,z\cdot\exp(B)\cdot o\right)\in\mathscr U_o
\end{equation}
and set
\begin{equation}
    Z=\mathcal L\left(\exp(-A)\cdot\exp(B)\cdot o\right)\,.
\end{equation}
The relation \eqref{Eq: equinvariance} gives, for the interpolation \eqref{Eq: homogeneous geodesic interpolation}, 
\begin{equation}
    P^{\nabla^{(\lambda)},\Gamma^{(\lambda)}(m,n,\cdot)}_{1\leftarrow0}\tilde X_m
    =
    \left(\tau_{z\cdot\exp(A)}\right)'_{\exp(-A)\cdot\exp(B)\cdot o}
    P^{\nabla^{(\lambda)},\gamma_Z}_{1\leftarrow0}
    \left(\tau_{\exp(-A)\cdot z^{-1}}\right)'_m\tilde X_m\,.
\end{equation}
Combining this identity with \eqref{Eq: parallel transport lambda homogeneous} and the transformation law of fundamental vector fields, we obtain
\begin{equation}
    P^{\nabla^{(\lambda)},\Gamma^{(\lambda)}(m,n,\cdot)}_{1\leftarrow0}\tilde X_m
    =
    \widetilde{(\Ad_{z\cdot\exp(A)\cdot\exp(Z)}
    (e^{-\lambda\,\ad_Z^{\mathfrak m}}
    (\Ad_{\exp(-A)\cdot z^{-1}}X)_{\mathfrak m}))}_n\,.
\end{equation}
The $G$-invariance of $g$ then gives
\begin{align}
    \varpi^{(\lambda)}(\tilde X\mid \tilde Y)(m,n)
    &=
    g_n\left(
    \widetilde{(\Ad_{z\cdot\exp(A)\cdot\exp(Z)}
    (e^{-\lambda\,\ad_Z^{\mathfrak m}}
    (\Ad_{\exp(-A)\cdot z^{-1}}X)_{\mathfrak m}))}_n,
    \tilde Y_n\right)\\
    &=
    g_o\left(\widetilde{(e^{-\lambda\,\ad_Z^{\mathfrak m}}
    (\Ad_{\exp(-A)\cdot z^{-1}}X)_{\mathfrak m})}_o,
    \widetilde{(\Ad_{\exp(-Z)\cdot\exp(-A)\cdot z^{-1}}Y)}_o\right)\,.
\end{align}
Hence, the solution bi-form reads
\begin{equation}\label{Eq: homogeneous lambda bi-form}
    \varpi^{(\lambda)}(\tilde X\mid \tilde Y)
    \left(z\cdot\exp(A)\cdot o,z\cdot\exp(B)\cdot o\right)
    =
    \left\langle
    e^{-\lambda\,\ad_Z^{\mathfrak m}}
    \left(\Ad_{\exp(-A)\cdot z^{-1}}X\right)_{\mathfrak m},
    \left(\Ad_{\exp(-Z)\cdot\exp(-A)\cdot z^{-1}}Y\right)_{\mathfrak m}
    \right\rangle\,.
\end{equation}
%In particular, taking $z=e$, $A=0$ and $B=Z$, we obtain\begin{equation} \varpi^{(\lambda)}(X^+\mid Y^+)\left(o,\exp^G(Z)\cdot o\right)=\left\langlee^{(1-\lambda)\ad_Z^{\mathfrak m}}X,\left(\Ad_{\exp^G(-Z)}Y\right)_{\mathfrak m}\right\rangle\,.\end{equation}

\subsection{Cartan-Schouten connections on semisimple Lie groups}\label{Sec: CS on Lie groups}
Any Lie group $G$ is a reductive homogeneous space under the action on itself by left translations $L_a(b)=a\cdot b$. Indeed, the isotropy subgroup at the identity $e$ is trivial, and the corresponding reductive decomposition is
\begin{equation}
    \mathfrak g=\{0\}\oplus\mathfrak g\,.
\end{equation}
In this setting, $G$-invariant metrics and affine connections are precisely left-invariant metrics and affine connections. On the other hand, the fundamental vector field generated by $X\in\mathfrak g$ is the right-invariant vector field associated to $X$
\begin{equation}\label{Eq: from right to left}
    \tilde X_a=(L_a)_e' \left(\Ad_{a^{-1}}X\right)_e\,, \qquad a\in G\,.
\end{equation}
If $G$ is semisimple, there is a distinguished pseudo-Riemannian metric, the \emph{Cartan-Killing form}, which is 
\begin{equation}
    \kappa(X,Y)=\Tr\left(\ad_X\circ\ad_Y\right)\,,
    \qquad X,Y\in\mathfrak g.
\end{equation}
The form $\kappa$  is $\Ad(G)$-invariant, namely
\begin{equation}
    \kappa(\Ad_zX,\Ad_zY)=\kappa(X,Y)\,,
    \qquad X,Y\in\mathfrak g\,,\quad z\in G\,.
\end{equation}
Equivalently, if $G$ is connected, it is 
\begin{equation}
    \kappa([Z,X],Y)=-\kappa(X,[Z,Y])\,,
    \qquad X,Y,Z\in\mathfrak g\,.
\end{equation}
In particular, by \eqref{Eq: lambda metric compatibility}, all the canonical affine connections $\nabla^{(\lambda)}$ are compatible with $\kappa$. Since $\nabla^{(1/2)}$ is also torsion-free, then it is the Levi-Civita connection corresponding to $\kappa$. Formula \eqref{Eq: lambda canonical curvature} reduces to
\begin{equation}\label{Eq: CS curvature}
    R^{\nabla^{(\lambda)}}(\tilde X,\tilde Y)\,\tilde Z
    =
    \lambda(\lambda-1)[[\tilde X,\tilde Y],\tilde Z]\,,
    \qquad X,Y,Z\in\mathfrak g\,.
\end{equation}
This shows that $\nabla^{(\lambda)}$ has a vanishing curvature  if and only if $\lambda\in\{0,1\}$. The curvature-free connections $\nabla^{(0)}$ and $\nabla^{(1)}$ are known, respectively, as the \underline{left Cartan} and \underline{right Cartan} affine connections.

\subsubsection{A solution to the inverse problem}

Let us write the bi-form which solves the inverse problem for the metric-affine manifolds 
$\left(G,\kappa,\nabla^{(\lambda)}\right).$
Using \eqref{Eq: homogeneous lambda bi-form}, the germ of the solution bi-form $\varpi^{(\lambda)}$ along the diagonal submanifold is\footnote{Notice that $\log\,:\,G\,\to\,\mathfrak{g}$ is the inverse, where it exists, of the exponential map for a given Lie group. If $G$ is compact and connected, for example, the exponential is known to be surjective.} 
\begin{equation}\label{Eq: CS solution biform}
    \varpi^{(\lambda)}(\tilde X\mid \tilde Y)(a,b)
    =
    \kappa\left(
    \Ad_{\exp\left(-\lambda\,\log\left(a^{-1}\cdot b\right)\right)}
    \Ad_{a^{-1}}X,
    \Ad_{b^{-1}}Y
    \right)\,,
    \qquad X,Y\in\mathfrak g\,,\quad (a,b)\in\mathscr U_e\,,
\end{equation}
where $\mathscr U_e$ is the $\nabla^{(\lambda)}$-convex neighbourhood constructed as in \cref{Lemma: invariant strongly convex covering}.
For $\lambda=0$ and $\lambda=1$, this recovers the left and right Cartan cases considered in \cite{CMPSZ2026}, where the solution bi-forms were expressed in terms of left-invariant vector fields. Here, instead, we evaluate them on the right-invariant fundamental vector fields associated to the left action of $G$ on itself.

\begin{example}\label{Example: SU}
Let $G=\SU(d+1)$, and identify its Lie algebra $\su(d+1)$ with the real vector space of traceless skew-Hermitian matrices. With this convention, the Cartan-Killing form is
\begin{equation}
    \kappa(X,Y)=2\,(d+1)\,\Tr(XY)\,.
\end{equation}
By diagonal left-invariance, the solution bi-form in \eqref{Eq: CS solution biform} is determined by its action on pairs of the form $(\mathbb I_{d+1},e^Z)$ in a diagonally left-invariant strongly $\nabla^{(\lambda)}$-convex domain, and it reads
\begin{equation}\label{Eq: SU solution biform}
    \varpi^{(\lambda)}(\tilde X\mid \tilde Y)(\mathbb I_{d+1},e^Z)
    =
    \kappa\left(e^{-\lambda Z}Xe^{\lambda Z},e^{-Z}Ye^Z\right)
    =
    2\,(d+1)\,\Tr\left(e^{-\lambda Z}\,X\,e^{\lambda Z}\,e^{-Z}Ye^Z\right)\,.
\end{equation}
Note that in the curvature-free case, we have
\begin{align}
    \varpi^{(0)}(\tilde X\mid \tilde Y)(\mathbb I_{d+1},e^Z)
    &=2\,(d+1)\,\Tr\left(X\,e^{-Z}\,Y\,e^Z\right)\,,\\
     \varpi^{(1)}(\tilde X\mid \tilde Y)(\mathbb I_{d+1},e^Z)
    &=2\,(d+1)\,\Tr\left(XY\right)\,.
\end{align}
By \eqref{Eq: from right to left}, when written in terms of left-invariant vector fields, these expressions recover the bi-forms obtained in \cite{CMPSZ2026}.
\end{example}

In the rest of this section,  we use this example to study the metric-affine manifolds induced by the left-exact and bi-exact components of $\varpi^{(\lambda)}$. As we shall see, passing to these exact components removes the dependence on $\lambda$ and loses the information on the action of the corresponding torsion, which is encoded by the full solution bi-forms. 

Since $\nabla^{(\lambda)}$ is $\kappa$-self-conjugate, the potential functions \eqref{Eq: canonical pre-contrast general refined}, \eqref{Eq: Ay-Amari}, and \eqref{Eq: Hook law} become, for every $(a,b)\in\mathscr U_e$ and $Y\in\mathfrak g$,
\begin{align}
    S(\mid Y)(a,b)&=\kappa\left(\log(b^{-1}\cdot a),\Ad_{b^{-1}}Y\right)\,,\label{Eq: CS precontrast}\\
    D(a,b)&=-\frac12\,\kappa\left(\log(a^{-1}\cdot b),\log(a^{-1}\cdot b)\right)\,.\label{Eq: CS contrast}
\end{align}
Both $S$ and $D$ do not depend on $\lambda$.

\begin{lemma}
On $\mathscr U_e$, the local potentials \eqref{Eq: CS precontrast} and \eqref{Eq: CS contrast} satisfy
\begin{equation}
    S=\mathrm d^R D\,.
\end{equation}
Moreover, the contrast bi-form
\begin{equation}
    \mathrm d^L S=\mathrm d^L\mathrm d^R D
\end{equation}
is statistically equivalent to $\varpi^{(1/2)}$.
\end{lemma}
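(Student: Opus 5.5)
The plan is to establish the two claims separately: $S=\mathrm d^RD$ by a direct computation using the $\Ad$-invariance of $\kappa$, and the statistical equivalence from \cref{Thm: canonical pre-contrasts refined} with $\lambda=\tfrac12$.

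For the first claim, I fix $(a,b)\in\mathscr U_e$ and set $W=\log(a^{-1}\cdot b)$. Since $b^{-1}\cdot a=(a^{-1}\cdot b)^{-1}=\exp(-W)$, \eqref{Eq: CS precontrast} reads $S(\mid Y)(a,b)=-\kappa(W,\Ad_{b^{-1}}Y)$. Using $\Ad_{b^{-1}}=\Ad_{\exp(-W)}\Ad_{a^{-1}}$, the $\Ad$-invariance of $\kappa$, and $\Ad_{\exp(W)}W=W$, this becomes
\begin{equation}
S(\mid Y)(a,b)=-\kappa\left(W,\Ad_{a^{-1}}Y\right)\,.
\end{equation}

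Next I compute $\mathrm d^RD(\mid Y)(a,b)=\mathcal L_{\tilde Y^R}D(a,b)$. The flow of the fundamental field $\tilde Y$ is left multiplication, $b\mapsto\exp(tY)\cdot b$. With $U=\Ad_{a^{-1}}Y$ one gets
\begin{equation}
a^{-1}\cdot\exp(tY)\cdot b=\exp(tU)\cdot\exp(W)\,,
\end{equation}
so
\begin{equation}
\mathrm d^RD(\mid Y)(a,b)=-\kappa\left(W,\left.\tfrac{\mathrm d}{\mathrm dt}\right|_{t=0}\log\left(\exp(tU)\exp(W)\right)\right)\,.
\end{equation}
The derivative of $\log$ is given by the standard formula $F(\ad_W)U$, where $F$ is a power series with constant term $1$ (explicitly $F(x)=x/(1-e^{-x})$). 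Because $\ad_W$ is $\kappa$-skew and $\ad_WW=0$, one has $\kappa(W,\ad_W^kU)=\pm\kappa(\ad_W^kW,U)=0$ for all $k\geq1$. Only the constant term survives, so $\mathrm d^RD(\mid Y)(a,b)=-\kappa(W,U)$, which equals $S(\mid Y)(a,b)$. This gives $S=\mathrm d^RD$ and hence $\mathrm d^LS=\mathrm d^L\mathrm d^RD$.

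The main obstacle is justifying the differential of $\log$ in the right convention, and checking that the domain $\mathscr U_e$ stays inside the region where $\log$ is a diffeomorphism. The latter is guaranteed by the choice $C_r\subseteq C_R$ in \cref{Lemma: invariant strongly convex covering}.

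For the statistical equivalence, I note that $\nabla^{(1/2)}$ is the Levi-Civita connection of $\kappa$, so $(G,\kappa,\nabla^{(1/2)})$ is a statistical manifold and in particular a SMAT. By \cref{Thm: canonical pre-contrasts refined} with $\nabla_0=\nabla_1=\nabla^{(1/2)}$, $\varpi^{(1/2)}$ is statistically equivalent to $\mathrm d^LS^{\nabla^{(1/2)}}$, where
\begin{equation}
S^{\nabla^{(1/2)}}(\mid Y)(a,b)=\kappa_b\left(\dot\gamma_{a\leftarrow b}(0),\tilde Y_b\right)\,.
\end{equation}
It then remains to identify this with \eqref{Eq: CS precontrast}, using the description of the geodesics in \eqref{Eq: homogeneous geodesic interpolation}. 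The geodesic from $b$ to $a$ is $t\mapsto b\cdot\exp(t\log(b^{-1}a))$. Its initial velocity is $(L_b)'_e\log(b^{-1}a)$, which by \eqref{Eq: from right to left} is $\tilde V_b$ with $V=\Ad_b\log(b^{-1}a)$. The invariance of $\kappa$ then gives
\begin{equation}
\kappa_b(\tilde V_b,\tilde Y_b)=\kappa\left(\log(b^{-1}a),\Ad_{b^{-1}}Y\right)=S(\mid Y)(a,b)\,.
\end{equation}
Since equality on an open neighbourhood of the diagonal suffices for statistical equivalence, $\mathrm d^LS=\mathrm d^L\mathrm d^RD$ is statistically equivalent to $\varpi^{(1/2)}$.
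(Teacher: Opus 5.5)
Your proposal is correct. For $S=\mathrm d^RD$ you follow essentially the paper's argument: the differential of $\log$, together with the $\kappa$-skewness of $\ad_W$ and $\ad_WW=0$. The only difference is that you perturb on the left, $\exp(tU)\exp(W)$, so you first rewrite $S$ with $\Ad_{a^{-1}}$. The paper instead writes $a^{-1}\exp(tY)b=\exp(Z)\exp(t\Ad_{b^{-1}}Y)$ and compares directly with $-\kappa(Z,\Ad_{b^{-1}}Y)$. For your left perturbation the correct series is $F(x)=x/(e^{x}-1)$; the series $x/(1-e^{-x})$ belongs to the right perturbation. This slip is harmless, since you only use $F(0)=1$.

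For the statistical equivalence you take a genuinely different route. The paper applies the BCH differential a second time to compute $\mathrm d^LS$ explicitly, obtaining $\kappa\bigl(\tfrac{-\ad_Z}{1-\exp(\ad_Z)}\Ad_{a^{-1}}X,\Ad_{b^{-1}}Y\bigr)$. It then compares the Taylor expansion of this expression with that of $\varpi^{(\lambda)}=\kappa(e^{-\lambda\ad_Z}\Ad_{a^{-1}}X,\Ad_{b^{-1}}Y)$, and the constant and first-order terms agree precisely for $\lambda=\tfrac12$. You instead use the geodesics $t\mapsto b\exp(t\log(b^{-1}a))$ to identify $S$ with the canonical pre-contrast $S^{\nabla^{(1/2)}}=J_L^{\nabla^{(1/2)}}\varpi^{(1/2)}$, and then invoke \cref{Thm: canonical pre-contrasts refined}.

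Your version avoids the second BCH computation, and it actually justifies the specialization \eqref{Eq: CS precontrast}, which the paper only asserts. However, it relies on homotopy-operator results the paper imports from earlier work (\cref{Thm: integration of a contrast bi-form} and the statistical property of $J_L^{\nabla_1}$), and it gives no closed form for $\mathrm d^LS$. The paper's computation is self-contained at the Lie-algebra level. By treating the whole $\lambda$-family at once, it also makes visible the remark that the exact components forget $\lambda$, and with it the torsion.
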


\begin{proof}
Fix $(a,b)\in\mathscr U_e$ and set $Z=\log(a^{-1}\cdot b)$. We use the Baker-Campbell-Hausdorff differential formula \cite[Section 5.5]{Hall}
\begin{equation}\label{Eq: BCH differential}
    \frac{\mathrm d}{\mathrm dt}\bigg|_{t=0}\log\left(\exp(A)\cdot\exp(t\,B)\right)=\frac{\ad_A}{1-\exp(-\ad_A)}B\,.
\end{equation}
Since the vector field $\tilde Y$ is right-invariant, its integral curve through $b$ is $\gamma(t)=\exp(t\,Y)\cdot b$. Moreover,
\begin{equation}
    a^{-1}\cdot\exp(tY)\cdot b=a^{-1}\cdot b\cdot\exp\left(t\,\Ad_{b^{-1}}Y\right)=\exp(Z)\cdot\exp\left(t\,\Ad_{b^{-1}}Y\right)\,.
\end{equation}
It follows from \eqref{Eq: BCH differential} that
\begin{equation}
    \mathrm d^R D(\mid Y)(a,b)=-\kappa\left(Z,\frac{\ad_Z}{1-\exp(-\ad_Z)}\Ad_{b^{-1}}Y\right)=-\kappa\left(Z,\Ad_{b^{-1}}Y\right)=S(\mid Y)(a,b)\,.
\end{equation}
Indeed, the $\ad$-invariance of $\kappa$ implies that every term containing a positive power of $\ad_Z$ vanishes when paired to $Z$. This reads $S=\mathrm d^R D$.
We next compute $\mathrm d^L S$. For $X\in \mathfrak g$,
\begin{equation}
    b^{-1}\cdot\exp(sX)\cdot a=b^{-1}\cdot a\cdot\exp\left(s\,\Ad_{a^{-1}}X\right)=\exp(-Z)\cdot\exp\left(s\,\Ad_{a^{-1}}X\right)\,.
\end{equation}
Thus, using again the relation \eqref{Eq: BCH differential}, we obtain
\begin{equation}\label{Eq: CS dLS}
    \mathrm d^L S(X\mid Y)(a,b)=\kappa\left(\frac{-\ad_Z}{1-\exp(\ad_Z)}\Ad_{a^{-1}}X,\Ad_{b^{-1}}Y\right)\,.
\end{equation}
On the other hand, the relation \eqref{Eq: CS solution biform} reads
\begin{equation}\label{Eq: CS varpi lambda}
    \varpi^{(\lambda)}(\tilde X\mid \tilde Y)(a,b)=\kappa\left(e^{-\lambda\ad_Z}\Ad_{a^{-1}}X,\Ad_{b^{-1}}Y\right)\,.
\end{equation}
Comparing the scalar Taylor expansions
\begin{align}
    \frac{-u}{1-\exp(u)}&=1-\frac12\,u+\frac{1}{12}\,u^2+o(u^3)\,,\label{Eq: CS scalar dLS expansion}\\
    \exp(-\lambda u)&=1-\lambda u+\frac{\lambda^2}{2}\,u^2-\frac{\lambda^3}{6}\,u^3+o(u^3)\,,\label{Eq: CS scalar varpi expansion}
\end{align}
we see that their constant and first-order terms agree precisely for $\lambda=\frac12$. Hence $\mathrm d^L S-\varpi^{(1/2)}$ vanishes to first order along the diagonal, and the two contrast bi-forms are statistically equivalent.
\end{proof}
\subsection{Odd-dimensional spheres with Berger metrics}\label{Sec: Berger metrics}

We conclude our analysis upon studying the Berger spheres \cite{Berger1961,D-G-P-2016} endowed with the canonical affine connection of the second kind. They give a  family of homogeneous metric-affine manifolds whose curvature and torsion are, in general, both non vanishing.

Let $d\geq 1$. The odd-dimensional sphere $S^{2d+1}\subseteq\mathbb C^{d+1}$ is a $\SU(d+1)$-homogeneous space with respect to the standard matrix action of $\mathrm{GL}(\mathbb{C}, d+1)$. The isotropy subgroup at $\mathbf e_0=(1,0,\dots,0)\in\mathbb C^{d+1}$ is isomorphic to $\SU(d)$, and its Lie algebra is
\begin{equation}
    \su(d+1)_{\mathbf e_0}
    =
    \left\{
        \begin{pmatrix}
            0 & \mathbf 0^\ast\\
            \mathbf 0 & A
        \end{pmatrix}
        \,\middle|\,
        A\in\su(d)
    \right\}\,.
\end{equation}
Using the Cartan-Killing form $\kappa$ corresponding to $\su(d+1)$, we select the orthogonal complement
\begin{equation}
    \mathfrak m=\left(\su(d+1)_{\mathbf e_0}\right)^{\perp_\kappa}
    =
    \left\{
        X(a,\mathbf b)\mid
        a\in\mathbb R\,,\ \mathbf b\in\mathbb C^d
    \right\}\,, \qquad  X(a,\mathbf b)
    =
    \begin{pmatrix}
        \mathrm i\,a & -\mathbf b^\ast\\
        \mathbf b & -\frac{\mathrm i\,a}{d}\,\mathbb I_d
    \end{pmatrix}\,.
\end{equation}
The $\ad$-invariance of $\kappa$ implies that $\mathfrak m$ is $\Ad\left(\SU(d)\right)$-invariant, and therefore
\begin{equation}
    \su(d+1)
    =
    \su(d+1)_{\mathbf e_0}\oplus\mathfrak m
    \cong
    \su(d)\oplus\mathfrak m
\end{equation}
is a reductive decomposition. As a vector space, $\mathfrak m$ decomposes further as
\begin{equation}
    \mathfrak m
    =
    \mathfrak m_{\mathrm v}\oplus\mathfrak m_{\mathrm h}
    \cong
    \mathbb R\oplus\mathbb C^d\,,
\end{equation}
where $\mathfrak m_{\mathrm v}$ is the one-dimensional subspace generated by $X(1,\mathbf 0)$, while $\mathfrak m_{\mathrm h}$ is the $2d$-dimensional real vector space given by $\{X(0,\mathbf b)\mid \mathbf b\in \mathbb C^d\}$.

The \emph{Berger metrics} provide  the family of $\SU(d+1)$-invariant pseudo-Riemannian metrics
\begin{equation}\label{Eq: Berger metrics}
    \left\langle X(a,\mathbf b),X(c,\mathbf d)\right\rangle_\varepsilon
    =
    \Re\left\langle\mathbf b,\mathbf d\right\rangle_{\mathbb C^d}
    +
    \varepsilon\,a\,c\,,
    \qquad
    \varepsilon\ne 0\,.
\end{equation}
For $\varepsilon>0$, the metric $g_\varepsilon$ is Riemannian, whereas for $\varepsilon<0$ it is Lorentzian. For $d\geq2$, this family exhausts, up to an overall positive constant, the $\SU(d+1)$-invariant pseudo-Riemannian metrics on $S^{2d+1}$.
For simplicity, among the family of homogeneous metric-affine manifolds determined by the Berger metrics and the canonical connections, we restrict our attention to
\begin{equation}\label{Eq: Berger second canonical metric-affine manifold}
    \left(S^{2d+1},g_\varepsilon,\nabla^{(0)}\right)\,,
    \qquad\varepsilon\ne 0\,.
\end{equation}
The connection $\nabla^{(0)}$ is the \emph{canonical connection of the second kind}. With the convention \eqref{Eq: Nomizu operator convention}, its bilinear (i.e. Nomizu multiplication) vanishes identically, i.e. 
\begin{equation}
    \Lambda^{(0)}(X,Y)=0\,,
    \qquad X,Y\in\mathfrak m\,,
\end{equation}
or, equivalently, one has 
\begin{equation}\label{Eq: Berger second canonical connection}
    \left(\nabla^{(0)}_{\tilde X}\tilde Y\right)_{\mathbf e_0}
    =
    [\tilde X,\tilde Y]_{\mathbf e_0}
    =
    -\widetilde{([X,Y]_{\mathfrak m})}_{\mathbf e_0}\,,
    \qquad X,Y\in\mathfrak m\,.
\end{equation}
Since the zero multiplication is compatible with any $\Ad\left(\SU(d)\right)$-invariant non-degenerate symmetric bilinear form on $\mathfrak m$ (cf. \eqref{Eq: lambda metric compatibility}), the connection $\nabla^{(0)}$ is compatible with any Berger metric, and therefore one has
\begin{equation}
    \left(\nabla^{(0)}\right)^\dagger=\nabla^{(0)}\,.
\end{equation}
Before turning our attention  to writing the solution to the inverse problem, we show that, for $d\geq2$, $\nabla^{(0)}$ has non vanishing torsion and curvature. The computation is based on the commutation relation
\begin{align}\label{Eq: commutation relation sphere}
    [X(a,\mathbf b),X(c,\mathbf d)]=
    X\left(-2\,\Im\langle\mathbf b,\mathbf d\rangle_{\mathbb C^d},\frac{d+1}{d}\,\mathrm i(c\,\mathbf b-a\,\mathbf d)\right)+
    \begin{pmatrix}
        0 & \mathbf 0^\ast\\
        \mathbf 0 & \mathbf d\,\mathbf b^\ast-\mathbf b\,\mathbf d^\ast-\frac{2\,\mathrm i}{d}\Im\langle\mathbf b,\mathbf d\rangle_{\mathbb C^d}\mathbb I_d
    \end{pmatrix}\,.
\end{align}
Since the torsion of $\nabla^{(0)}$ is given by
\begin{equation}
    \left(\Tor^{\nabla^{(0)}}(\tilde X,\tilde Y)\right)_{\mathbf e_0}
    =
    -\widetilde{([X,Y]_{\mathfrak m})}_{\mathbf e_0}\,,
\end{equation}
we obtain
\begin{equation}\label{Eq: torsion second canonical connection sphere}
    \left(\Tor^{\nabla^{(0)}}\left(\tilde X(a,\mathbf b),\tilde X(c,\mathbf d)\right)\right)_{\mathbf e_0}
    =
    \tilde X\left(2\,\Im\langle\mathbf b,\mathbf d\rangle_{\mathbb C^d},-\frac{d+1}{d}\,\mathrm i(c\,\mathbf b-a\,\mathbf d)\right)_{\mathbf e_0}\,,
\end{equation}
which shows that $\nabla^{(0)}$ has a non vanishing torsion.
If we consider the relation \eqref{Eq: lambda canonical curvature} for  $\lambda=0$, we find
\begin{equation}\label{Eq: Berger second canonical curvature}
    \left(R^{\nabla^{(0)}}(\tilde X,\tilde Y)\tilde Z\right)_{\mathbf e_0}
    =
    -\widetilde{([[X,Y]_{\su(d+1)_{\mathbf e_0}},Z])}_{\mathbf e_0}\,,
    \qquad X,Y,Z\in\mathfrak m\,.
\end{equation}
For $d=1$ the isotropy algebra is trivial,  and one has  $S^3\cong\SU(2)$. In this case, \eqref{Eq: Berger second canonical curvature} vanishes identically, and $\nabla^{(0)}$ coincides with the curvature-free left Cartan connection on $\SU(2)$.

For $d\geq2$, let $\mathbf e_1,\mathbf e_2\in\mathbb C^d$ be two vectors of the standard basis. Then
\begin{equation}
    [X(0,\mathbf e_1),X(0,\mathbf e_2)]_{\su(d+1)_{\mathbf e_0}}
    =
    \begin{pmatrix}
        0 & \mathbf 0^\ast\\
        \mathbf 0 & \mathbf e_2\mathbf e_1^\ast-\mathbf e_1\mathbf e_2^\ast
    \end{pmatrix}
\end{equation}
and
\begin{equation}
    \left[[X(0,\mathbf e_1),X(0,\mathbf e_2)]_{\su(d+1)_{\mathbf e_0}},X(0,\mathbf e_1)\right]=X(0,\mathbf e_2)\,.
\end{equation}
It follows from \eqref{Eq: Berger second canonical curvature} that
\begin{equation}
    \left(R^{\nabla^{(0)}}\left(\tilde X(0,\mathbf e_1),\tilde X(0,\mathbf e_2)\right)\tilde X(0,\mathbf e_1)\right)_{\mathbf e_0}
    =
    -\tilde X(0,\mathbf e_2)_{\mathbf e_0}\neq0\,.
\end{equation}
This shows that the connection $\nabla^{(0)}$ has non vanishing curvature for every $d\geq2$. Since $\nabla^{(0)}$ is $g_\varepsilon$-self-conjugate, for any  $d\geq2$ both conjugate connections have non vanishing torsion and curvature. Thus, \eqref{Eq: Berger second canonical metric-affine manifold} provides a family of reductive homogeneous metric-affine manifolds whose conjugate connections both have non vanishing torsion and curvature.

\begin{remark}
The metric-affine structure of the Berger sphere $S^{2d+1}$ is naturally related to the Riemannian geometry of the complex projective space $\CP^d$. This relationship is realized by the Hopf fibration.
The action of $\SU(d+1)$ on $S^{2d+1}$ descends to the quotient via the \emph{Hopf fibration}
\begin{equation}
    \Pi\colon S^{2d+1}\longrightarrow\CP^d=S^{2d+1}/\U(1)\,,
\end{equation}
and turns $\CP^d$ into a reductive homogeneous $\SU(d+1)$-space. The isotropy subgroup at $[\mathbf e_0]$ is
\begin{equation}
    \SU(d+1)_{[\mathbf e_0]}=\mathrm S\bigl(\U(1)\times\U(d)\bigr)\,,
\end{equation}
and for its Lie algebra one has 
\begin{equation}
    \su(d+1)_{[\mathbf e_0]}=\su(d+1)_{\mathbf e_0}\oplus\mathfrak m_{\mathrm v}\,.
\end{equation}
It then follows that the reductive decomposition associated to $\CP^d$ is
\begin{equation}
    \su(d+1)=\su(d+1)_{[\mathbf e_0]}\oplus\mathfrak m_{\mathrm h}\,.
\end{equation}
Since the Hopf map is $\SU(d+1)$-equivariant, if $\hat X$ denotes the fundamental vector field induced by $X\in\su(d+1)$ on $\CP^d$, then
\begin{equation}\label{Eq: Hopf related fundamental fields}
    \Pi'\circ \tilde X=\hat{X}\circ\Pi\,.
\end{equation}
At the base point, this gives
\begin{equation}\label{Eq: Hopf related fundamental fields examples}
    \ker\Pi'_{\mathbf e_0}=\widetilde{\left(\mathfrak m_{\mathrm v}\right)}_{\mathbf e_0}\,,\qquad \Pi'_{\mathbf e_0}\tilde X(0,\mathbf b)_{\mathbf e_0}=\hat{X}(0,\mathbf b)_{[\mathbf e_0]}\,.
\end{equation}
In order to identify the component of the Berger metrics that descends through the Hopf map, consider the $\SU(d+1)$-invariant $1$-form on $S^{2d+1}$ determined by the covector on $\mathfrak m$ given by
\begin{equation}\label{Eq: contact form}
   \eta(X(a,\mathbf b))=a\,.
\end{equation}
By \eqref{Eq: Berger metrics}, the Berger metrics decompose as
\begin{equation}\label{Eq: Berger metric Hopf decomposition}
    g_\varepsilon=\Pi^\ast g^{\FS}+\varepsilon\,\eta\otimes \eta\,,%=g_{\varepsilon_\kappa}+(\varepsilon-\varepsilon_\kappa)\,\eta\otimes \eta
\end{equation}
where the Fubini-Study metric is normalized by
\begin{equation}
    g^{\FS}_{[\mathbf e_0]}\left(\hat{X}(0,\mathbf b)_{[\mathbf e_0]},\hat{X}(0,\mathbf d)_{[\mathbf e_0]}\right)=\Re\left\langle\mathbf b,\mathbf d\right\rangle_{\mathbb C^d}\,.
\end{equation}
In particular, the parameter $\varepsilon$ only modifies the metric along the fibres of $\Pi$, while all Berger metrics induce the Fubini-Study metric on the quotient.

Let $\overline{\nabla}^{(0)}$ denote the canonical connection of the second kind associated to the reductive presentation of $\CP^d$. The commutation relation \eqref{Eq: commutation relation sphere} gives
\begin{equation}
    [\mathfrak m_{\mathrm h},\mathfrak m_{\mathrm h}]\subseteq\mathfrak m_{\mathrm v}\oplus\su(d+1)_{\mathbf e_0}=\su(d+1)_{[\mathbf e_0]}\,,
\end{equation}
and hence
\begin{equation}
    [\mathfrak m_{\mathrm h},\mathfrak m_{\mathrm h}]_{\mathfrak m_{\mathrm h}}=0\,.
\end{equation}
Therefore, all the connections in the family $\left\{\overline{\nabla}^{(\lambda)}\right\}_{\lambda\in\mathbb R}$ coincide. In particular, $\overline{\nabla}^{(0)}=\overline{\nabla}^{(1/2)}$, and hence $\overline{\nabla}^{(0)}$ is torsion-free. Moreover, its zero Nomizu multiplication is compatible with every $\SU(d+1)$-invariant metric, and in particular with $g^{\FS}$. Consequently, $\overline{\nabla}^{(0)}$ coincides with the Levi-Civita connection of $g^{\FS}$
\begin{equation}
    \overline{\nabla}^{(0)}=\nabla^{g^{\FS}}\,.
\end{equation}
Thus, the reductive homogeneous structure induced on the quotient by the Hopf fibration is the Riemannian structure
\begin{equation}
    \left(\CP^d,g^{\FS},\nabla^{g^{\FS}}\right)
\end{equation}
determined by the Fubini-Study metric.
\end{remark}

\subsubsection{Solution bi-form}
Let us solve the inverse problem for the metric-affine manifolds
\begin{equation}
    \left(S^{2d+1},g_\varepsilon,\nabla^{(0)}\right)\,, \qquad \varepsilon\ne0\,.
\end{equation}
Let $\varpi_\varepsilon^{(0)}$ be the solution bi-form \eqref{Eq: homogeneous lambda bi-form}. Since $\varpi^{(0)}_\varepsilon$ is invariant under the diagonal action of $\SU(d+1)$, it is enough to evaluate it on pairs of the form
$\left(\mathbf e_0,e^Z \mathbf e_0\right)\in\mathscr U_{\mathbf e_0}$, where $Z\in\mathfrak m$ and 
$\mathscr U_{\mathbf e_0}$ is the strongly $\nabla^{(0)}$-convex neighbourhood given as in the proof of \cref{Lemma: invariant strongly convex covering}. If we set $\lambda=0$ into the relation  \eqref{Eq: homogeneous lambda bi-form}, we have 
\begin{equation}\label{Eq: Berger solution biform}
    \varpi^{(0)}_\varepsilon(\tilde X\mid\tilde  Y)\left(\mathbf e_0,e^Z \mathbf e_0\right)=\left\langle X,\left(e^{-Z}\,Y\,e^{Z}\right)_{\mathfrak m}\right\rangle_\varepsilon\,, \qquad X,Y,Z\in\mathfrak m\,.
\end{equation}
In order to elaborate such expression more explicitly,  consider first the case $\varepsilon_\kappa=\frac{d+1}{2d}$,  for which the Berger metric is proportional to the restriction of the Cartan-Killing form:
\begin{equation}\label{Eq: Berger Killing metric}
    \left\langle X(a,\mathbf b),X(c,\mathbf d)\right\rangle_{\varepsilon_\kappa}=\Re\left\langle\mathbf b,\mathbf d\right\rangle_{\mathbb C^d}+\varepsilon_\kappa\,a\,c=-\frac{1}{4(d+1)}\,\kappa\left(X(a,\mathbf b),X(c,\mathbf d)\right)\,.
\end{equation}
Since $\mathfrak m$ is the $\kappa$-orthogonal complement of the isotropy Lie algebra $\su(d+1)_{\mathbf e_0}$, the $\mathfrak m$-projection in \eqref{Eq: Berger solution biform} may be omitted when paired to an element of $\mathfrak m$. Therefore, we have
\begin{align}
    \varpi^{(0)}_{\varepsilon_\kappa}\left(X(a,\mathbf b)^+\mid X(c,\mathbf d)^+\right)\left(\mathbf e_0,e^Z \mathbf e_0\right)&=-\frac{1}{4(d+1)}\,\kappa\left(X(a,\mathbf b),e^{-Z}\,X(c,\mathbf d)\,e^Z\right)\\
    &=-\frac12\,\Tr\left(X(a,\mathbf b)e^{-Z}X(c,\mathbf d)e^Z\right)\,.
\end{align}
For a general Berger metric, we use the decomposition
\begin{equation}\label{Eq: Berger metric decomposition}  \left\langle\cdot,\cdot\right\rangle_\varepsilon=\left\langle\cdot,\cdot\right\rangle_{\varepsilon_\kappa}+\left(\varepsilon-\varepsilon_\kappa\right)\left\langle\cdot,\cdot\right\rangle_{\mathrm v}\,,
\end{equation}
where
$\left\langle X(a,\mathbf b),X(c,\mathbf d)\right\rangle_{\mathrm v}=a\,c\,$.
Write
\begin{equation}
    \left(e^{-Z}\,X(c,\mathbf d)\,e^Z\right)_{\mathfrak m}=X(c_Z,\mathbf d_Z)\,:
\end{equation}
the equations \eqref{Eq: Berger solution biform} and \eqref{Eq: Berger metric decomposition} correspondingly give
\begin{align}
    \varpi^{(0)}_{\varepsilon}\left(\tilde X(a,\mathbf b)\mid \tilde X(c,\mathbf d)\right)\left(\mathbf e_0,e^Z \mathbf e_0\right)
    =-\frac12\,\Tr\left(X(a,\mathbf b)\,e^{-Z}\,X(c,\mathbf d)\,e^Z\right)+\left(\varepsilon-\varepsilon_\kappa\right)\,a\,c_Z\,.
\end{align}
All elements in $\su(d+1)_{\mathbf e_0}$ have vanishing $(0,0)$-entry, and therefore the $\mathfrak m$-projection does not change this entry. Since the $(0,0)$-entry of $X(c_Z,\mathbf d_Z)$ is the term $\mathrm i\, c_Z$, we have
\begin{equation}
    c_Z=-\mathrm i\left(e^{-Z}X(c,\mathbf d)e^Z\right)_{00}\,.
\end{equation}
We thus obtain
\begin{align}\label{Eq: Berger solution biform explicit}
    \varpi^{(0)}_{\varepsilon}\left(\tilde X(a,\mathbf b)\mid \tilde X(c,\mathbf d)\right)\left(\mathbf e_0,e^Z \mathbf e_0\right)=-\frac12\,\Tr\left(X(a,\mathbf b)e^{-Z}X(c,\mathbf d)e^Z\right)-\mathrm i\left(\varepsilon-\frac{d+1}{2d}\right)a\left(e^{-Z}X(c,\mathbf d)e^Z\right)_{00}\,.
\end{align}
%$\varpi_{\mathrm v}$ is the non-contrast bi-form determined by\begin{equation}\label{Eq: Berger vertical biform}\varpi_{\mathrm v}\left(X(a,\mathbf b)^+\mid X(c,\mathbf d)^+\right)\left(\mathbf e_0,e^Z\mathbf e_0\right)=-\mathrm i\,a\left(e^{-Z}X(c,\mathbf d)e^Z\right)_{00}\,,\end{equation}and extended by diagonal $\SU(d+1)$-invariance.
The reduction of the Berger metric-affine manifold \eqref{Eq: Berger second canonical metric-affine manifold} to the Riemannian manifold determined by the Fubini-Study metric through the Hopf fibration can be recovered directly at the level of the corresponding solution bi-forms. Indeed, the solution bi-form $\varpi^{(0)}_\varepsilon$ decomposes as
\begin{equation}\label{Eq: Berger solution biform decomposition}
    \varpi^{(0)}_\varepsilon=\varpi^{(0)}_{\varepsilon_\kappa}+\left(\varepsilon-\varepsilon_\kappa\right)\eta\boxtimes\eta\,.
\end{equation}
Since the horizontal distribution induced by $\mathfrak m_{\mathrm h}$ coincides with $\ker\eta$, this decomposition shows that the restriction of $\varpi^{(0)}_\varepsilon$ to the horizontal distribution is independent of $\varepsilon$.

Let $Z\in\mathfrak m_{\mathrm h}$ be such that $(\mathbf e_0,e^Z\,\mathbf e_0)\in\mathscr U_{\mathbf e_0}$, and note that the $\nabla^{(0)}$-geodesic $\gamma_Z(t)=e^{t\,Z}\,\mathbf e_0$ projects onto the $\overline{\nabla}^{(0)}$-geodesic $\overline{\gamma}_Z(t)=e^{t\,Z}\,[\mathbf e_0]$. Using this projection property, together with \eqref{Eq: Hopf related fundamental fields examples} and the relation between the Berger and Fubini-Study metrics, we obtain
\begin{equation}\label{Eq: Hopf pullback solution biform}
    \varpi_{\varepsilon}^{(0)}\left(\tilde X(0,\mathbf b)\mid \tilde X(0,\mathbf d)\right)\left(\mathbf e_0,e^Z\,\mathbf e_0\right)=\left(\Pi^\ast\overline{\varpi}^{(0)}\right)\left(\tilde X(0,\mathbf b)\mid \tilde X(0,\mathbf d)\right)\left(\mathbf e_0,e^Z\,\mathbf e_0\right)\,.
\end{equation}
Here $\overline{\varpi}^{(0)}$ denotes the solution bi-form associated to $\left(\CP^d,g^{\FS},\overline{\nabla}^{(0)}\right)$. Its germ along the diagonal is determined by
\begin{equation}\label{Eq: projective solution biform}
    \overline{\varpi}^{(0)}\left(\hat{X}(0,\mathbf b)\mid\hat{X}(0,\mathbf d)\right)\left([\mathbf e_0],e^Z[\mathbf e_0]\right)=-\frac12\,\Tr\left(X(0,\mathbf b)\,e^{-Z}\,X(0,\mathbf d)\,e^Z\right)\,,
\end{equation}
whenever $\left([\mathbf e_0],e^Z[\mathbf e_0]\right)$ belongs to a strongly $\overline{\nabla}^{(0)}$-convex neighbourhood of the diagonal. Thus, the horizontal component of $\varpi^{(0)}_\varepsilon$ is the pullback of the projective solution bi-form, whereas the term involving $\eta\boxtimes\eta$ provides information on the vertical component of the torsion corresponding to the  Berger metric-affine structure.

\section{Conclusion}

The constructions developed in this paper provide a constructive solution to the inverse problem in Information Geometry. The problem is first solved at the metric-affine level, without imposing any curvature or torsion constraints, by integrating the given affine connection into a parallelism and pairing it to the metric. Statistical homotopy operators then recover pre-contrast and contrast functions whenever the given structure is a SMAT or a statistical manifold. The construction also makes explicit the dependence of the resulting potentials on the auxiliary affine connections. Suitable choices of these connections recover the pre-contrast function of Henmi and Matsuzoe and the contrast functions of Ay and Amari and of Henmi and Kobayashi. The construction is made explicit for reductive homogeneous spaces endowed with invariant metrics and invariant affine connections in the sense of Nomizu, including semisimple Lie groups and odd-dimensional spheres endowed with Berger metrics. 

We conclude with some observations on a possible extension of the framework and constructions developed in this paper to Lie groupoids. This is in line with the approach proposed in \cite{G-G-K-M-2019,G-G-K-M-2020}, where Lie groupoids are regarded as a natural setting for Information Geometry. In this formulation, the statistical structure associated to a Lie groupoid $\mathcal G\rightrightarrows M$ is defined on its Lie algebroid $A(\mathcal G)$. The metric is replaced by a fiber metric $g_A$ on $A(\mathcal G)$, while the affine connections are replaced by a pair of conjugate torsion-free $A(\mathcal G)$-connections $\nabla$ and $\nabla^\dagger$. Their compatibility is expressed in terms of the anchor $\rho\colon A(\mathcal G)\to\T M$ by
\begin{equation}
\mathcal L_{\rho(X)}\left(g_A(Y,Z)\right)=g_A\left(\nabla_XY,Z\right)+g_A\left(Y,\nabla^\dagger_XZ\right)\,,
\end{equation}
for all sections $X,Y,Z$ of $A(\mathcal G)$. When $\mathcal G$ is the pair groupoid of $M$, its Lie algebroid is canonically identified with $\T M$ and its anchor with the identity, so that the usual notion of a statistical manifold is recovered.

The construction of statistical structures on a Lie groupoid is formulated by differentiating contrast functions $F\colon\mathcal G\to\mathbb R$ along the left- and right-invariant lifts of sections of $A(\mathcal G)$. This suggests defining $(1,1)$-bi-forms on $\mathcal G$ as local sections, near the unit submanifold, of
\begin{equation}
t^\ast A(\mathcal G)^\ast\otimes_{\mathcal G}s^\ast A(\mathcal G)^\ast\longrightarrow\mathcal G\,.
\end{equation}
For the pair groupoid, the local sections of this bundle are precisely the elements of $\Omega^{1,1}_{\Delta_M}(M\mid M)$ considered in \cref{Sec: two-point formalism in IG}.

The parallelisms used in this paper admit a natural groupoid interpretation. A parallelism $P$ determines a smooth local map
\begin{equation}
\mathsf P\colon\operatorname{Pair}(M)\dashrightarrow\operatorname{Iso}(\T M)
\end{equation}
characterized by
\begin{equation}
\mathsf P(n,m)=P_{(m,n)}\colon\T_mM\longrightarrow\T_nM\,,
\end{equation}
and defined on a neighbourhood of the units. This suggests replacing the pair groupoid and its tangent bundle with a general Lie groupoid $\mathcal G\rightrightarrows M$ and its Lie algebroid $A(\mathcal G)$. The analogous object would then be a smooth local map
\begin{equation}
\mathsf P_{\mathcal G}\colon\mathcal G\dashrightarrow\operatorname{Iso}\left(A(\mathcal G)\right)
\end{equation}
preserving sources, targets and units.

The Lie group example treated in \cref{Sec: examples} suggests a direct instance of this construction. Any Lie group $G$ can be regarded as a Lie groupoid over a point, whose Lie algebroid is its Lie algebra $\mathfrak g$. A parallelism on this groupoid is then a smooth map
\begin{equation}
\mathsf P_G\colon G\dashrightarrow\operatorname{Iso}(\mathfrak g)
\end{equation}
defined on a neighbourhood of the identity and satisfying
\begin{equation}
\mathsf P_G(e)=\id_{\mathfrak g}\,.
\end{equation}
The bi-forms considered in \eqref{Eq: CS solution biform} provide a family of such parallelisms:
\begin{equation}
\mathsf P_G^{(\lambda)}(z)=e^{-\lambda\,\ad_{\log_G(z)}}\,.
\end{equation}
\section*{Acknowledgments}
This work has been supported by the Madrid Government through the project TEC-2024/COM-84 QUITEMADCM, by the Agencia Estatal de Investigación through the project PID2024-160539NB-I00, and by COST (European Cooperation in Science and Technology) through the COST Action CaLISTA CA21109.

%We acknowledge financial support from Next Generation EU through the project 2022XZSAFN – PRIN2022 CUP: E53D23005970006.

We thank our Institutions, the Universidad Carlos III de Madrid, the Università Federico II di Napoli, the Scuola Superiore Meridionale di Napoli, as well as Indam
and INFN, through Gnsaga and
the initiative GeoSymQFT and Quantum, for the financial support of the research visits we had during the period this work has been developed.

\addcontentsline{toc}{section}{References}
\bibliographystyle{plainurl}
\bibliography{scientific_bibliography_uniform}

\end{document}